\documentclass[hidelinks,reqno]{amsart-mf2}
\usepackage[a4paper,margin=2cm]{geometry}

\usepackage[defaultlines=2,all]{nowidow} 

\usepackage{ mathtools, amssymb, stmaryrd, mathdots, bbm, mleftright, faktor }
\SetSymbolFont{stmry}{bold}{U}{stmry}{m}{n} 
\usepackage{ blkarray }     

\usepackage{ letltxmacro }    
\usepackage{ xparse }
\usepackage{ stackengine, graphicx, scalerel, etoolbox }
\usepackage{xpatch} 

\usepackage[table,xcdraw,rgb]{xcolor}
\definecolor{darkazure}{HTML}{0059b3}
\definecolor{azure}{HTML}{007fff}
\definecolor{paleazure}{HTML}{a6d2ff}
\definecolor{vdarkgold}{HTML}{807100}
\definecolor{darkergold}{HTML}{b39e00}
\definecolor{darkgold}{HTML}{f2d400}
\definecolor{gold}{HTML}{ffdf00}
\definecolor{palegold}{HTML}{fff7bf}
\definecolor{beaverred}{HTML}{cc0000}
\definecolor{myred}{HTML}{f03f02}
\definecolor{palered}{HTML}{f68c67}
\definecolor{mygrey}{HTML}{e6e6e6}
\definecolor{darkgrey}{HTML}{bbbbbb}
\newcommand{\colourA}{azure}

\newcommand{\colourB}{darkergold}

\newcommand{\colourC}{beaverred}

\usepackage{ tikz, tikz-cd }
\usetikzlibrary{shapes.misc, calc, decorations.pathreplacing, patterns, arrows.meta, positioning}
\usepackage{mfabacus_edited}
\tikzset{
ablhs/.style={inner sep=0, anchor=east, font=\tiny},
abrhs/.style={inner sep=0, anchor=west, font=\tiny},
abbelow/.style={inner sep=0, anchor=center},
}
\usepackage{genyoungtabtikz}

\usepackage{ standalone }
\usepackage[section]{ placeins } 

\counterwithin{figure}{section}
\counterwithin{table}{section}

\usepackage[labelformat=simple]{ subcaption }

\usepackage[margin=0cm]{ caption }

\makeatletter
\def\subsection{\@startsection{subsection}{2}%
  \z@{.5\linespacing\@plus.7\linespacing}{.3\linespacing}%
  {\normalfont\bfseries}}
\makeatother

\makeatletter
\def\subsubsection{\@startsection{subsubsection}{3}%
  \z@{.8\linespacing\@plus.2\linespacing}{-1ex}%
  {\normalfont\itshape}}%
\makeatother

\usepackage[shortlabels]{ enumitem }
\makeatletter    
\newcommand{\prelistcommand}{\nobreak\leavevmode\@nobreaktrue}
\makeatother
\SetEnumitemKey{beginthm}{before=\prelistcommand}   
\SetEnumitemKey{smallin}{leftmargin=2.7em, labelindent=*}   
\setlist{smallin, topsep=1pt}

\setenumerate{label=(\roman*)}

\newenvironment{proofenum}{\begin{proof}\begin{enumerate}[beginthm]}{\qedhere\end{enumerate}\end{proof}}

\usepackage{ amsthm, thmtools, url }
\usepackage[pagebackref,hidelinks,linktoc=all]{hyperref}
\renewcommand*{\backrefalt}[4]{%
\ifcase #1 %
[No citations.]%
\or
[Cited on page~#2.]%
\else
[Cited on pages #2.]%
\fi
}
\usepackage[nameinlink, capitalize]{ cleveref }

\crefname{definition}{Def.}{Defs.}
\Crefname{definition}{Definition}{Definitions}

\numberwithin{equation}{section}
\declaretheorem[style=plain,numberlike=equation]{theorem}
\declaretheorem[style=plain,numberlike=theorem]{lemma}
\declaretheorem[style=plain,numberlike=theorem]{proposition}

\declaretheorem[style=remark,numberlike=theorem]{remark}

\declaretheorem[style=definition,numberlike=theorem]{definition}

\newtheorem{cor}[lemma]{Corollary}

\newcounter{equationstore}
\newcounter{subequationstore}

\AtBeginEnvironment{proof}{
    \setcounter{subequationstore}{\value{equationstore}}
    \setcounter{equationstore}{\value{equation}}
    \setcounter{equation}{0}
    \renewcommand{\theequation}{\arabic{section}.\arabic{equationstore}.\arabic{equation}}
}
\AtEndEnvironment{proof}{
    \setcounter{equation}{\value{equationstore}}
    \setcounter{equationstore}{\value{subequationstore}}
}

\crefformat{section}{\S#2#1#3}
\crefrangeformat{section}{\S#3#1#4--#5#2#6}
\crefmultiformat{section}{\S#2#1#3}{ and~\S#2#1#3}{, \S#2#1#3}{, and~\S#2#1#3}
\crefrangemultiformat{section}{\S#2#1#3}{ and~\S#2#1#3}{, \S#2#1#3}{, and~\S#2#1#3}

\crefformat{equation}{(#2#1#3)}
\crefrangeformat{equation}{(#3#1#4)--(#5#2#6)}
\crefmultiformat{equation}{(#2#1#3)}{ and~(#2#1#3)}{, (#2#1#3)}{, and~(#2#1#3)}
\crefrangemultiformat{equation}{(#2#1#3)}{ and~(#2#1#3)}{, (#2#1#3)}{, and~(#2#1#3)}

\crefname{enumi}{}{}

\NewDocumentCommand\set{s m}{%
    \IfBooleanTF#1%
    {\left\{ #2 \right\}}%
    {\{#2\}}%
}
\NewDocumentCommand\setbuild{s m m}{%
    \IfBooleanTF#1%
    {\ensuremath{\left\{\, #2 \, \middle| \, #3 \,\right\}}}%
    {\ensuremath{\{\, #2 \, \mid \, #3 \,\}}}%
}

\newcommand{\Z}{\mathbb{Z}}
\newcommand\bbz{\Z}
\newcommand{\Q}{\mathbb{Q}}
\newcommand{\R}{\mathbb{R}}

\newcommand{\N}{\mathbb{N}}

\newcommand{\al}{\alpha}
\newcommand{\be}{\beta}
\newcommand{\ze}{\zeta}
\newcommand{\si}{\sigma}
\newcommand{\ga}{\gamma}
\newcommand{\de}{\delta}

\newcommand{\ka}{\kappa}
\newcommand{\ep}{\eps}
\newcommand{\eps}{\epsilon}
\newcommand{\la}{\lambda}

\renewcommand{\epsilon}{\varepsilon}
\renewcommand{\phi}{\varphi}
\renewcommand{\emptyset}{\varnothing}

\usepackage{letltxmacro}
\makeatletter
\let\oldr@@t\r@@t
\def\r@@t#1#2{%
\setbox0=\hbox{$\oldr@@t#1{#2\,}$}\dimen0=\ht0
\advance\dimen0-0.2\ht0
\setbox2=\hbox{\vrule height\ht0 depth -\dimen0}%
{\box0\lower0.4pt\box2}}
\LetLtxMacro{\oldsqrt}{\sqrt}
\renewcommand*{\sqrt}[2][\ ]{\oldsqrt[#1]{#2}}
\makeatother

\makeatletter
\let\save@mathaccent\mathaccent
\newcommand*\if@single[3]{%
  \setbox0\hbox{${\mathaccent"0362{#1}}^H$}%
  \setbox2\hbox{${\mathaccent"0362{\kern0pt#1}}^H$}%
  \ifdim\ht0=\ht2 #3\else #2\fi
  }
\newcommand*\rel@kern[1]{\kern#1\dimexpr\macc@kerna}
\DeclareRobustCommand*\widebar[1]{\@ifnextchar^{{\wide@bar{#1}{0}}}{\wide@bar{#1}{1}}}
\newcommand*\wide@bar[2]{\if@single{#1}{\wide@bar@{#1}{#2}{1}}{\wide@bar@{#1}{#2}{2}}}
\newcommand*\wide@bar@[3]{%
  \begingroup
  \def\mathaccent##1##2{%
    \let\mathaccent\save@mathaccent
    \if#32 \let\macc@nucleus\first@char \fi
    \setbox\z@\hbox{$\macc@style{\macc@nucleus}_{}$}%
    \setbox\tw@\hbox{$\macc@style{\macc@nucleus}{}_{}$}%
    \dimen@\wd\tw@
    \advance\dimen@-\wd\z@
    \divide\dimen@ 3
    \@tempdima\wd\tw@
    \advance\@tempdima-\scriptspace
    \divide\@tempdima 10
    \advance\dimen@-\@tempdima
    \ifdim\dimen@>\z@ \dimen@0pt\fi
    \rel@kern{0.6}\kern-\dimen@
    \if#31
      \overline{\rel@kern{-0.6}\kern\dimen@\macc@nucleus\rel@kern{0.4}\kern\dimen@}%
      \advance\dimen@0.4\dimexpr\macc@kerna
      \let\final@kern#2%
      \ifdim\dimen@<\z@ \let\final@kern1\fi
      \if\final@kern1 \kern-\dimen@\fi
    \else
      \overline{\rel@kern{-0.6}\kern\dimen@#1}%
    \fi
  }%
  \macc@depth\@ne
  \let\math@bgroup\@empty \let\math@egroup\macc@set@skewchar
  \mathsurround\z@ \frozen@everymath{\mathgroup\macc@group\relax}%
  \macc@set@skewchar\relax
  \if#31
    \macc@nested@a\relax111{#1}%
  \else
    \def\gobble@till@marker##1\endmarker{}%
    \futurelet\first@char\gobble@till@marker#1\endmarker
    \ifcat\noexpand\first@char A\else
      \def\first@char{}%
    \fi
    \macc@nested@a\relax111{\first@char}%
  \fi
  \endgroup
}
\makeatother

\newcommand\ppmod[1]{\ (\operatorname{mod}\,#1)}
\newcommand{\ls}{\DOTSB\leqslant}
\newcommand{\gs}{\DOTSB\geqslant}
\renewcommand{\leq}{\DOTSB\leqslant}
\renewcommand{\geq}{\DOTSB\geqslant}
\newcommand\ip[2]{\left(#1:#2\right)} 

\newcommand\abd{abacus display\xspace}
\newcommand\abds{\abd{}s\xspace}
\newcommand\Abd{Abacus display\xspace}
\newcommand\Abds{\Abd{}s\xspace}
\newcommand{\alp}[1]{\al^{(#1)}}
\newcommand{\bep}[1]{\be^{(#1)}}

\newcommand\tcp{$2$-core\xspace}
\newcommand\tcps{\tcp{}s\xspace}

\renewcommand\iff{if and only if\xspace}

\newcommand\rb[2]{S(#1,#2)} 

\newcommand{\hatal}{\hat{\al}}

\newcommand\sspn[1]{\lan#1\ran_\ast}
\newcommand\aspn[1]{[#1]_\ast}

\newcommand\res[1]{\!\downarrow_{#1}}

\newcommand{\twoc}[1]{\ka_{#1}}
\newcommand{\threecore}[2]{\ga_{#1,#2}}
\newcommand{\thrc}[2]{\threecore{#1}{#2}}

\newcommand{\corandquot}[3]{[#1; (#2,#3)]}

\newcommand{\tbc}{\(3\)-bar-core\xspace}
\newcommand{\tbcs}{\tbc{}s\xspace}
\newcommand{\pbc}{\(p\)-bar-core\xspace}

\newcommand{\tbq}{\(3\)-bar-quotient\xspace}
\newcommand{\tbqs}{\(3\)-bar-quotient{}s\xspace}
\newcommand{\tbw}{$3$-bar-weight\xspace}
\newcommand{\pbw}{$p$-bar-weight\xspace}

\newcommand\stp{strict partition\xspace}
\newcommand\stps{\stp{}s\xspace}
\newcommand\spr{residue\xspace}
\newcommand\sprs{\spr{}s\xspace}
\newcommand\nd[1]{$#1$-node\xspace}
\newcommand\nds[1]{\nd#1{}s\xspace}
\newcommand\spre{removable\xspace}
\newcommand\sprm{\spre node\xspace}
\newcommand\sprms{\sprm{}s\xspace}
\newcommand\spad{addable\xspace}
\newcommand\spam{\spad node\xspace}
\newcommand\spams{\spam{}s\xspace}
\newcommand\esprm[1]{$#1$-removable node\xspace}
\newcommand\esprms[1]{$#1$-removable nodes\xspace}
\newcommand\espam[1]{$#1$-addable node\xspace}
\newcommand\espams[1]{$#1$-addable nodes\xspace}

\newcommand\delt[1]{\de(#1)}

\newcommand{\ydsm}{/} 
\newcommand{\rowrem}[1]{\mathrm{R}#1}
\newcommand{\colrem}[1]{\mathrm{C}#1}

\usepackage{xspace}
\newcommand{\runnerswap}[2]{S_{#1}^{(#2)}} 
\newcommand\rsf{runner-swapping function\xspace}

\newcommand{\lan}{\langle}
\newcommand{\ran}{\rangle}
\makeatletter
\newsavebox{\@brx}
\newcommand{\llangle}[1][]{\savebox{\@brx}{\(\m@th{#1\langle}\)}%
  \mathopen{\copy\@brx\kern-0.7\wd\@brx\usebox{\@brx}}}
\newcommand{\rrangle}[1][]{\savebox{\@brx}{\(\m@th{#1\rangle}\)}%
  \mathclose{\copy\@brx\kern-0.7\wd\@brx\usebox{\@brx}}}
\makeatother

\newcommand\sss{\mathfrak{S}_}
\newcommand\aaa{\mathfrak{A}_}
\newcommand\hsss{\hat{\mathfrak{S}}_}
\newcommand\haaa{\hat{\mathfrak{A}}_}
\newcommand\spnorig[1]{\lan#1\ran} 
\newcommand\spn[1]{\llangle#1\rrangle} 

\newcommand\Aspnsum[1]{\llbracket#1\rrbracket}
\newcommand\apn[1]{\Aspnsum{#1}}
\newcommand{\ass}[1]{#1^{\mathsf{a}}}
\newcommand{\conj}[1]{#1^{\mathsf{c}}}

\newcommand\br[1]{\widebar{#1}} 
\newcommand\bspn[1]{\br{\spn{#1}}}

\newcommand{\plus}{{+}}
\newcommand{\minus}{{-}}

\usepackage{mathrsfs}
\newcommand\scrd{\mathscr{D}}
\newcommand\scrp{\mathscr{P}}

\newcommand{\len}[1]{\operatorname{len}(#1)}
\newcommand\cm[2]{\left[#1:#2\right]}

\newcommand{\spindecoration}[1]{#1}
\newcommand{\netspinaddables}[2]{\spindecoration{\Delta}_{#1}#2} 
\newcommand\bswp[2]{#1^{\ast#2}} 

\usepackage{amsaddr}

\newcommand{\initialsspace}{0.1em}
 
\makeatletter
\newcommand{\splitlist}[1]{\@splitlist#1\@nil}
\def\@splitlist#1\@nil{%
  \if\relax\detokenize{#1}\relax
    \expandafter\@gobble
  \else
    \expandafter\@firstofone
  \fi
  {\@spl@tlist#1.\@nil}%
}

\def\@spl@tlist#1.#2\@nil{%
    \def\tmpA{#1}%
    \def\tmpB{#2}%
    \def\tmpP{.}%
    \ifx\tmpB\tmpP%
        #1.%
    \else{%
        \ifx\tmpA\@empty%
        \else%
                #1.\nobreak\hspace{\initialsspace}%
        \fi%
    }%
    \fi%
  \if\relax\detokenize{#2}\relax
    \expandafter\@firstoftwo
  \else
    \expandafter\@secondoftwo
  \fi
  {\unskip}%
  {\@spl@tlist#2\@nil}%
}
\makeatother

\title[Proportional spin characters in characteristic 3]
{Spin characters of symmetric and alternating groups which are proportional in characteristic~3}

\author{Matthew Fayers}
\address{\upshape \vspace{-0.2cm}%
Queen Mary University of London \endgraf
\texttt{m.fayers@qmul.ac.uk} \endgraf
}

\author{Eoghan McDowell}
\address{\upshape \vspace{-0.2cm}%
University of Bristol \endgraf
\texttt{eoghan.mcdowell@bristol.ac.uk} \endgraf
}

\usepackage{needspace}
\usepackage{changepage} 

\begin{document}

\begin{abstract}
Let $G$ be a finite group and $p$ a prime. It is interesting to determine when two ordinary irreducible representations of $G$ have the same $p$-modular reduction; this is the same as saying that the corresponding rows of the decomposition matrix are equal, or that the characters of the two representations agree on $p$-regular conjugacy classes. In fact we consider the more general problem of asking when two rows of the decomposition matrix are proportional.

In the case where $G$ is a double cover of the alternating or symmetric group, this problem has been solved except when $p=3$. 
Here we resolve the missing case for spin characters (i.e.~characters which are not lifted from the covered group), which completely solves the problem for the double cover of the symmetric group. There are surprising parallels to our solution to the corresponding problem for $p=2$.
\end{abstract}

\maketitle

\begin{adjustwidth}{70pt}{70pt}
    \tableofcontents
\end{adjustwidth}

\section{Introduction}
\thispagestyle{empty}

Suppose \(G\) is a finite group and \(p\) a prime.
One route to understanding the \(p\)-modular representation theory of \(G\) is through \emph{\(p\)-modular reduction}, in which a representation over a field of characteristic \(0\) is converted to one over a field of characteristic \(p\).
This process is well-defined at the level of composition factors.
An interesting question is to determine when two different irreducible representations have the same \(p\)-modular reduction.
This is the same as asking when the corresponding rows of the \emph{decomposition matrix} of \(G\) are equal, or when their characters have the same restrictions to the \(p\)-regular conjugacy classes of \(G\) (i.e.~they afford the same \emph{Brauer characters}).
More generally, one can ask when these rows/characters are proportional.


This paper addresses this problem for the double covers \(\hsss n\) and \(\haaa n\) of the symmetric and alternating groups.
Irreducible representations of these groups come in two types:
those that are lifted from representations of $\sss n$ or \(\aaa n\) (i.e.\ those on which the central involution $z\in\haaa n$ acts as $1$),
and the \emph{spin representations} (on which $z$ acts as $-1$).

We summarise what is known about proportionality of the corresponding Brauer characters.

\begin{itemize}
\item
The $p$-modular reductions of two non-spin characters are proportional only if they are equal. The cases where this happens for $\hsss n$, and for \(\haaa n\) when \(p=2\), were classified by Wildon \cite{wildon2008distinctrows}, and for $\haaa n$ when \(p \geq 5\) by the second author \cite{mcdowell2024charsonlprimeclasses} (the case \(p=3\) is ongoing work).
\item
A spin representation and a non-spin representation can have proportional $p$-modular reductions only if $p=2$ (for odd $p$ they lie in different $p$-blocks). The cases where this happens for $\hsss n$ were classified by the authors in \cite{fayersmcd2025spintospecht} and for $\haaa n$ by the second author in \cite{mcdowell2026spintolinearAn}.
\item
When $p\neq3$, the $p$-modular reductions of two spin characters are proportional only if they are equal, and the cases where this happens were classified by the second author in \cite{mcdowell2024charsonlprimeclasses}. In this paper we consider the exceptional case $p=3$ (where proportionality does not imply equality), and classify the pairs of spin characters with proportional $p$-modular reductions.
\end{itemize}


Our main theorems are as follows. Here $(\alp0,\alp1)$ denotes the \tbq of $\al$, defined in \cref{subsec:abacus}.
We write $\ep(\al)=1$ if $\al$ is odd (i.e.\ has an odd number of positive even parts), and $\ep(\al)=0$ otherwise.
We write $\bar\chi$ for the $3$-modular reduction of a character $\chi$.

\begin{theorem}
\label{thm:mainS}
Let \(\chi\) and \(\psi\) be distinct irreducible spin characters of \(\hsss n\), labelled by \stps \(\al\) and \(\be\) respectively.
Then $\bar\chi$ is proportional to $\bar\psi$ \iff both of the following hold:
\begin{enumerate}
    \item either $\al=\be$, or there exist \tcps \(\rho\), \(\sigma\) and \(\tau\) such that
\begin{align*}
\alp0 &= \rho + \sigma, & \bep0 &= \rho + \tau, \\
\alp1 &= \tau,          &     \bep1 &= \sigma;
\end{align*}
    \item whichever of \(\al\) and \(\be\) are odd
    (i.e.~have an odd number of even parts)
    have at least one part divisible by~\(3\).
\end{enumerate}
In this case $\bar\chi=2^{(\len{\rho+\si}-\len{\rho+\tau}-\ep(\al)+\ep(\be))/2}\bar\psi$.
\end{theorem}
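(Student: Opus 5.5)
We need to prove both directions and also determine the constant $2^{(\cdots)/2}$. The plan is to work throughout with the spin characters $\spn\al$ of $\hsss n$ and their $3$-modular reductions. We express these through the projective analogue of the Brauer character calculus: values on $3$-regular classes, which for spin characters means classes of cycle type with all parts odd and not divisible by $3$. We also need the distinguished classes where an odd strict partition's character pair can differ (parts all distinct and odd). The main tools are the Morris--Olsson description of $3$-bar-cores and $3$-bar-quotients, and the Morris recursion (the spin Murnaghan--Nakayama rule), used in its $3$-modular form to remove $3$-bars.

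\textbf{Reduction to the principal-type structure.} Since proportional reductions lie in the same $3$-block, $\al$ and $\be$ share a $3$-bar-core, so they have the same bar-weight. We then use the known description of the $3$-modular reduction in terms of the bar-quotient. Concretely, the strategy is to show that $\bar\chi_\al$ restricted to $3$-regular classes factorises as a power of $\sqrt2$ (coming from the parity $\ep(\al)$ and the number of parts) times a product depending on the quotient $(\alp0,\alp1)$. The factors are:
\begin{itemize}
\item on runner $0$ (parts divisible by $3$), a Brauer-type contribution of $\alp0$ through its $2$-quotient/characters of the symmetric group in characteristic $2$-like behaviour;
\item on runner $1$, an ordinary Specht-type contribution of $\alp1$.
\end{itemize}
The key lemma I would aim for is a formula of the shape
\[\bar{\spn\al}=2^{-(\len{\alp0}+\ep(\al))/2}\,\Phi(\alp0,\alp1),\]
where $\Phi$ depends only on a decomposition of $\alp0$ and $\alp1$ into $2$-core-type pieces. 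This would be proved by induction on $n$ via a branching argument that strips addable/removable nodes of a fixed residue and compares with the corresponding operation on quotients.

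\textbf{Sufficiency.} Given $\rho,\si,\tau$ with $\alp0=\rho+\si$, $\alp1=\tau$, $\bep0=\rho+\tau$, $\bep1=\si$, I would show that $\Phi$ is symmetric under swapping a $2$-core summand between runner $0$ and runner $1$. The heuristic is that $2$-cores have Brauer reductions in characteristic $2$ equal to ordinary-like irreducibles, which is exactly the "surprising parallel" with $p=2$. The constant then follows by comparing the length/parity prefactors, giving $2^{(\len{\rho+\si}-\len{\rho+\tau}-\ep(\al)+\ep(\be))/2}$. Condition (ii) enters when $\al$ is odd. In that case $\spn\al$ is one of an associate pair $\spn\al_\pm$, differing only on classes of type $\al$ itself. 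These classes are $3$-regular precisely when $\al$ has no part divisible by $3$, so the hypothesis guarantees that the associates have equal reductions. Each reduction is then half of the reduction of the sum, and the proportionality persists.

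\textbf{Necessity.} Conversely, if $\al$ is odd with no part divisible by $3$, then $\spn\al_+$ and $\spn\al_-$ differ on a $3$-regular class, which breaks proportionality with any $\psi$ not distinguished the same way. That $\psi$ is real on those classes unless $\be=\al$, which is excluded for distinct characters; the same argument gives the case $\al=\be$ in (i). For (i), I would use the decomposition matrix's unitriangularity, with respect to a suitable dominance-type order on bar-quotients, together with the factorised formula. From these, proportionality forces an equality of the $\Phi$'s, and I would recover $\rho,\si,\tau$ as the common and differing $2$-core parts. I expect this step, and the factorised formula on which it rests, to be the main obstacle, particularly identifying exactly which pairs of quotients give proportional $\Phi$. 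I would first test the conjectural formula on weight $\le 2$ blocks, where the decomposition matrices are explicitly known, before attempting the general inductive proof.
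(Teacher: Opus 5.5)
\textbf{There is a genuine gap: neither the formula your plan rests on nor the ``only if'' step is actually available.}

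\textbf{The factorised formula.} Your plan rests on a ``known'' formula $\br{\spn\al}=2^{-(\len{\alp0}+\ep(\al))/2}\Phi(\alp0,\alp1)$ valid in every block. No such description exists. An induction that ``strips nodes of a fixed residue'' is not an argument for it. As stated, $\Phi$ is only meaningful when the quotient splits into $2$-core pieces, which is rarely the case.

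What is known, from Fayers--Kleshchev--Morotti, covers only spin RoCK blocks, and only up to an unknown invertible adjustment matrix $A$:
$\cm{\br{\spn{(\ga+3\alp1)\sqcup3\alp0}}}{\phi(\ga\sqcup3\mu)}=2^{\len{\alp0}/2}\sum_\la A_{\la\mu}\lan P_{\alp0}s_{{\alp1}'},s_\la\ran$.
There is no parity term, since $\spn\al$ already absorbs it. The length exponent is positive; with your prefactor and a swap-symmetric $\Phi$, the length term in the final constant would come out with the wrong sign.

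So, in a RoCK block, proportionality amounts to proportionality of $P_{\alp0}s_{{\alp1}'}$ and $P_{\bep0}s_{{\bep1}'}$. Sufficiency then needs two ingredients you do not supply.
\begin{enumerate}
\item The identity $s_\si s_\tau=P_{\si+\tau}$ for $2$-cores. This gives $P_{\rho+\si}s_\tau=s_\rho s_\si s_\tau=P_{\rho+\tau}s_\si$, since $2$-cores are self-conjugate. This is what your ``swap a $2$-core summand between runners'' has to mean. The paper proves it by comparing characteristic-$2$ RoCK decomposition numbers with the authors' earlier characteristic-$2$ classification of spin characters proportional to Specht characters. A heuristic about $2$-cores having ``ordinary-like'' reductions does not give it.
\item A transfer from RoCK blocks to arbitrary cores. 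The paper shows proportionality depends only on the $3$-bar-quotients, using runner-swapping functions $\runnerswap{\ep}{c}=\sum_a(-1)^{a+c}f_\ep^{(a+c)}e_\ep^{(a)}$. These commute with reduction modulo $3$ and send $\spn{\corandquot{\ga}{\alp0}{\alp1}}$ to $\pm\spn{\corandquot{\ga'}{\alp0}{\alp1}}$ for a neighbouring core $\ga'$.
\end{enumerate}

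\textbf{Necessity of (i).} This is where your plan is emptiest. A triangularity statement only tells you that proportional reductions share their extreme composition factor, which is what the regularisation theorem gives. That is a necessary condition far weaker than the $\rho,\si,\tau$ structure. Even granting the RoCK formula and core-independence, you would still have to classify all proportional pairs $P_{\alp0}s_{{\alp1}'}\propto P_{\bep0}s_{{\bep1}'}$. That is a factorisation problem in the ring of symmetric functions, and you do not address it.

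The paper avoids this by induction on the bar-weight, using core-independence to choose convenient cores.
\begin{itemize}
\item On $\thrc{1}{l}$ with $l$ large and of suitable parity, Morris's rule removes the unique largest bar of length $\equiv1\pmod6$ from both partitions. This leaves a proportional pair with quotients $(\rowrem{\alp0},\rowrem{\alp1})$ and $(\rowrem{\bep0},\rowrem{\bep1})$.
\item On $\thrc{1}{l}$ with $l=\max\{\alp0_1+\len{\alp1},\bep0_1+\len{\bep1}\}$, removing all removable $0$-nodes gives quotients $(\rowrem{\alp0},\colrem{\alp1})$ and $(\rowrem{\bep0},\colrem{\bep1})$.
\item The inductive hypothesis is applied to both pairs. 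A case analysis then forces the required form: it counts removable $0$-nodes, and applies induction once more to $\al^{-0},\be^{-0}$ on the empty core.
\end{itemize}

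\textbf{Condition (ii).} Your treatment is essentially the paper's reduction from irreducible characters to $\spn\al$, with two corrections. First, the classes on which $\lan\al\ran_+$ and $\lan\al\ran_-$ differ are those projecting to cycle type $\al$ itself, which has an odd number of even parts, not ``all parts distinct and odd''. Second, when $\be\neq\al$, the reason $\psi$ cannot match $\chi$ there is that $\psi$ vanishes on these elements, which lie outside $\haaa n$. Whether $\psi$ is real is not the relevant property.
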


\begin{theorem}
\label{thm:mainA}
Let \(\chi\) and \(\psi\) be distinct irreducible spin characters of \(\haaa n\), labelled by \stps \(\al\) and \(\be\) respectively. Then $\bar\chi$ is proportional to $\bar\psi$ \iff both of the following hold:
\begin{enumerate}
    \item either $\al=\be$, or there exist \tcps \(\rho\), \(\sigma\) and \(\tau\) such that
    \begin{align*}
\alp0 &= \rho + \sigma, & \bep0 &= \rho + \tau, \\
\alp1 &= \tau,          &     \bep1 &= \sigma;
\end{align*}
    \item whichever of \(\al\) and \(\be\) are even
    (i.e.~have an even number of even parts)
    have at least one part divisible by \(3\).
\end{enumerate}
In this case $\bar\chi=2^{(\len{\rho+\si}-\len{\rho+\tau}+\ep(\al)-\ep(\be))/2}\bar\psi$.
\end{theorem}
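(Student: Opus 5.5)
We will deduce \cref{thm:mainA} from \cref{thm:mainS} and its proof, using the standard relationship between spin characters of \(\hsss n\) and \(\haaa n\) under restriction, tracking carefully how associate pairs behave. Recall that if \(\al\) is even, \(\hsss n\) has a single spin character \(\spn\al\) labelled by \(\al\), whose restriction to \(\haaa n\) splits as a sum \(\spn\al_+ + \spn\al_-\) of two conjugate characters. If \(\al\) is odd, \(\hsss n\) has an associate pair \(\spn\al_\pm\), which restrict to the same irreducible character \(\spn\al\) of \(\haaa n\). So the roles of even and odd are interchanged between the two groups. This explains why condition (ii) and the sign of the \(\ep\)-terms in the scalar are swapped.

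The first step concerns the classes on which the characters live. A \(3\)-regular class of \(\hsss n\) on which a spin character can be nonzero is either a class of elements of odd cycle type (with no part divisible by \(3\)), which lies in \(\haaa n\), or a class of cycle type \(\al\) with \(\al\) odd and strict. The latter lie outside \(\haaa n\), so we may ignore them for \(\haaa n\). The spin characters of \(\haaa n\) differ between the two members of a conjugate pair only on the split classes of cycle type \(\al\) with \(\al\) even and strict. So on \(3\)-regular classes, \(\spn\al_+\) and \(\spn\al_-\) have distinct Brauer characters exactly when \(\al\) has no part divisible by~\(3\). This gives condition (ii) for \(\haaa n\): a character \(\spn\al_\pm\) with \(\al\) even can be proportional to another only if its difference from its conjugate vanishes \(3\)-regularly, i.e.\ only if \(\al\) has a part divisible by \(3\). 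This covers the case \(\al=\be\).

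Next, assume the relevant characters are nonzero only on the odd-type \(3\)-regular classes (for \(\haaa n\), averaging over the conjugate pair when \(\al\) is even and has a part divisible by \(3\)). Then the Brauer characters of \(\haaa n\) are restrictions of those of \(\hsss n\), with the following rules:
\begin{itemize}
\item if \(\al\) is even, the Brauer character of \(\spn\al_\pm\) is half the restriction of \(\spn\al\);
\item if \(\al\) is odd, the Brauer character of \(\spn\al\) for \(\haaa n\) equals the restriction of \(\spn\al_\pm\).
\end{itemize}
The key input from the proof of \cref{thm:mainS} is the following statement about the odd-type \(3\)-regular classes. On these classes, the values of \(\spn\al\) and \(\spn\be\) are proportional if and only if condition (i) holds. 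In that case the ratio is \(2^{(\len{\rho+\si}-\len{\rho+\tau})/2}\), up to the factors \(2^{\ep/2}\) coming from the normalisation of associate pairs. I expect to extract this from the proof of \cref{thm:mainS}, presumably via Morris-type recursions and the runner-swapping arguments on the \(3\)-bar abacus.

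Granting this, the scalar for \(\haaa n\) follows by bookkeeping. Passing from \(\hsss n\) to \(\haaa n\) multiplies the character by \(1/2\) when \(\al\) is even, and by \(1\) when \(\al\) is odd. In the \(\hsss n\) formula, the odd case carried the factor \(2^{-\ep(\al)/2}\) relative to the even case. Combining the two, the exponent changes from \(-\ep(\al)\) to \(+\ep(\al)\), and likewise for \(\be\), which gives the stated power of~\(2\).

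The main obstacle is the first step: showing that two \(\haaa n\) characters whose labels satisfy (i), but which fail (ii), are genuinely not proportional. This requires showing that the difference character of a conjugate pair is nonzero on a \(3\)-regular class exactly when the label has no part divisible by \(3\). For this I would use the explicit value \(\pm i^{\ast}\sqrt{\prod\al_j}\) on the class of type \(\al\). When \(\al\) has no part divisible by \(3\), this class is \(3\)-regular and the value is nonzero, so the two conjugate characters are distinguished. Moreover, the distinguishing value lies on a class where the other character (labelled \(\be\ne\al\)) vanishes, which rules out proportionality.
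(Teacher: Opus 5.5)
Your strategy matches the paper's. You pass to the averaged characters (the paper's $\spn\al$, which vanish off odd-type classes), use the fact that $[\al]_+$ and $[\al]_-$ differ only on classes of type $\al$ to get condition (ii), and then track powers of $2$. Your scalar bookkeeping is correct.

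Two remarks on the set-up:
\begin{itemize}
\item The input you need is exactly \cref{thm:main_reduced}, i.e.\ $\br{\spn\al}=2^{(\len{\rho+\si}-\len{\rho+\tau})/2}\br{\spn\be}$ for the averaged characters. You cannot extract it from the statement of \cref{thm:mainS}, which says nothing when an odd label has no part divisible by $3$, and \cref{thm:mainA} allows exactly that situation.
\item For $\al=\be$ you should note that $[\al]_+$ and $[\al]_-$ have equal degrees, so for them proportional means equal.
\end{itemize}

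The genuine gap is in your final paragraph, which is exactly the step you flag as the main obstacle. You claim that if $\al$ is even with no part divisible by $3$, then a character labelled by $\be\neq\al$ vanishes on the class of type $\al$. This holds only when $\al$ has an even part. If all parts of $\al$ are odd, that class is of odd type, and other spin characters need not vanish there.

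For a concrete counterexample, take $\haaa5$ with $\al=(5)$ and $\be=(3,2)$. These satisfy (i) with $\rho=\si=\emptyset$ and $\tau=(1)$. Elements projecting to $5$-cycles are $3$-regular. However, $[(3,2)]$, the faithful degree-$4$ character of $\mathrm{SL}_2(5)$, takes the value $\pm1$ on them. Morris's rule predicts this, since $(3,2)$ is itself a $5$-bar. So your argument does not rule out proportionality in this case.

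The fix is to compare two classes rather than one, as in the paper's \cref{lemma:prop_to_associates_or_conjugates}(ii):
\begin{itemize}
\item Take $g\in\haaa n$ projecting to type $\al$, and $t\in\hsss n\setminus\haaa n$. Both $g$ and $g^t$ are $3$-regular.
\item Since $g$ does not project to type $\be$, \cref{lemma:char_values}(ii) gives $\aspn\be(g^t)=\conj{\aspn\be}(g)=\aspn\be(g)$ for any irreducible $\aspn\be$ labelled by $\be$.
\item On the other hand, $[\al]_+(g^t)=[\al]_-(g)\neq[\al]_+(g)$.
\item So a relation $\br{[\al]_+}=c\,\br{\aspn\be}$ would force $[\al]_+(g)=c\,\aspn\be(g)=c\,\aspn\be(g^t)=[\al]_+(g^t)$, a contradiction.
\end{itemize}
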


Note that in general there are two different proper double covers of \(\sss n\), but they are isoclinic, so it does not matter which one we choose for our results \cite[\S 6.7]{atlas}.

In the next section \Cref{sec:defs} we set out the basic theory we shall use.  
In \cref{sec:reduction_to_averaged_chars} we show that both of our theorems can be reduced to studying \emph{averaged} characters of $\hsss n$, and then in \cref{sec:idpt-of-core} we show that proportionality of characters is independent of \tbc (an analogue of a result from the \(p=2\) case).
In the remaining sections \Cref{sec:if} and \Cref{onlyifsec} we complete the proofs of the `if' and `only if' parts of our main theorems.

The `if' proof  of \Cref{sec:if} relies on knowledge of decomposition numbers in \emph{RoCK blocks} in characteristic \(3\) and (perhaps surprisingly) also in characteristic \(2\).
The `only if' proof of \Cref{onlyifsec} is inductive, using Morris's rule and the branching rule to identify pairs of proportional characters of smaller groups.

\section{Definitions and techniques}
\label{sec:defs}

\subsection{Partitions and strict partitions}
\label{subsec:partitions}

A \emph{partition} is an infinite weakly-decreasing sequence $\la=(\la_1,\la_2,\dots)$ of integers which are eventually zero. We write $|\la|$ for the sum $\la_1+\la_2+\dots$, and say that $\la$ is a partition of $|\la|$. When writing partitions, we omit trailing zeroes. We write $\emptyset$ for the unique partition of $0$. For any partition $\la$, we write $\len{\la}$ for the number of non-zero parts of $\la$, which we call the \emph{length} of $\la$. The non-zero integers $\la_1,\la_2,\dots$ are the \emph{parts} of $\la$; given a partition $\la$ and $a \in \N$, we write $a\in\la$ to mean that $a$ is a part of $\la$. A \emph{\stp} is a partition whose non-zero parts are distinct.

We write $\scrp$ for the set of all partitions, \(\scrd\) for the set of all \stps, \(\scrp(n)\) for the set of partitions of \(n\), and \(\scrd(n)\) for the set of \stps of \(n\). 

We use some natural notation for combining partitions.
Given partitions \(\la\) and \(\mu\), we write:
\begin{itemize}
\item
$a\la = (a\la_1,a\la_2,\dots)$ for \(a \in \N\);
\item
$\la+\mu = (\la_1+\mu_1,\la_2+\mu_2,\dots)$;
\item
$\la\sqcup\mu$ for the partition obtained by combining all the parts of $\la$ and $\mu$ and writing them in decreasing order.
\end{itemize}

The \emph{Young diagram} of a partition $\la$ is the set
\[
[\la]=\setbuild{(r,c)\in\N^2}{c\ls\la_r},
\]
drawn in the plane using the English convention. We call the elements of the Young diagram the \emph{nodes} of $\la$. In general, a node means an element of $\N^2$.

The \emph{conjugate} of a partition \(\la\), denoted \(\la'\), is the partition whose Young diagram is obtained by reflecting that of \(\la\) in the main diagonal.
Thus the parts of \(\la'\) are the lengths of the columns of \(\la\).

A \emph{\tcp} is a partition of the form
\[
    \twoc{r} = (r,r-1, r-2, \ldots, 2,1)
\]
for some \(r \geq 0\) (viewing \(\twoc{0} = \emptyset\)).

\subsection{Spin characters of \texorpdfstring{\(\hsss n\)}{Ŝn} and \texorpdfstring{\(\haaa n\)}{Ân}}

We let $\hsss n$ and $\haaa n$ denote double covers of $\sss n$ and $\aaa n$ respectively, so that $\hsss n$ is a Schur cover of $\sss n$ for $n\gs4$, and $\haaa n$ is the preimage of $\aaa n$ under the natural projection from $\hsss n$ to $\sss n$. Here we give some background on irreducible characters of $\hsss n$ and $\haaa n$. The group $\hsss n$ has a central involution $z$ such that $\hsss n/\lan z\ran\cong\sss n$, and similarly for $\haaa n$. We say that an element $g\in\hsss n$ \emph{projects to cycle type $\al$} if the image of $g$ under the natural map has cycle type $\al$.

Let $G=\sss n$ or $\aaa n$. If $\chi$ is an irreducible character of $\hat G$, then $\chi(z)=\pm\chi(1)$. If $\chi(z)=\chi(1)$, then $\chi$ is the inflation to $\hat G$ of a character of $G$. The irreducible characters $\chi$ for which $\chi(z)=-\chi(1)$ are called \emph{spin characters}.

The irreducible spin characters of $\hsss n$ and $\haaa n$ were classified by Schur. Before describing the classification, it will be useful to consider \emph{associate} and \emph{conjugate} characters. If $\chi$ is a character of $\hsss n$, then the associate character $\ass\chi$ is obtained by tensoring with the sign character. If $\chi$ is a character of $\haaa n$, then the conjugate character $\conj\chi$ is obtained by precomposing with conjugation by an element of $\hsss n\setminus\haaa n$.

Now let $\al\in\scrd(n)$. We say that $\al$ is \emph{even} if it has an even number of positive even parts, and \emph{odd} otherwise. If $\al$ is even, then there is an irreducible character $\lan\al\ran$ of $\hsss n$, satisfying $\ass{\lan\al\ran}=\lan\al\ran$. If $\al$ is odd, then there is a pair of irreducible characters $\lan\al\ran_+$, $\lan\al\ran_-$, satisfying $\ass{\lan\al\ran_+}=\lan\al\ran_-$.

For $\haaa n$, the opposite situation holds: if $\al$ is even, then there is a pair of irreducible characters $[\al]_+$, $[\al]_-$ of $\haaa n$, satisfying $\conj{[\al]_+}=[\al]_-$. If $\al$ is odd, then there is an irreducible character $[\al]$ satisfying $\conj{[\al]}=[\al]$.

Schur proves that these characters, listed for all \stps of $n$, give complete irredundant lists of irreducible characters for $\hsss n$ and $\haaa n$. The characters are related by restriction as follows:
\begin{equation}\label{restrictionofspin}
\lan\al\ran\res{\haaa n}=[\al]_++[\al]_-\text{ for $\al$ even},\qquad\lan\al\ran_+{}\res{\haaa n}=\lan\al\ran_-{}\res{\haaa n}=[\al]\text{ for $\al$ odd}.
\end{equation}

It will be convenient in this paper to adopt some notation introduced in \cite{fayers20spin2alt}. For any $\al\in\scrd$, we define the generalised $\hsss n$-character
\[
\spn\al=
\begin{cases}
\lan\al\ran&\text{if $\al$ is even}
\\
\frac1{\sqrt2}(\lan\al\ran_++\lan\al\ran_-)&\text{if $\al$ is odd}
\end{cases}
\]
and the generalised $\haaa n$-character
\[
\apn\al=
\begin{cases}
[\al]&\text{if $\al$ is odd}
\\
\frac1{\sqrt2}([\al]_++[\al]_-)&\text{if $\al$ is even}.
\end{cases}
\]
Using the generalised characters $\spn\al$ simplifies several results for spin representations, such as
the branching rule (\cref{spinbranch} below).
We show in \Cref{sec:reduction_to_averaged_chars} that to identify proportional pairs it suffices to consider the characters \(\spn\al\) in place of \(\spnorig{\al}\), and moreover that doing so settles the problem for \(\haaa n\) as well as for \(\hsss n\).

Given an ordinary character \(\xi\), we denote by \(\br{\xi}\) the Brauer character of the \(3\)-modular reduction of the representation affording \(\xi\).
Equivalently, \(\br{\xi}\) is obtained by restricting \(\xi\) to \(3\)-regular classes.

\subsection{Bar removal and Morris's rule}

Spin character values can be computed iteratively by removing \emph{bars} and applying \emph{Morris's rule}, analogously to removing hooks and applying the Murnaghan--Nakayama rule for linear characters.

\begin{definition}
Let $k\in \N$ odd and \(\al\in\scrd\). A \emph{$k$-bar} in $\al$ is one of the following:
\begin{itemize}
\item
a part of size equal to $k$;
\item
a pair of parts whose sizes sum to $k$;
\item
the last \(k\) nodes in a part of size $\al_r>k$ such that $\al_r-k$ is not already a part of $\al$.
\end{itemize}

\emph{Removing a \(k\)-bar} from \(\al\) means deleting the bar and, if necessary, reordering the remaining parts to form a \stp.
A \stp is a \emph{$k$-bar-core} if it has no $k$-bars.
For any $\al$, the \emph{$k$-bar-core of $\al$} is the \stp obtained by repeatedly removing $k$-bars until a $k$-bar-core is reached, and the \emph{$k$-bar-weight} of $\al$ is the number of $k$-bars removed to reach the $k$-bar-core.
\end{definition}

Morris's rule determines character values on elements projecting to cycle type with all parts odd (whilst on elements that project to cycle type with an even part, the generalised character \(\spn{\al}\) vanishes: the irreducible character(s) labelled by \(\al\) vanish on such elements unless the cycle type is equal to \(\al\) and \(\al\) is odd, in which case \(\spnorig{\al}_{\plus}\) and \(\spnorig{\al}_{\minus}\) take opposite signs \cite[Theorem~8.7]{hoffmanhumphreys1992projreps}).
For each partition \(\nu\) of \(n\) into odd parts, there are two conjugacy classes of \(\hsss n\) which project to cycle type \(\nu\), and a spin character takes values of opposite sign on these two classes. We let $g\in\hsss n$ be an element projecting to cycle type $\nu$ for which $\spn{(n)}(g)>0$, and then define \(\xi(\nu)\) to be the value of a spin character \(\xi\) on this element \(g\).

\begin{theorem}
[{Morris's rule \cite[Theorem 10.1]{hoffmanhumphreys1992projreps}}]
\label{spinmurnak}
Suppose $\al\in\scrd(n)$ and $k\in\N$ is odd. Let $\rb\al k$ be the set of \stps that can be obtained from $\al$ by removing a $k$-bar. For each $\la\in\rb\al k$ there is a non-zero constant $d_\la$ such that for any partition $\nu$ of \(n-k\) into odd parts,
\[
\spn\al(\nu\sqcup(k)) = \sum_{\la\in\rb\al k} d_\la \spn\la(\nu).
\]
\end{theorem}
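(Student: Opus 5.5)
The rule is proved in \cite[Theorem 10.1]{hoffmanhumphreys1992projreps}, so in the paper we would simply cite it. If we wanted a self-contained argument, we would go through Schur's $Q$-functions. The starting point is Schur's characteristic map for spin characters. In the ring $\Gamma=\Q[p_1,p_3,p_5,\dots]$ generated by odd power sums, we have for $\nu$ a partition of $n$ into odd parts
\[
p_\nu=\sum_{\al\in\scrd(n)} 2^{\lceil(\len{\nu}-\len{\al})/2\rceil}\,c_\al\,\lan\al\ran(\nu)\,Q_\al ,
\]
where $c_\al$ is an explicit power of $\sqrt2$. Rewritten in terms of the averaged characters $\spn\al$, this normalisation simplifies: the factor $\frac1{\sqrt2}$ in the definition of $\spn\al$ for odd $\al$ exactly absorbs the parity correction. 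One gets $\spn\al(\nu)=2^{(\len\nu-\len\al)/2}\langle p_\nu,Q_\al\rangle'$ for a suitable inner product $\langle\ ,\ \rangle'$ on $\Gamma$, for which the $Q_\al$ are orthogonal with $\langle Q_\al,Q_\al\rangle'=2^{\len\al}$ and $\langle p_\nu,p_\mu\rangle'=\delta_{\nu\mu}z_\nu2^{-\len\nu}$.

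Next we use $p_{\nu\sqcup(k)}=p_kp_\nu$. The adjoint of multiplication by $p_k$ with respect to $\langle\ ,\ \rangle'$ is the differential operator $\tfrac{k}{2}\,\partial/\partial p_k$. So
\[
\spn\al(\nu\sqcup(k))=2^{(\len\nu+1-\len\al)/2}\bigl\langle p_\nu,\tfrac k2\,\partial_{p_k}Q_\al\bigr\rangle' .
\]
The whole proof therefore reduces to a combinatorial formula for the derivative:
\[
\tfrac k2\,\partial_{p_k}Q_\al=\sum_{\la\in\rb\al k}e_\la\,Q_\la
\]
with non-zero constants $e_\la$. Once we have this, substituting back gives the claimed identity with $d_\la=e_\la\,2^{(\len\la-\len\al+1)/2}$, which is non-zero.

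The main obstacle is proving this derivative formula, and specifically showing that exactly the $k$-bar removals appear, each with a non-zero coefficient. We would start from the generating function $Q(t)=\sum_r q_rt^r=\exp\bigl(2\sum_{j\text{ odd}}p_jt^j/j\bigr)$. This gives $\tfrac k2\partial_{p_k}q_r=q_{r-k}$. We then express $Q_\al$ as the Pfaffian of the matrix $(Q_{(\al_i,\al_j)})$, where $Q_{(a,b)}=q_aq_b+2\sum_{m\ge1}(-1)^mq_{a+m}q_{b-m}$, with the usual convention of appending a zero part when $\len\al$ is odd. Differentiating, we get $\tfrac k2\partial_{p_k}Q_{(a,b)}=Q_{(a-k,b)}+Q_{(a,b-k)}$. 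By the Leibniz rule for Pfaffians, $\tfrac k2\partial_{p_k}Q_\al$ is then a sum of terms, in each of which one part $\al_i$ is lowered to $\al_i-k$.

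Each term is identified using the straightening rules for $Q$-functions indexed by arbitrary integer sequences. First, $Q_{(\dots,a,b,\dots)}=-Q_{(\dots,b,a,\dots)}$ for $(a,b)\neq(0,0)$. Second, a pair of entries $a,-a$ with $a\ne0$ contributes a term killing or doubling. Third, negative entries may be removed with a sign. Under these rules, lowering $\al_i$ to $\al_i-k$ gives zero if $\al_i-k$ equals another part. It gives a signed $Q_\la$ with $\la$ obtained by reordering if $\al_i-k>0$ is new. If $\al_i=k$ the part is deleted. If $\al_i-k=-\al_j<0$, the pair is deleted, possibly with a factor $2$. These are exactly the three kinds of $k$-bar, and distinct $i$ give distinct $\la$, so no cancellation occurs and every $e_\la\in\{\pm1,\pm2\}$. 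The care needed is in the bookkeeping for signs and factors of $2$ in the Pfaffian straightening, especially when $\len\al$ is odd and a zero part is adjoined. Nonvanishing is all the rule requires, so exact signs are not needed.
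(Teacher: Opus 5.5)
Citing \cite[Theorem~10.1]{hoffmanhumphreys1992projreps}, as your first sentence does, is exactly what the paper does: Morris's rule is quoted as background and not proved there. Your optional self-contained $Q$-function argument is a genuinely different route (essentially Morris's original one), and its reduction steps are sound:
\begin{itemize}
\item one does have $\spn\al(\nu)=2^{(\len\nu-\len\al)/2}\langle p_\nu,Q_\al\rangle'$, although in your displayed expansion of $p_\nu$ the $\nu$-dependent power of $2$ has the wrong sign (the coefficient of $Q_\al$ is $2^{-(\len\nu+\len\al)/2}\spn\al(\nu)$);
\item $\tfrac k2\partial_{p_k}$ is the adjoint of multiplication by $p_k$;
\item the Leibniz rule for the Pfaffian gives $\tfrac k2\partial_{p_k}Q_\al=\sum_iQ_{\gamma^{(i)}}$, where $\gamma^{(i)}$ is $\al$ with its $i$th entry lowered by $k$. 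Lowering an appended zero produces a trailing $-k$, which contributes nothing.
\end{itemize}

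The straightening step, which you rightly call the heart of the matter, is wrong as written. Take $Q_{(r,s)}=q_rq_s+2\sum_{m\ge1}(-1)^mq_{r+m}q_{s-m}$ for all integers $r,s$.
\begin{itemize}
\item Antisymmetry $Q_{(\dots,r,s,\dots)}=-Q_{(\dots,s,r,\dots)}$ holds only when $r+s\neq0$. For example, $Q_{(1,-1)}=0$ but $Q_{(-1,1)}=-2$.
\item The correct companion rule is $Q_{(\dots,r,-r,\dots)}+Q_{(\dots,-r,r,\dots)}=2(-1)^rQ_{(\dots,\widehat{r},\widehat{-r},\dots)}$.
\item A negative entry cannot be ``removed with a sign'', since that would change the degree $\sum_i\gamma_i$. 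Rather, $Q_\gamma=0$ whenever the \emph{last} entry of $\gamma$ is negative.
\end{itemize}

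With the correct rules you must also treat a case you omit: $\al_i<k$ with $k-\al_i\notin\al$. Moving the negative entry to the end by ordinary swaps shows this term is $0$, and that is what ensures only $k$-bar removals appear.

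Finally, your claim that distinct $i$ give distinct $\la$ is false for a bar consisting of two parts $\al_i>\al_j$ with $\al_i+\al_j=k$. Lowering either part yields $\la=\al\setminus\{\al_i,\al_j\}$, so cancellation must be ruled out. It can be:
\begin{itemize}
\item lowering the larger part gives $(\dots,-\al_j,\dots,\al_j,\dots)$, which straightens to $\pm2Q_\la$;
\item lowering the smaller part gives $(\dots,\al_i,\dots,-\al_i,\dots)$ with no partner to the right of $-\al_i$, hence $0$.
\end{itemize}
For example, with $\al=(2,1)$ and $k=3$ one gets $Q_{(-1,1)}=-2$ and $Q_{(2,-2)}=0$, and indeed $\tfrac32\partial_{p_3}(q_2q_1-2q_3)=-2$.

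With these corrections you do get $e_\la\in\{\pm1,\pm2\}$ and $d_\la=e_\la2^{(\len\la-\len\al+1)/2}$, independent of $\nu$, as required.
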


We apply Morris's rule to deduce that removing bars from a proportional pair of spin characters yields another proportional pair, analogously to \cite{fayersmcd2025spintospecht}.

\begin{proposition}
[{Cf.~\cite[Lemma~3.2 and Proposition~3.3]{fayersmcd2025spintospecht}}]
\label{prop:weight_and_remove_bars_from_prop_pair}
Let \(\al,\be \in \scrd(n)\), and suppose \(\bspn{\al} \propto \bspn{\be}\). Let \(k \in \N\) be odd and not divisible by $3$. Then the \(k\)-bar-weights of \(\al\) and \(\be\) are equal. Furthermore, if \(\hat{\al}, \hat{\be}\) denote the \stps obtained by removing all \(k\)-bars from \(\al\) and \(\be\) respectively, then \(\bspn{\hat{\al}} \propto \bspn{\hat{\be}}\).
\end{proposition}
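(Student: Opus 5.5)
We argue by induction on the \(k\)-bar-weight, using Morris's rule (\cref{spinmurnak}) in the same way as the cited results of \cite{fayersmcd2025spintospecht}. The key observation is that, since \(k\) is odd and \(3\nmid k\), for any partition \(\nu\) into odd parts the cycle type \(\nu\sqcup(k)\) is \(3\)-regular \iff \(\nu\) is. So \(3\)-regular classes of \(\hsss{n-k}\) can be extended to \(3\)-regular classes of \(\hsss n\) by adjoining a \(k\)-cycle, and Morris's rule lets us pass from \(\al\) to the characters obtained by removing one \(k\)-bar. Repeating this, we may remove \(k\) from the cycle type \(w\) times.

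For the statement about weights, let \(w\) be the \(k\)-bar-weight of \(\al\). Suppose \(n=mk+r\) with the \(k\)-bar-core relevant. Evaluate both characters on a \(3\)-regular class of the form \(\nu\sqcup(k^m)\), where \(m\) is as large as possible. Iterating \cref{spinmurnak}, \(\spn\al(\nu\sqcup(k^{j}))\) equals a sum over sequences of \(j\) successive \(k\)-bar removals, and it is identically zero in \(\nu\) once \(j>w\), because no such sequences exist.

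Conversely, we need a nonvanishing statement at \(j=w\). Here \(\spn\al(\nu\sqcup(k^w))\) is a nonzero multiple of \(\spn{\hat\al}(\nu)\): the sum over removal sequences collapses to a single character, the \(k\)-bar-core \(\hat\al\), with coefficient \(c_\al\) equal to a signed weighted count of removal paths. This is the analogue of the classical fact that the \(k\)-quotient/bar-quotient structure makes this coefficient a nonzero (essentially a product of hook-type terms up to a power of \(2\)). To see that \(c_\al\ne0\), I would use the \(k\)-bar-quotient description, under which the removal paths correspond to standard tableaux of the quotient with consistent sign, as in Morris's bar-core theory.

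Taking \(\nu=(1^{|\hat\al|})\), which is \(3\)-regular, gives \(\spn{\hat\al}(\nu)\ne0\). Hence \(\spn\al\) is nonzero on some \(3\)-regular class with exactly \(w\) cycles of length \(k\) adjoined to \(\nu\), and zero whenever more than \(w\) copies of \(k\) are adjoined. Proportionality \(\bspn\al=c\,\bspn\be\), with \(c\ne0\) (the Brauer characters are nonzero at the identity), therefore forces equal weights. If \(\be\) had larger weight \(w'>w\), then evaluating at \(\nu\sqcup(k^{w'})\) would give \(0\) on the \(\al\) side and a nonzero value on the \(\be\) side.

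Finally, with \(w\) the common weight, for every \(3\)-regular odd \(\nu\) we have
\[
c_\al\spn{\hat\al}(\nu)=\spn\al(\nu\sqcup(k^w))=c\,\spn\be(\nu\sqcup(k^w))=c\,c_\be\spn{\hat\be}(\nu).
\]
Brauer characters of spin characters vanish on \(3\)-regular classes with even parts (\(\spn\cdot\) vanishes there), so this gives \(\bspn{\hat\al}\propto\bspn{\hat\be}\). The main obstacle is the nonvanishing of the collapsed coefficient \(c_\al\); if the sign bookkeeping in Morris's constants \(d_\la\) is awkward, I would instead cite the standard result that a spin character is determined on such classes via its bar-core, with a nonzero coefficient.
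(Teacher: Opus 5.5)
Your proposal is correct and is essentially the paper's own argument. You iterate Morris's rule on the classes \(\nu\sqcup(k^j)\), which are \(3\)-regular because \(3\nmid k\), and characterise the \(k\)-bar-weight as the largest \(w\) with \(\spn\al\) non-zero on \((k^w,1^{n-kw})\); dividing by the non-zero collapsed coefficient then passes proportionality down to the \(k\)-bar-cores. The paper likewise asserts this characterisation of the weight, and hence the non-vanishing of that coefficient, without proof, so citing the standard result as you suggest is exactly what it does.
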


\begin{proof}
The characters \(\spn{\al}\) and \(\spn{\be}\) vanish on precisely the same \(3\)-regular classes.
In particular, the $k$-bar-weights of \(\al\) and \(\be\) are both equal to the largest $w$ such that \(\spn{\al}\) and \(\spn{\be}\) are non-zero on elements projecting to cycle type \((k^w, 1^{n-kw})\).

Now, \(\spn{\hatal}\) and \(\spn{\hat{\be}}\) both vanish on elements projecting to cycle type with an even part (by \cite[Theorem~8.7]{hoffmanhumphreys1992projreps}, as noted above).
For a partition $\nu$ of $n-wk$ into odd parts coprime to \(3\), applying \cref{spinmurnak} iteratively gives
\[
\bspn{\hatal}(\nu)
    =\frac1{d_{\hatal}} \bspn\al(\nu\sqcup(k^w))
    \propto \frac1{d_{\hatal}} \bspn\be(\nu\sqcup(k^w))
    =\frac{d_{\hat{\be}}} {d_{\hatal}} \bspn{\hat{\be}}(\nu)
\]
for some constants \(d_{\hatal}\), \(d_{\hat{\be}}\).
Thus \(\bspn{\hat{\al}} \propto \bspn{\hat{\be}}\).
\end{proof}

\subsection{Blocks and residues}

The block classification for $\hsss n$ in odd characteristic $p$ was proved by Humphreys \cite{humphreysblocks}.
It is easy to show that, in odd characteristic, a spin character can never lie in the same block as a non-spin character, so we naturally have \emph{spin blocks} and \emph{non-spin blocks}.
In this paper we are concerned with the spin blocks.
Humphreys shows that two irreducible spin characters labelled by \stps $\al$ and $\be$ lie in the same $p$-block of $\hsss n$ \iff $\al$ and $\be$ have the same \pbc, except in the case where $\al=\be$ is an odd \pbc, and the two characters are $\lan\al\ran_+$ and $\lan\al\ran_-$; in this case, these two characters each lie in a simple $p$-block. To remedy this slightly awkward situation, it is more convenient to consider \emph{superblocks}: the group algebra of $\hsss n$ can be naturally viewed as a superalgebra, and indecomposable direct summands of this superalgebra are called superblocks. Again, the superblocks are naturally divided into spin superblocks and non-spin superblocks. Each superblock is a direct sum of one or two blocks, and in fact spin superblocks are almost always the same thing as spin blocks; the only exception is that when $\al$ is an odd $p$-bar-core, the sum of the two simple blocks containing the characters $\lan\al\ran_+$ and $\lan\al\ran_-$ is a superblock. (The superalgebra approach to the representation theory of $\hsss n$ was pioneered by Kleshchev and others, and is explained in \cite{kleshbook}.)

We abuse terminology by using `$p$-block' (or simply `block') to mean `superblock' in this paper.
With this convention, it is a consequence of the above results that for any $\al\in\scrd$ the generalised character $\spn\al$ lies in a single block, and that $\spn\al$ and $\spn\be$ lie in the same block \iff $\al$ and $\be$ have the same \pbc. This necessarily means that they have the same \pbw, so we can label a block by its \pbc and \pbw (that is, by the \pbc and \pbw of any strict partition labelling a spin character in that block).

It will be helpful to have an alternative description of the blocks of $\hsss n$, using residues. For simplicity, we specialise to characteristic $3$ here.

\begin{definition}
The \emph{\spr} of a node $(r,c)$ is:
\begin{align*}
    0,& \text{ if \(c \equiv 0\text{ or }1 \ppmod{3}\);} \\
    1,& \text{ if \(c \equiv 2 \ppmod{3}\).}
\end{align*}
For $\ep\in\{0,1\}$, an \emph{\nd\ep} means a node of \spr $\ep$. The \emph{content} of a partition $\al$ is the multiset of the residues of its nodes.
\end{definition}

For example, let $\al=(7,5,4,1)$. The \spr{}s of the nodes of $\al$ are illustrated in the following diagram.
\[
\young(0100100,01001,0100,0)
\]
The content of $\al$ is $\{0^{12},1^5\}$. 

(We remark that often terms such as `spin-residue' and `spin-content' are used, particularly in papers comparing spin characters with non-spin characters. Since we only work with spin characters in this paper, we omit the prefix `spin-'.)

It was proved by Morris and Yaseen (in a slightly different formulation) that two \stps of $n$ have the same \tbc \iff they have the same content \cite[Theorem 5]{morrisyaseen}.
As a consequence, two generalised characters $\spn\al$ and $\spn\be$ lie in the same $3$-block of $\hsss n$ \iff $\al$ and $\be$ have the same content.
Thus we can label the $3$-blocks of $\hsss n$ by contents: the content of a block $B$ is the content of $\al$ for any $\spn\al$ lying in $B$.

\subsection{Induction, restriction and branching rules}\label{indressec}

A node \((r,c)\) of a \stp \(\al\) is \emph{\spre} if it can be removed, possibly together with other nodes of the same \spr, to leave a \stp.
A node $(r,c)$ which is not a node of $\al$ is \emph{\spad} if it can be added, possibly together with other nodes of the same \spr, to obtain a \stp.
An \emph{\esprm\ep} means a \spre \nd\ep, and likewise with `addable' in place of `removable'.
We denote by \(\al^{-\eps}\) the \stp obtained from \(\al\) by removing  all \esprms\eps.

For example, consider the partition \(\al = (7,5,4,1)\) depicted above.
The \esprms0 are $(1,7)$, $(3,4)$, $(3,3)$ and $(4,1)$, and \(\al^{-0} = (6,5,2)\); the unique \espam0 is $(2,6)$.

Given a (generalised) character $\chi$ for $\hsss n$, we write $\chi{}\res{\hsss{n-1}}$ for its restriction to $\hsss{n-1}$, and $\chi{}\uparrow^{\hsss{n+1}}$ for the induced character for $\hsss{n+1}$. Now suppose $\chi$ lies in a single block, and write the content of this block as $\{0^a,1^b\}$. Define $e_0\chi$ to be the component of $\chi{}\res{\hsss{n-1}}$ lying in the block with content $\{0^{a-1},1^b\}$ if there is such a block, and $e_0\chi=0$ otherwise. Define $f_0\chi$ to be the component of $\chi{}\uparrow^{\hsss{n+1}}$ lying in the block with content $\{0^{a+1},1^b\}$ if there is such a block, or $0$ otherwise. Define the characters $e_1\chi$ and $f_1\chi$ similarly.
(It follows from the branching rules below and the block classification that $\chi{}\res{\hsss{n-1}}=e_0\chi+e_1\chi$ and $\chi{}\uparrow^{\hsss{n+1}}=f_0\chi+f_1\chi$ for any $\chi$.)

Fix $\ep\in\{0,1\}$ for the rest of this subsection. The functor $e_\ep$ is defined for any $n>0$, so we can define the power $e_\ep^r$ and the divided power $e_\ep^{(r)}=e_\ep^r/r!$, and similarly for $f_\ep$.
These functors commute with reduction modulo $3$.

The branching rule for spin characters (due to Dehuai and Wybourne \cite{dehuaiwybourne}) describes the decomposition of an induced or restricted spin character into irreducibles. Combined with the block classification in terms of contents, this yields the following statement giving the effect of the functors on the characters $\spn\al$.

\begin{theorem}
[Spin branching rule]\label{spinbranch}
Suppose $\al\in\scrd(n)$ and $\la\in\scrd(n-r)$, and $\ep\in\{0,1\}$. Then $\ip{e_\ep^{(r)}\spn\al}{\spn\la}$ and $\ip{f_\ep^{(r)}\spn\la}{\spn\al}$ are non-zero if and only if $\la$ is obtained by removing $r$ \nds\ep from $\al$.
If this is the case, let $b$ be the number of values $c\geq2$ such that $\al\ydsm\la$ contains a node in column $c$ but does not contain any nodes in columns $c-1$ and $c+1$. Then
\[
\ip{e_\ep^{(r)}\spn\al}{\spn\la}=\ip{f_\ep^{(r)}\spn\la}{\spn\al}=2^{b/2}.
\]
\end{theorem}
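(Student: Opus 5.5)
The plan is to prove the case $r=1$ directly from the classical branching rule for the irreducible spin characters $\spnorig{\al}$, $\spnorig{\al}_\pm$, together with the block classification by contents. The general case then follows by iterating $e_\ep$ and counting orderings. The statements for $f_\ep$ come for free by adjointness. The generalised characters $\spn\al$ are orthonormal: for $\al$ odd, $\ip{\spn\al}{\spn\al}=\tfrac12(1+1)=1$, and distinct labels give disjoint constituents. They are also real-valued. Frobenius reciprocity then gives $\ip{f_\ep^{(r)}\spn\la}{\spn\al}=\ip{\spn\la}{e_\ep^{(r)}\spn\al}$, since block projections are self-adjoint. So it suffices to treat $e_\ep^{(r)}$.

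\textbf{Step 1 ($r=1$).} Removing a node in column $c$ of the part $\al_i$ replaces $\al_i$ by $\al_i-1$. This changes the parity of $\al$ exactly when $c\geq2$; if $c=1$, an odd part $1$ disappears and the parity is unchanged. I would read off from the Dehuai--Wybourne rule what happens in each of the four parity cases.
\begin{itemize}
\item Even $\al$ to odd $\la$: $\spnorig{\al}\res{}$ contains $\spnorig{\la}_++\spnorig{\la}_-=\sqrt2\spn\la$.
\item Odd $\al$ to even $\la$: each of $\spnorig{\al}_\pm\res{}$ contains $\spnorig\la$ once, so $\spn\al\res{}$ contains $\tfrac{2}{\sqrt2}\spn\la=\sqrt2\spn\la$.
\item Column $1$, with $\al$ even: the coefficient of $\spn\la$ is $1$.
\item Column $1$, with $\al$ odd: $\spnorig{\al}_\pm\res{}$ contains $\spnorig{\la}_\pm$, so again the coefficient of $\spn\la$ is $1$.
\end{itemize}
Hence $\spn\al\res{\hsss{n-1}}=\sum_\la 2^{b/2}\spn\la$, summing over $\la$ obtained by removing one node, with $b=1$ if the node is in a column $c\geq2$ and $b=0$ otherwise. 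By Morris--Yaseen, projecting to the block of content $\{0^{a-1},1^b\}$ (respectively $\{0^a,1^{b-1}\}$) keeps exactly those $\la$ obtained by removing an $\ep$-node. This is the case $r=1$ of the theorem.

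\textbf{Step 2 (general $r$).} Iterating Step 1, $\ip{e_\ep^{r}\spn\al}{\spn\la}$ is a sum over all chains $\al=\mu^0\supset\mu^1\supset\dots\supset\mu^r=\la$ of strict partitions, each step removing one $\ep$-node. Each chain contributes $2^{m/2}$, where $m$ is the number of removed nodes lying in columns $\geq2$. This is nonzero \iff $\la$ is obtained from $\al$ by removing $r$ $\ep$-nodes, since any ordering of the removable ones that stays strict suffices; this can be checked by removing nodes from bottom rows first. The remaining claim is the combinatorial identity
\[
\sum_{\text{chains}}2^{m/2}=r!\,2^{b/2}.
\]
I would organise the skew shape $\al\ydsm\la$ into connected components by columns. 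Since $\ep$-nodes occur in columns $\equiv0,1\pmod3$ (for $\ep=0$) or $\equiv2\pmod 3$ (for $\ep=1$), a component is either a single column or a pair of adjacent columns $3k,3k+1$. Strictness forces at most one node per column in each component, except for column $1$. For an adjacent pair $3k,3k+1$ there are two configurations. If the two nodes lie in the same row, there is $1$ valid order out of $2$, and the factor is $\sqrt2^2=2$. If they lie in different rows, strictness again forces a single order, giving the same factor $2$. In both cases the component contributes $2!\cdot 2^{0}$ after averaging, matching $b$ receiving no contribution. A singleton column $c\geq2$ contributes a free order and a factor $\sqrt2$. Column-$1$ nodes contribute $1$. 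Different components interleave freely, which yields the multinomial factor making the total $r!\,2^{b/2}$.

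The main obstacle is Step 2: verifying that the strictness constraints on intermediate shapes cut down the orderings within each adjacent-column component by exactly the compensating power of $2$, and that there are no interactions between components. I would first check this on small examples, such as removing $(3,4)$ and $(3,3)$ from $(7,5,4,1)$, and then argue component by component.
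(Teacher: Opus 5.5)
Your proposal is correct and is essentially the derivation the paper has in mind: the paper gives no separate proof, simply deducing the statement from the Dehuai--Wybourne branching rule together with the Morris--Yaseen description of blocks by contents, and your chain count just makes explicit the passage to divided powers that the paper leaves implicit. In that count, each adjacent pair $\{3k,3k+1\}$ containing two nodes admits exactly one of its two orders, and everything else is free. The step you flag as the obstacle does go through: if some intermediate shape $\mu$ had $\mu_i\le\mu_j$ with $i<j$ and $\mu_j>0$, then $(j,\mu_j)$ and $(i,\mu_j+1)$ would both be nodes of $\al\ydsm\la$ in adjacent columns of residue $\ep$, which (using that every column, including column~$1$, contains at most one node of $\al\ydsm\la$) forces $\ep=0$, $\al_i=3k+1$, $\al_j=3k$ with $(i,3k+1)$ removed before $(j,3k)$ --- exactly the forbidden order within a single component.
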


We apply the spin branching rule to deduce that removing all \esprms\eps from a proportional pair of spin characters yields another proportional pair, analogously to \cite{fayersmcd2025spintospecht}.

\begin{proposition}
[{Cf.~\cite[Proposition~3.4]{fayersmcd2025spintospecht}}]
\label{prop:restriction_gives_prop_pair}
Let \(\al,\be\in\scrd\) with \(\bspn{\al} \propto \bspn{\be}\).
Let \(\ep \in \set{0,1}\).
Then the numbers of \esprms\eps of \(\al\) and \(\be\) are equal, and the numbers of \espams\eps of \(\al\) and \(\be\) are equal.
Furthermore, \(\bspn{\al^{-\ep}} \propto \bspn{\be^{-\eps}}\).
\end{proposition}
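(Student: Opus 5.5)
Since $\bspn\al\propto\bspn\be$ means $\bspn\al=c\,\bspn\be$ for some non-zero scalar $c$, and the functors $e_\ep$, $f_\ep$ and their divided powers are linear and commute with $3$-modular reduction, we get $e_\ep^{(r)}\bspn\al=c\,e_\ep^{(r)}\bspn\be$ and $f_\ep^{(r)}\bspn\al=c\,f_\ep^{(r)}\bspn\be$ for every $r$. The idea is to detect the number of $\ep$-removable (resp.\ $\ep$-addable) nodes as the largest $r$ for which $e_\ep^{(r)}$ (resp.\ $f_\ep^{(r)}$) is non-zero after reduction, and then to identify $\spn{\al^{-\ep}}$ with the top non-zero restriction.

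First, let $m$ be the number of $\ep$-removable nodes of $\al$. Every $\ep$-node removable from $\al$ (as part of removing several $\ep$-nodes to leave a strict partition) is an $\ep$-removable node by definition, and the set of all of them can be removed simultaneously to give $\al^{-\ep}$. So by \cref{spinbranch}, $e_\ep^{(m)}\spn\al=2^{b/2}\spn{\al^{-\ep}}$ is a single non-zero multiple of one $\spn\la$, and $e_\ep^{(r)}\spn\al=0$ for $r>m$. This requires checking that removing $r$ $\ep$-nodes to obtain a strict partition forces $r\le m$, i.e.\ any removable set of $\ep$-nodes consists of $\ep$-removable nodes; this is immediate from the definition. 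Also, $\al^{-\ep}$ is the unique result when $r=m$.

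The key point is that the reduction of a non-zero generalised character $\spn\la$ is non-zero, since $\spn\la(1)\neq0$ and the identity is $3$-regular. Hence $m$ is the largest $r$ with $e_\ep^{(r)}\bspn\al\neq0$. The same holds for $\be$, and the proportionality relation then forces the two maxima to coincide. At $r=m$ we get
\[2^{b_\al/2}\bspn{\al^{-\ep}}=c\,2^{b_\be/2}\bspn{\be^{-\ep}},\]
which gives $\bspn{\al^{-\ep}}\propto\bspn{\be^{-\ep}}$.

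For addable nodes the argument is the same with $f_\ep^{(r)}$. Adding all $\ep$-addable nodes at once yields a strict partition, so by \cref{spinbranch} the largest $r$ with $f_\ep^{(r)}\spn\al\neq0$ is the number of $\ep$-addable nodes. Here $f_\ep^{(r)}\spn\al$ is a sum of $\spn\mu$ with positive coefficients, and its value at the identity is a positive sum of degrees, hence non-zero, so it survives reduction.

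The only mildly delicate step is this combinatorial check that the maximal removable/addable set is exactly the set of all $\ep$-removable/addable nodes and yields a single strict partition. For removable nodes I would verify it directly from the residue pattern $0,1,0,0,1,0,\dots$: within one row the removable $\ep$-nodes at the end form a block of at most two nodes, and distinctness of parts is preserved when all such blocks are removed simultaneously.
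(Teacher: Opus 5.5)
Your proposal is correct and follows the paper's proof. In both, the number of $\varepsilon$-removable (resp.\ $\varepsilon$-addable) nodes is read off as the largest $r$ for which $e_\varepsilon^{(r)}$ (resp.\ $f_\varepsilon^{(r)}$) is non-zero on the reduction, and the top restriction is compared as a non-zero multiple of the reduction of the character labelled by $\alpha^{-\varepsilon}$. The paper leaves two points implicit that you spell out: reductions are non-zero because the degree is non-zero, and all $\varepsilon$-removable/addable nodes can be removed/added simultaneously to give a strict partition. Both of your justifications are correct.
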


\begin{proof}
By \cref{spinbranch}, the number of \esprms\ep of \(\al\) is the maximal \(r\) such that $e_\ep^{(r)}\bspn\al\neq0$; since \(\bspn\al \propto \bspn\be\), this is the same as the maximal $r$ such that $e_\ep^{(r)}\bspn\be\neq0$.
Likewise for the number of \espams\eps with \(f_\eps\) in place of \(e_\eps\).

Write \(r\) for the common number of \esprms\eps in \(\al\) and \(\be\).
Then \cref{spinbranch} furthermore shows that $e_\ep^{(r)}\bspn\al$ is a non-zero multiple of $\bspn{\al^{-\ep}}$ and that $e_\ep^{(r)}\bspn\be$ is a non-zero multiple of $\bspn{\be^{-\ep}}$.
Thus
\[
\bspn{\al^{-\ep}} \propto e_\ep^{(r)}\bspn\al \propto e_\ep^{(r)}\bspn\be\propto\bspn{\be^{-\ep}}.\qedhere
\]
\end{proof}

\subsection{\texorpdfstring{\tbcs}{3-bar-cores}}

It is easy to see that a nonempty \stp is a \tbc \iff it has the form $(3l-1,3l-4,3l-7,\dots,2)$ or $(3l-2,3l-5,3l-8,\dots,1)$ for some $l\gs1$. So \tbcs are parametrised by the residue of their parts modulo \(3\) and their length: we write \(\thrc{r}{l}\) for the unique \tbc with parts congruent to \(r \in \set{1,2}\) modulo \(3\) and of length \(l \geq 1\) given by
\begin{align*}
\thrc{r}{l} &= 
    (3(l-1)+r,\ 3(l-2) + r,\ \ldots,\ 3+r,\ r) 
\end{align*}
and we write \(\thrc{2}{0} = \emptyset\) (using \(r=2\) rather than \(r=1\) for the empty \tbc means that \(\thrc{r}{l}\) always has addable nodes of residue \(2-r\)).

For example, two \tbcs are drawn as Young diagrams below, with the \spr of each node indicated. 
\Yvcentermath1
\[
\thrc{1}{5} = \young(0100100100100,0100100100,0100100,0100,0)
\quad
\thrc{2}{3} = \young(01001001,01001,01)
\]
\Yvcentermath0

\subsection{The abacus}
\label{subsec:abacus}

The abacus we use for \stps was introduced by Yates in \cite{yates}, and differs from the abacus notation used in earlier literature.
It retains the feature that a strict partition is encoded by placing beads on positions corresponding to the sizes of its parts, but additionally assigns meaning to the negative positions, and gives a convenient way to describe the core and quotient.
This abacus can be defined for all odd $p$, but for simplicity we stick to the case $p=3$.

We take an abacus with three infinite vertical runners, labelled $-1,0,1$. We mark positions on the runners corresponding to non-zero integers, with the positions on runner $i$ corresponding to the non-zero integers congruent to $i$ modulo $3$ increasing from top to bottom, with position $a+1$ directly to the right of position $a$ when $a\equiv0$ or $-1\ppmod3$. We do not mark a position $0$; we put $\times$ where position $0$ would be. So the positions are labelled as follows:
\[
\begin{tikzpicture}[scale=.9]
\foreach\x in{-1,0,1}\draw(\x,-2.5)--(\x,2.5);
\foreach\x in{-1,0,1}\draw[loosely dotted,thick](\x,2.7)--++(0,.6);
\foreach\x in{-1,0,1}\draw[loosely dotted,thick](\x,-2.7)--++(0,-.6);
\draw(-1,2)node[fill=white]{$-7$};
\draw(0,2)node[fill=white]{$-6$};
\draw(1,2)node[fill=white]{$-5$};
\draw(-1,1)node[fill=white]{$-4$};
\draw(0,1)node[fill=white]{$-3$};
\draw(1,1)node[fill=white]{$-2$};
\draw(-1,0)node[fill=white]{$-1$};
\draw(0,0)node[fill=white]{$\times$};
\draw(1,0)node[fill=white]{$1$};
\draw(-1,-1)node[fill=white]{$2$};
\draw(0,-1)node[fill=white]{$3$};
\draw(1,-1)node[fill=white]{$4$};
\draw(-1,-2)node[fill=white]{$5$};
\draw(0,-2)node[fill=white]{$6$};
\draw(1,-2)node[fill=white]{$7$};
\end{tikzpicture}
\]
Positions $3r-1,3r,3r+1$ are said to be in the $r$th \emph{row} of the abacus. We define the \emph{positive half} of a runner to be the part containing the positive positions.

Now given a \stp $\al$, the \emph{\abd} for $\al$ is obtained by placing a bead on the abacus at position $\al_r$ for every $r$, and placing a bead at position $-s$ for every $s>0$ which is \emph{not} a part of $\al$. In an \abd we say that a position is \emph{occupied} if it contains a bead, or \emph{empty} otherwise. An empty position will also be called a \emph{gap}. The construction means that position $s$ is occupied \iff position $-s$ is empty (this is why we do not define position $0$).

For example, the \abd for $(10,8,7,3,1)$ is given as follows (with all the positions above those shown taken to be occupied, and all those below empty).
\[
\abacus(bbb,nbn,nbb,bnb,nxb,nbn,nnb,bnb,nnn)
\]

\Abds are useful for visualising the removal of $3$-bars. Removing a $3$-bar corresponds to a sequence of moves of one of two forms: sliding a pair of beads in positions $r,3-r$ up one position each into unoccupied positions $r-3,-r$; or moving a bead from position $3$ to position $-3$. As a consequence, the \abd for the \tbc of a \stp is obtained by sliding beads up their runners as far as they will go.
For the example above where \(\al=(10,8,7,3,1)\), sliding beads upwards gives
\[
\abacus(bbb,bbb,nbb,nxb,nnb,nnn,nnn)
\]
and so the \tbc of \(\al\) is $(4,1)$.

Given any \stp $\al$, we define $\delt\al$ to be the number of parts of $\al$ congruent to $1 \ppmod{3}$ minus the number of parts congruent to $2 \ppmod{3}$; in terms of the abacus, \(\delt\al\) is the number of beads in positive positions on runner~$1$ of the \abd for $\al$ minus the number of beads in positive positions on runner~$-1$. 
It is easy to see (either from the abacus or directly from the definition of $3$-bars) that if $\ga$ is the \tbc of $\al$ then $\delt\ga=\delt\al$.
Moreover, different \tbcs give different values of $\delt\ga$ (specifically, \(\delt{\thrc{1}{l}} = l\) and \(\delt{\thrc{2}{l}} = -l\)), so $\delt\al$ determines the \tbc of $\al$.

We define the \emph{largest} runner on the \abd for $\al$ to be runner $1$ if $\delt\al>0$ (i.e.~if \(\al\) has \tbc \(\thrc{1}{l}\) for some \(l\geq1\)), or runner $-1$ if $\delt\al\ls0$ (i.e.~if \(\al\) has \tbc \(\thrc{2}{l}\) for some \(l\geq0\)).
The runner opposite the largest runner is called the \emph{smallest} runner.

Now we can define the \emph{\tbq} of a \stp $\al$. This is an ordered pair $(\alp0,\alp1)$ of partitions, with $\alp0$ being strict. Construct the \abd for $\al$ as explained above.
Then:
\begin{itemize}
    \item $\alp0$ is defined by listing the positive occupied positions on runner $0$ (that is, the parts of $\al$ divisible by $3$) and dividing them by $3$;
    \item $\alp1$ is defined by examining the largest runner and setting $\alp1_r$ to be the number of gaps above the $r$th lowest bead on this runner (that is, we read the largest runner as an abacus in the sense commonly used for partitions, as described for example in \cite{jameskerber1984reptheory}).
\end{itemize}
For example, from the \abd above for $\al=(10,8,7,3,1)$, we see that $\alp0=(1)$, while $\alp1=(2,2,1,1,1)$ (with runner $1$ being largest).

It is easy to see that $\al$ is determined by its \tbc and \tbq: to construct the \abd for $\al$, we take the \abd for the \tbc of $\al$, and then slide beads down on the largest runner in accordance with $\alp1$, with corresponding moves on the smallest runner made so that position $r$ is occupied \iff position $-r$ is empty. On runner $0$ the beads in positive positions (and the corresponding gaps in negative positions) correspond to the parts of $\alp0$. Moreover, this construction shows that for any \tbc $\ga$, any \stp $\alp0$ and any partition $\alp1$, there is a \stp $\al$ with \tbc $\ga$ and \tbq $(\alp0,\alp1)$. We write this \stp as $\corandquot\ga{\alp0}{\alp1}$. From the above description of the effect of removing $3$-bars on the abacus we see that the \tbw of $\corandquot\ga{\alp0}{\alp1}$ is $|\alp0|+|\alp1|$.

\Abds are also useful for visualising addition and removal of nodes. The next two lemmas follow easily from the definitions.

\begin{lemma}
\label{lemma:node_removal_bead_move_correspondence}
Suppose $\al,\be\in\scrd$, and that $\be$ is obtained from $\al$ by removing a $0$-node.
Then the \abd for $\be$ is obtained from the \abd for $\al$ either by simultaneously moving two beads on runners $0$ and $1$ one place to the left (from positions \(\al_r\) and \(1-\al_r\) to positions \(\al_r-1\) and \(-\al_r\)), or by moving a bead from position $1$ to position $-1$.
Hence the \abd for $\al^{-0}$ is obtained by moving all beads as far to the left as possible.
\end{lemma}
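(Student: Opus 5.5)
We argue directly from the definitions of residue, \spre node and \abd. Fix a node $(r,c)$ of $\al$ of residue $0$, so $c\equiv0$ or $1\pmod 3$, and suppose that removing it, possibly together with other $0$-nodes, leaves the strict partition $\be$. Removing a single node from row $r$ replaces the part $\al_r=c$ by $c-1$. The case analysis therefore splits according to the value of $\al_r$.

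\emph{Case $\al_r\geq2$.} Here $c=\al_r\equiv0$ or $1\pmod 3$. On the abacus we must move the bead at position $\al_r$ to position $\al_r-1$. If $\al_r\equiv1$, this moves a bead from runner $1$ to runner $0$ in the same row. If $\al_r\equiv0$, it moves a bead from runner $0$ to runner $-1$, and position $\al_r-1$ is immediately to the left of $\al_r$.

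\begin{itemize}
\item Position $\al_r-1$ must be empty, i.e.\ $\al_r-1\notin\al$. If it were occupied, we would also have to remove a node from the row of length $\al_r-1$. That node sits in column $\al_r-1$, whose residue is $1$ in the case $\al_r\equiv0$, and in the case $\al_r\equiv1$ this would produce equal parts.
\item The complementary positions $-\al_r$ and $1-\al_r$ change simultaneously, because position $s$ is occupied \iff $-s$ is empty. Since $\al_r$ loses its bead and $\al_r-1$ gains one, position $-\al_r$ gains a bead and $1-\al_r$ loses one.
\end{itemize}

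Positions $1-\al_r$ and $-\al_r$ are adjacent in the same row of the abacus, with $1-\al_r$ to the right. The paired negative move is therefore also one step to the left, and it lies on the runners $\{0,1\}$ or $\{-1,0\}$ complementary to the positive move. Working out both residues mod $3$ gives exactly the description in the statement: two beads, at positions $\al_r$ and $1-\al_r$, each move one step left into positions $\al_r-1$ and $-\al_r$.

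\emph{Case $\al_r=1$.} Removing the node deletes the part $1$. This moves the bead from position $1$ to the gap at $-1$, which is allowed since $1\in\al$ means $-1$ is empty.

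When several $0$-nodes are removed at once, each lies in a different row. The moves above involve disjoint pairs of positions, so they compose. To deduce the statement about $\al^{-0}$, note two facts.
\begin{itemize}
\item Each \esprm0 corresponds to a bead that can legally make one of these left moves.
\item Conversely, any bead that could move left in this way gives a removable $0$-node.
\end{itemize}
Hence removing all removable $0$-nodes is the same as performing every available leftward move simultaneously. Each bead moves at most one step, across the boundary $0\to-1$ or $1\to0$ within a row. The main point to check carefully is consistency: after these moves no further leftward move is possible. This holds because a bead that has moved cannot move again. Its new position is on runner $0$ with runner $-1$ to its left, or on runner $-1$. In the first case the $-1$ position is either occupied, or it was already available and that bead moved too. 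So the result is the configuration with all beads pushed as far left as possible. This bookkeeping of which pairs are blocked is the only slightly delicate step; the rest is a residue check mod~$3$.
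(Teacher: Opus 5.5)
Your analysis of a single $0$-node removal is correct. Under the lemma's actual hypothesis (exactly one node is removed), $\alpha_r-1\notin\alpha$ is simply strictness of $\beta$. Drop the ``possibly together with other $0$-nodes'' framing: there, your remark that an occupied position $\alpha_r-1$ ``would produce equal parts'' when $\alpha_r\equiv1$ is false.

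The genuine gap is the deduction for $\alpha^{-0}$, where several of your bookkeeping claims fail.
\begin{itemize}
\item Removable $0$-nodes need not lie in different rows. A part $3k+1$ with $3k,3k-1\notin\alpha$ loses both $(r,3k+1)$ and $(r,3k)$; in the paper's example $(7,5,4,1)$, both $(3,4)$ and $(3,3)$ are removable. So a bead can travel two steps, from runner $1$ to runner $-1$, and ``a bead that has moved cannot move again'' is false. For $\alpha=(4)$ we have $\alpha^{-0}=(2)$, but your single simultaneous round of moves gives $(3)$.
\item The single moves are not on disjoint positions. For $\alpha=(4,3)$, the moves for $(2,3)$ and $(1,4)$ both involve positions $3$ and $-3$, and the second becomes legal only after the first. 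Your round of moves therefore produces $(4,2)$ rather than $\alpha^{-0}=(3,2)$.
\item Your claim that each removable $0$-node corresponds to a bead that can legally make one of these moves is false. Removability allows removal together with other $0$-nodes, so a removable node need not be removable on its own.
\end{itemize}

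To repair this, first note that removing $0$-nodes never takes a part out of its abacus row $\{3k-1,3k,3k+1\}$. This is because column $3k-1$ has residue $1$, and in row $0$ the part $1$ simply disappears. So each positive row can be treated separately, with row $-k$ following from row $k$ via $s\leftrightarrow -s$. You can then either check the eight configurations of a row directly, or proceed as follows.
\begin{itemize}
\item Iterate your single-node moves until none is available. A row in which no bead has a gap immediately to its left is left-justified, so the terminal configuration is left-justified in every row.
\item Every node removed along the way is a removable $0$-node, since the removed set leaves a strict partition.
\item Conversely, suppose removing a set $T$ of $0$-nodes leaves a strict partition. In a row $k\geq1$ containing $m$ positive beads, the new parts $q_1>\dots>q_m$ all remain at least $3k-1$. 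Hence $q_j\geq 3k-1+m-j$, which is the left-justified value.
\item Therefore $T$ is contained in the set of nodes removed by left-justification. So left-justification removes exactly the removable $0$-nodes, and produces $\alpha^{-0}$.
\end{itemize}
The paper itself only says this lemma ``follows easily from the definitions''; some such row-by-row argument is what is needed.
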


\begin{lemma}
\label{lemma:pos_of_gap_and_no_of_beads}
Suppose \(\al\in\scrd\) has nonempty \tbc \(\thrc{r}{l}\).
Then, in the \abd for \(\al\), the first gap on the largest runner is in row \(l-\len{\alp1}+r-1\).
\end{lemma}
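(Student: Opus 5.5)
The plan is to compute the first gap on the largest runner for the \tbc itself, and then track how the bead configuration on that runner changes when we pass from $\thrc{r}{l}$ to $\al=\corandquot{\thrc{r}{l}}{\alp0}{\alp1}$ using the construction in \cref{subsec:abacus}.

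\emph{Step 1 (the core).} Since $l\geq1$, we have $\delt{\thrc{r}{l}}=\pm l\neq0$, so the largest runner is runner $1$ if $r=1$ and runner $-1$ if $r=2$.
\begin{itemize}
\item For $r=1$, the positive positions on runner $1$ are $1,4,\dots$. The occupied ones are exactly the parts $1,4,\dots,3l-2$, which lie in rows $0,\dots,l-1$. The negative positions on runner $1$ are $-2,-5,\dots$. Position $-s$ is occupied \iff $s$ is not a part, and $2,5,\dots$ are never parts of $\thrc{1}{l}$. Hence runner $1$ is occupied exactly in rows $\leq l-1$, and the first gap is in row $l=l+r-1$.
\item For $r=2$, the same check applies. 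The parts $2,\dots,3l-1$ lie in rows $1,\dots,l$ of runner $-1$. The negative positions $-1,-4,\dots$ lie in rows $0,-1,\dots$, and they are all occupied because $1,4,\dots$ are not parts. So the first gap is in row $l+1=l+r-1$.
\end{itemize}
In both cases the largest runner of the core is the "empty partition" configuration: all rows up to $l+r-2$ are occupied and all lower rows are empty.

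\emph{Step 2 (sliding beads).} By construction, the \abd of $\al$ is obtained from that of the core by sliding beads down the largest runner according to $\alp1$. The moves on runner $0$ and on the smallest runner do not affect this runner. Sliding beads down a runner preserves the charge of that runner, i.e.\ the bead configuration is that of the partition $\alp1$ read off the runner with the same reference point as the core. Concretely, if we number the beads from the bottom, the $i$th lowest bead sits in row $(l+r-1-i)+\alp1_i$. Those with $\alp1_i=0$, namely $i>\len{\alp1}$, therefore occupy exactly the rows $\leq l+r-2-\len{\alp1}$. The $\len{\alp1}$ beads with $\alp1_i>0$ all have at least one gap above them, so the row immediately after this contiguous block is empty. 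Hence the first gap on the largest runner is in row $l+r-1-\len{\alp1}$, as claimed.

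The only point needing care, and the main potential obstacle, is justifying the charge-preservation claim in Step 2. That is, we must check that the description "$\alp1_i$ is the number of gaps above the $i$th lowest bead" pins down the bead positions with the same normalisation as the core. I would verify this either directly from the construction of $\corandquot\ga{\alp0}{\alp1}$, where beads are only slid down the runner, or by noting that removing $3$-bars slides beads up their runners without changing the number of beads above any sufficiently high row. Everything else is a routine check of residues mod $3$.
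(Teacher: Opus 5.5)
Your proof is correct and is essentially the routine verification the paper intends; the paper gives no proof and says only that the lemma follows easily from the definitions. You check that the core's largest runner is full exactly through row $l+r-2$. Then, since passing from the core to $\al$ only slides beads along runners $\pm1$, the $i$th lowest bead lies in row $l+r-1-i+\alp1_i$. This is the same bead-counting the paper uses in the proof of \cref{lemma:weakly_3-separated-form}. The charge-preservation point you flag is immediate, because each $3$-bar removal moves beads on runners $\pm1$ up by one row without changing which runner they are on.
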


We end this section by using the abacus to describe \stps whose \tbc is relatively large.

\begin{lemma}
\label{lemma:weakly_3-separated-form}
Let \(\al \in \scrd\) with \tbc \(\thrc{r}{l}\) and \tbq \((\alp0, \alp1)\). Suppose \(l\geq \len{\alp1}\).
Then:
\begin{enumerate}[(i)]
\item\label{weaksep}
\(
    \al = (\thrc{r}{l} + 3\alp1) \sqcup 3\alp0
\);
\item\label{largebar}
if \(l \geq 1\), then \(\al\) has a unique largest bar of length congruent to \(r\) modulo $3$, given by the sum of the largest part of \(\al\) divisible by $3$ 
and the largest part of \(\al\) not divisible by $3$.
\end{enumerate}
\end{lemma}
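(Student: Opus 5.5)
We work directly with the abacus description of \cref{subsec:abacus}. Let \(\ga=\thrc{r}{l}\). The abacus for \(\al\) is obtained from that for \(\ga\) in three steps: beads on the largest runner are slid down according to \(\alp1\); the smallest runner is adjusted so that position \(s\) is occupied \iff \(-s\) is empty; and beads are placed on runner \(0\) at positions \(3\alp0_i\). The parts of \(\al\) divisible by \(3\) are exactly \(3\alp0\) by definition, so for (i) it suffices to show that the parts of \(\al\) not divisible by \(3\) are \(\thrc{r}{l}+3\alp1\).

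\textbf{Step 1: locate the relevant beads.} In the abacus for \(\ga\) the largest runner (runner \(1\) if \(r=1\), runner \(-1\) if \(r=2\), since \(l\ge1\)) has positive beads at \(r,r+3,\dots,3(l-1)+r\). All negative positions on it are also occupied. The smallest runner has beads at every negative position except the mirrors of these, and it has no positive beads. The \(j\)th lowest bead on the largest runner sits at \(3(l-j)+r\) for \(j\le l\). The beads above those lie at negative positions.

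\textbf{Step 2: apply \(\alp1\).} Reading the runner as a James abacus, \(\alp1_j\) counts gaps above the \(j\)th lowest bead. Replacing \(\ga\) by \(\al\) therefore moves the \(j\)th lowest bead down by \(\alp1_j\) rows. Since \(\len{\alp1}\le l\), only the beads with \(j\le l\) move, and these are exactly the positive beads. The new positions are \(3(l-j)+r+3\alp1_j\), which are positive. The beads at negative positions on the largest runner stay put.

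The smallest runner is forced by the mirror condition. Its positive positions are the negatives of the gaps on the largest runner's negative half. That negative half remains fully occupied, so the smallest runner has no positive beads. Hence the parts not divisible by \(3\) are precisely \(\thrc{r}{l}_j+3\alp1_j\), which gives (i). I expect this step to be the main point needing care: the hypothesis \(l\ge\len{\alp1}\) is used exactly to prevent the negative beads on the largest runner from moving.

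\textbf{Step 3: prove (ii).} A bar has length \(\equiv r \pmod 3\) in one of three ways: a single part \(\equiv r\); a pair of parts with sum \(\equiv r\); or a partial bar of length \(\equiv r\).

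By (i), every part not divisible by \(3\) is \(\equiv r\). So a pair of parts with sum \(\equiv r\) must consist of one part \(\equiv r\) and one part \(\equiv0\). The largest such pair is \(M+N\), where \(N\) is the largest part not divisible by \(3\) and \(M\) is the largest part divisible by \(3\). This needs \(M\) to exist; if there is none, compare with the single parts and partial bars instead.

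Any single part or partial bar has length at most the largest part of \(\al\), which is \(\max(M,N)\). This is strictly less than \(M+N\) when both exist, and \(N\) exists because \(l\ge1\). Any other pair is strictly smaller than \(M+N\), since the parts of \(\al\) are distinct. So the largest bar of length \(\equiv r\) is \(M+N\), and it is unique.

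One edge case needs checking: when \(\alp0=\emptyset\), \(M\) does not exist. The statement implicitly presumes a part divisible by \(3\), so I would verify whether the intended reading requires \(\alp0\neq\emptyset\), or else interpret \(M=0\). With \(M=0\) the largest bar is the part \(N\) itself, and it is still unique.
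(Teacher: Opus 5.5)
Your proof is correct and follows essentially the same route as the paper's. You read the largest runner as a James abacus and use \(l\geq\len{\alp1}\) to keep its negative half full. Hence the smallest runner has no positive beads, and the \(i\)th lowest positive bead gives the part \((\thrc{r}{l})_i+3\alp1_i\). For (ii), all parts are \(\equiv 0\) or \(r\) modulo \(3\), so the only pair bars of length \(\equiv r\) combine a part divisible by \(3\) with one that is not, and the largest such pair beats every other bar. Your treatment of the case \(\alp0=\emptyset\) is a small edge case the paper's proof passes over. Reading the largest part divisible by \(3\) as \(0\) there, so that the bar is just the largest part, matches how \cref{lemma:bar_removal_calulation} later uses the lemma.
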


\begin{proof}
\begin{enumerate}[beginthm,label=(\roman*)]
\item
We give the proof for the case \(r=1\); the case \(r=2\) is similar (with runners \(1\) and \(-1\) swapped).
The parts of \(\al\) divisible by $3$ are encoded precisely as \(3\alp0\) for any \stp \(\al\).
Meanwhile, \(\alp1\) is encoded on runner \(1\) with exactly \(l\) beads on the positive half and no gaps in the negative half (because \(l \geq \len{\alp1}\)).
This arrangement determines the \abd for \(\al = \corandquot{\thrc{1}{l}}{\alp0}{\alp1}\), having no beads on the positive half of runner~\(-1\). Moreover, on the positive half of runner~\(1\), the \(i\)th lowest bead must lie in row \(l-i+\alp1_i\) (because above the \(i\)th bead there must be \(l-i\) beads and \(\alp1_i\) gaps). Hence this bead corresponds to a part of size \(3(l-i+\alp1_i)+1 = (\thrc{1}{l})_i + 3\alp1_i\) as required.
\item
The expression from part \Cref{weaksep} implies that every part of $\al$ is congruent to either $0$ or $r$ modulo $3$, and the assumption \(l\geq 1\) guarantees that there is a at least one part congruent to $r$.
So a bar given by the sum of two parts of $\al$ has length $r \ppmod{3}$ \iff one of the two parts is divisible by $3$ and the other is not. Clearly taking the two largest such parts gives the largest such bar, and this is obviously larger than any bar which is a subset of a single part.
\qedhere
\end{enumerate}
\end{proof}

\section{Reduction to averaged characters of \texorpdfstring{\(\hsss n\)}{the double cover of Sn}}
\label{sec:reduction_to_averaged_chars}

The goal of this section is to reduce our main theorems (\Cref{thm:mainS,thm:mainA}) to the following statement about averaged characters of \(\hsss n\). 

\begin{theorem}
\label{thm:main_reduced}
Suppose $\al,\be\in\scrd(n)$ are distinct. Then \(\spn{\al}\) and \(\spn{\be}\) are proportional in characteristic \(3\) if and only if \(\al\) and \(\be\) have the same \tbc and there exist \tcps \(\rho\), \(\sigma\) and \(\tau\) such that:
\begin{align*}
\alp0 &= \rho + \sigma, & \bep0 &= \rho + \tau, \\
\alp1 &= \tau,          &     \bep1 &= \sigma.
\end{align*}
In this case $\br{\spn\al}=2^{(\len{\rho+\si}-\len{\rho+\tau})/2}\br{\spn\be}$.
\end{theorem}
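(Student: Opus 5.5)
This is the main theorem in reduced form, so the plan mirrors the outline in the introduction: a reduction step, then an independence-of-core step, then separate `if' and `only if' arguments.

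\emph{Necessity of the same core.} If $\br{\spn\al}\propto\br{\spn\be}$ with both nonzero, then $\spn\al$ and $\spn\be$ lie in the same $3$-block. Since the two characters are not identically zero on $3$-regular classes, their Brauer characters share composition factors. By the block classification this forces $\al$ and $\be$ to have the same content, and hence the same \tbc (Morris--Yaseen).

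\emph{Reduction to a fixed small core.} Next I would show that proportionality depends only on the \tbqs, not on the core. The idea is to compare a block with core $\thrc{r}{l}$ to one with core $\thrc{r}{l+1}$ or $\thrc{2-r}{l\pm1}$ by applying the divided powers $f_\ep^{(k)}$ and $e_\ep^{(k)}$ to the whole block.
\begin{itemize}
\item By \cref{spinbranch} and \cref{lemma:node_removal_bead_move_correspondence}, when the core is large enough these functors act on abacus displays by sliding all beads between two runners. This sends $\spn{\corandquot{\ga}{\mu}{\nu}}$ to $2^{c}\,\spn{\corandquot{\ga'}{\mu}{\nu}}$, where $c$ depends only on the core, or on easily controlled data.
\item Because these functors commute with $3$-modular reduction and preserve proportionality, this gives a bijection on proportional pairs.
\item \cref{prop:restriction_gives_prop_pair} supplies the converse direction.
\end{itemize}
The resulting power-of-$2$ factors must be tracked, since they feed into the constant in the statement. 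After this step we may assume the core is huge, that is $l\ge\len{\alp1},\len{\bep1}$. This is the RoCK situation, and \cref{lemma:weakly_3-separated-form} gives the explicit shapes $(\thrc rl+3\alp1)\sqcup3\alp0$.

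\emph{`If' direction.} In the RoCK block, decomposition numbers are known: the decomposition matrix factorises in terms of Littlewood--Richardson-type data. Runner $0$ contributes a factor behaving like spin characters of weight $|\alp0|$ in characteristic $2$ (via $\alp0\mapsto$ $2$-regular data), and the largest runner contributes ordinary partitions $\alp1$. I would use the known characteristic-$2$ proportionality results (from \cite{fayersmcd2025spintospecht}), which involve \tcps. That result explains why $\rho+\si$ on one runner can be traded against $\si$ on the other. The row of $\spn\al$ then equals, up to the stated power of $2$ coming from lengths via \cref{spinbranch} constants, the row of $\spn\be$.

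\emph{`Only if' direction.} I would argue by induction on $n$ (or on weight). Given a proportional pair, apply \cref{prop:weight_and_remove_bars_from_prop_pair} with $k=1,5,7,\dots$ and \cref{prop:restriction_gives_prop_pair} to produce proportional pairs of smaller size. By induction these have the stated \tcp form, and the goal is to lift this structure back to $\al,\be$. Equality of the numbers of removable and addable nodes gives strong abacus constraints. The unique largest bar from \cref{lemma:weakly_3-separated-form}\cref{largebar}, combined with Morris's rule, pins down the largest parts.

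I expect the main obstacle to be this `only if' induction. The difficulty is showing that the smaller proportional pairs can only glue back consistently when the quotients are built from $2$-cores $\rho,\si,\tau$, and handling the small base cases where restriction loses too much information.
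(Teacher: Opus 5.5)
\textbf{The reduction to a large core fails as you propose it.} Your overall plan (same core, independence of the core, RoCK blocks for `if', induction for `only if') is the paper's, but single divided powers cannot carry out the core step.

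A single divided power $f_\ep^{(k)}$ or $e_\ep^{(k)}$ generally sends $\spn\al$ to a sum of several characters. It acts by a bijection on the block only in the Scopes situation, where no partition in the block has a removable (resp.\ addable) $\ep$-node. For spin $3$-blocks this is essentially the RoCK condition. For the step $\thrc{2}{l-1}\to\thrc1l$ via $f_0^{(2l-1)}$, the partition $\corandquot{\thrc{2}{l-1}}{(w)}{\emptyset}=\thrc{2}{l-1}\sqcup(3w)$ has a removable $0$-node whenever $w\gs l$. The smallest instance is already a sum: $f_0\spn{(3)}=\sqrt2\spn{(4)}+\spn{(3,1)}$. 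The cores form a chain $\emptyset=\thrc20,\thrc11,\thrc21,\thrc12,\dots$, and any route from a core of length $<w$ to a RoCK core must take the step $\thrc{2}{w-1}\to\thrc1w$. So your moves never connect a non-RoCK core to a RoCK one.

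Off the Scopes range, proportionality of the images says nothing about individual characters. \cref{prop:restriction_gives_prop_pair} is no substitute: $\al\mapsto\al^{-\ep}$ changes the quotient, and its target block depends on $\al$. Yet you need every core. The `if' statement concerns all blocks, and the `only if' induction must place quotients on small cores (the paper uses $\thrc1l$ with $l=\max\{\alp0_1+\len{\alp1},\bep0_1+\len{\bep1}\}$, and $\emptyset$).

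The missing idea is the runner-swapping function $\runnerswap{\ep}{c}=\sum_a(-1)^{a+c}f_\ep^{(a+c)}e_\ep^{(a)}$ with $c=\netspinaddables{\ep}{\ga}$. In this alternating sum, the terms that remove nodes and add them back cancel. Hence for \emph{every} core $\ga$ it sends $\spn{\corandquot{\ga}{\alp0}{\alp1}}$ to $\pm\spn{\corandquot{\ga'}{\alp0}{\alp1}}$, where $\ga'$ is the adjacent core; for example $\runnerswap01\spn{(3)}=\spn{(3,1)}$. (For $\ep=1$, each $1$-node carries a factor $\sqrt2$ in \cref{spinbranch}, so the cancellation needs the rescaled operators $2^{-k/2}e_1^{(k)}$ and $2^{-k/2}f_1^{(k)}$. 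Without rescaling one gets $\runnerswap10\spn{(4,2)}=-\spn{(4,2)}-2\spn{(5,1)}$.)

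\textbf{Your `only if' part lists tools but is not yet an argument.} The induction runs because the core is chosen so that each tool acts cleanly on the quotient.
\begin{itemize}
\item Take $\thrc1l$ with $l\gs\len{\alp1},\len{\bep1}$ and $l\equiv\max\{\alp0_1+\alp1_1,\bep0_1+\bep1_1\}+1\ppmod2$. Then the unique largest bar has length $\equiv1\ppmod6$, and removing it gives quotients $(\rowrem{\alp0},\rowrem{\alp1})$.
\item Take $\thrc1l$ with $l=\max\{\alp0_1+\len{\alp1},\bep0_1+\len{\bep1}\}$; counting addable $0$-nodes shows both pairs attain this maximum. Then $\al^{-0}=\corandquot{\thrc2l}{\rowrem{\alp0}}{\colrem{\alp1}}$.
\end{itemize}
Comparing the two inductive conclusions (with a separate count of removable $0$-nodes when one of them gives equality) forces $\alp1,\bep1$ to be \tcps, with $\alp0=\rho+\si+(d)$ and $\bep0=\rho+\tau+(d)$. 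The bad values of $d$ are then excluded on the empty core. One shows either that $(\be^{-0})^{(1)}$ is not a \tcp, or that $\al$ and $\be$ have different numbers of removable $0$-nodes. Your choice $k=1$ gives nothing, since removing all $1$-bars leaves $\emptyset$.

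Finally, in the `if' part the power of $2$ does not come from branching coefficients. It is the factor $2^{\len{\alp0}/2}$ in the RoCK decomposition numbers. The characteristic-$2$ result enters as the identity $P_{\si+\tau}=s_\si s_\tau$ for \tcps. This makes each RoCK row equal to $2^{\len{\alp0}/2}$ times data determined by $P_{\alp0}s_{{\alp1}'}=s_\rho s_\si s_\tau$.
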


We arrive at this reduction by considering the differences between pairs of associate or conjugate characters. We begin by noting the following facts about such characters from \cite[Theorem 8.7]{hoffmanhumphreys1992projreps}.
Given a strict partition \(\al\), we write \(\sspn\al\) for an irreducible character of \(\hsss n\) labelled by \(\al\) (that is, \(\sspn\al = \lan\al\ran\) if \(\al\) is even and \(\sspn\al=\lan\al\ran_\pm\) if \(\al\) is odd), and similarly write \(\aspn\al\) for an irreducible character of \(\haaa n\) labelled by \(\al\).

\newlength\algn\settowidth\algn{ if \(g\) projects to cycle type \(\al\) (in which case \(\al\) is even);}

\begin{lemma}
\label{lemma:char_values}
Let \(\al\in\scrd(n)\).
Let $\sspn\al$ be an irreducible character of $\hsss n$
and $\aspn\al$ an irreducible character of~$\haaa n$.
\begin{enumerate}[itemsep=1pt]
\item\label{item:values_of_associate_pairs}
    Let \(g \in \hsss n\).
    Then
    \begin{align*}
\sspn{\al}(g) &=
    \begin{cases}
        -\ass{\sspn{\al}}(g) \neq 0 & \text{ if \(g\) projects to cycle type \(\al\) and \(\al\) is odd;} \\
        \ass{\sspn{\al}}(g) & \text{ otherwise.}
    \end{cases}
\intertext{\item\csname cref@label\endcsname{item:values_of_conjugate_pairs} 
    Let \(g \in \haaa n\).
    Then}
\aspn{\al}(g) &=
    \begin{cases}
        \conj{\aspn{\al}}(g) + x 
            & \text{ if \(g\) projects to cycle type \(\al\) (in which case \(\al\) is even),} \\
       \conj{\aspn{\al}}(g) & \text{ otherwise,}
    \end{cases}
\intertext{for some \(x \not\in \Q\) (depending on \(\al\)).
\item\csname cref@label\endcsname{item:chars_on_odd_perms}
    Let \(g \in \hsss n \setminus \haaa n\).
    Then}
    \sspn{\al}(g) &=
    \begin{cases}
        - \ass{\sspn{\al}}(g) \neq 0 & \text{ if \(g\) projects to cycle type \(\al\) (in which case \(\al\) is odd);} \\
       0 & \text{ otherwise.}
    \end{cases}
    \end{align*}
    In any case, \(\spn{\al}(g) = 0\).
\end{enumerate}
\end{lemma}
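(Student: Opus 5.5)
This lemma collects standard facts, and I would derive each part from Schur's description of spin character values as presented in \cite[Theorem~8.7]{hoffmanhumphreys1992projreps}, together with the restriction relations \eqref{restrictionofspin} and Clifford theory for the index-two subgroup $\haaa n\le\hsss n$.

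For part~\ref{item:values_of_associate_pairs}, note first that $\ass{\sspn\al}(g)=\operatorname{sgn}(g)\sspn\al(g)$. If $g\in\haaa n$ the two values agree, so only $g\in\hsss n\setminus\haaa n$ needs attention. There the statement reduces to part~\ref{item:chars_on_odd_perms}: $\ass{\sspn\al}(g)=-\sspn\al(g)$, and this is nonzero exactly when $g$ projects to cycle type $\al$. In particular, when $\al$ is even, $\sspn\al=\ass{\sspn\al}$ forces $\sspn\al(g)=0$ on odd elements.

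So part~\ref{item:chars_on_odd_perms} is the substantive input, and I would quote it directly from Schur's theorem. A spin character vanishes on every odd element unless $g$ projects to cycle type $\al$ with $\al$ odd. Note that an odd permutation of cycle type $\al$ forces an odd number of even parts, so $\al$ is odd. In that case $\lan\al\ran_\pm(g)=\pm i^{(n-\len\al+1)/2}\sqrt{z_\al/2}\neq0$, with opposite signs for the two associates. The final assertion $\spn\al(g)=0$ then follows: for $\al$ even, $\spn\al=\lan\al\ran$ vanishes; for $\al$ odd, $\lan\al\ran_+(g)+\lan\al\ran_-(g)=0$ because the two values are opposite.

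For part~\ref{item:values_of_conjugate_pairs}, I would argue as follows. If $\al$ is odd, then $[\al]=\lan\al\ran_\pm\res{\haaa n}$ is self-conjugate, so the claim holds trivially; indeed, cycle type $\al$ with $\al$ odd is not even in $\aaa n$. If $\al$ is even, then $[\al]_+ + [\al]_-=\lan\al\ran\res{\haaa n}$, and the difference $[\al]_+-[\al]_-$ is supported on classes of $\haaa n$ that split under $\hsss n$-conjugacy. By Schur's theorem, the difference is nonzero only on elements projecting to cycle type $\al$. There its value is $\pm i^{(n-\len\al)/2}\sqrt{z_\al}$ (up to the normalisation in \cite{hoffmanhumphreys1992projreps}), with $z_\al=\prod\al_j$. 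Taking $x$ to be this difference gives the required form.

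The only real obstacle is checking that $x\notin\Q$. I would verify this from the explicit formula: $\sqrt{\prod\al_j}$ is either irrational, or, when the product is a square, it is multiplied by $i^{(n-\len\al)/2}$. The hard case is when this power of $i$ is real. Here I would rely on the precise form of Schur's formula, in which the value is $\tfrac12\bigl(\epsilon\pm\sqrt{\epsilon\prod\al_j}\bigr)$-type with $\epsilon=\pm1$. Alternatively, I would simply cite \cite[Theorem~8.7]{hoffmanhumphreys1992projreps} for the irrationality of these values, since the distinct conjugate characters $[\al]_\pm$ must take values that are not rational on the class where they differ. This holds because rational values would make them Galois-fixed and equal on that class.
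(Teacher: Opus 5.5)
Your treatment of parts (i) and (iii) follows the paper's approach, which simply quotes Schur's theorem \cite[Theorem~8.7]{hoffmanhumphreys1992projreps}. So does your identification in part (ii) of $x=[\al]_+(g)-[\al]_-(g)=\pm i^{(n-\len{\al})/2}\sqrt{\al_1\al_2\cdots}$ via Clifford theory.

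The gap is the final step, $x\notin\Q$. Your fallback argument is a non sequitur. If $[\al]_+$ and $[\al]_-$ were rational-valued, each would be fixed by every Galois automorphism. That gives no reason for two \emph{different} characters to agree on any class.

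Worse, the ``hard case'' you isolate really occurs, and it gives rational $x$. For $\al=(9)$ we have $n-\len{\al}=8$, so $x=\pm i^{4}\sqrt9=\pm3$. At a suitable element projecting to a $9$-cycle, the distinct characters $[9]_\pm$ of $\haaa 9$ therefore take the values $\tfrac12(1\pm3)$, that is, $2$ and $-1$. Similarly $\al=(8,2)$ gives $x=\pm4$. The expression $\tfrac12(\epsilon\pm\sqrt{\epsilon\prod_j\al_j})$ you mention is also rational for $\al=(9)$, so it cannot rescue the step. No argument can prove $x\notin\Q$ in general: that clause of the statement is stronger than what is true.

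What your formula does give immediately, since $\prod_j\al_j\neq0$, is $x\neq0$. This is all that is used later. In \cref{sameptn}(ii) it shows that $[\al]_+$ and $[\al]_-$ differ exactly on the elements projecting to cycle type $\al$. In \cref{lemma:prop_to_associates_or_conjugates}(ii) it gives $\conj{\aspn{\be}}(g)\neq\aspn{\be}(g)$. So prove part (ii) with $x\neq0$ in place of $x\notin\Q$, rather than trying to establish irrationality.
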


These statements allow us to deduce the following.

\begin{cor}\label{sameptn}
\begin{enumerate}[beginthm]
\item
Suppose $\al\in\scrd$ is odd. Then the following are equivalent:
\begin{itemize}
\item
$\br{\lan\al\ran_+}$ is proportional to $\br{\lan\al\ran_-}$;
\item
$\br{\lan\al\ran_+}$ is equal to $\br{\lan\al\ran_-}$;
\item
$\al$ has a part divisible by $3$.
\end{itemize}
\item
Suppose $\al\in\scrd$ is even. Then the following are equivalent:
\begin{itemize}
\item
$\br{[\al]_+}$ is proportional to $\br{[\al]_-}$;
\item
$\br{[\al]_+}$ is equal to $\br{[\al]_-}$;
\item
$\al$ has a part divisible by $3$.
\end{itemize}
\end{enumerate}
\end{cor}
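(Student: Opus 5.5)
We prove part (i); part (ii) is identical in structure, using \cref{item:values_of_conjugate_pairs} of \cref{lemma:char_values} in place of \cref{item:values_of_associate_pairs}, \cref{item:chars_on_odd_perms}. The plan is to establish the cycle of implications: equal implies proportional, proportional implies a part divisible by $3$, and a part divisible by $3$ implies equal. The first is trivial.

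For the other two implications we study the difference $\lan\al\ran_+-\lan\al\ran_-$. Since $\lan\al\ran_-=\ass{\lan\al\ran_+}$, \cref{lemma:char_values}\cref{item:values_of_associate_pairs,item:chars_on_odd_perms} show that this difference vanishes at every $g\in\hsss n$ except those projecting to cycle type $\al$, where it is nonzero. On the degrees we have $\lan\al\ran_+(1)=\lan\al\ran_-(1)$, so any proportionality constant must be $1$.

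Now suppose $\al$ has a part divisible by $3$. Then elements projecting to cycle type $\al$ have order divisible by $3$, so they are not $3$-regular. Hence the two characters agree on all $3$-regular classes, which gives $\br{\lan\al\ran_+}=\br{\lan\al\ran_-}$.

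Conversely, suppose no part of $\al$ is divisible by $3$. Then elements projecting to cycle type $\al$ are $3$-regular. (Lifting to $\hsss n$ can only introduce an extra factor of $2$ in the order, since $z$ has order $2$.) On such elements the two characters take values $v$ and $-v$ with $v\neq0$. If they were proportional, the constant would have to be $1$ by comparing degrees at the identity, which is $3$-regular. That would force $v=-v$, a contradiction.

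For (ii), the same argument applies with $[\al]_\pm$, using $[\al]_-=\conj{[\al]_+}$. The two characters agree everywhere except at elements projecting to cycle type $\al$, where they differ by $x\notin\Q$, so $x\neq0$. Their degrees are equal, so the only possible proportionality constant is again $1$. Whether these exceptional elements are $3$-regular is decided exactly as in (i).

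The only point requiring any care is the $3$-regularity claim in the converse direction. This uses the fact that the preimage of a $3$-regular element of $\sss n$ is $3$-regular because the kernel $\lan z\ran$ has order $2$. Everything else is immediate from \cref{lemma:char_values}.
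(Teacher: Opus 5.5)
Your proof is correct and follows essentially the same route as the paper: both use \cref{lemma:char_values} to see that the two characters differ only on elements projecting to cycle type $\al$, note that equal degrees force any proportionality constant to be $1$, and observe that such elements are $3$-singular exactly when $\al$ has a part divisible by $3$, since lifting to the double cover at most doubles the order. Your explicit check that the difference is nonzero on those elements (via $v\neq0$ and $x\notin\Q$) just spells out a step the paper leaves implicit.
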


\begin{proof}
We prove only (i); the proof of (ii) is similar (using \Cref{lemma:char_values}\Cref{item:values_of_conjugate_pairs} in place of \Cref{lemma:char_values}\Cref{item:values_of_associate_pairs}).

By \Cref{lemma:char_values}\Cref{item:values_of_associate_pairs}, $\lan\al\ran_+$ and $\lan\al\ran_-$ differ only on elements of $\hsss n$ that project to cycle type $\al$. In particular, $\lan\al\ran_+$ and $\lan\al\ran_-$ have the same degree, so $\br{\lan\al\ran_+}$ and $\br{\lan\al\ran_-}$ can be proportional only if they are equal. This happens \iff the elements on which $\lan\al\ran_+$ and $\lan\al\ran_-$ differ (i.e.\ those that project to cycle type $\al$) are $3$-singular.
If $g\in\hsss n$ projects to an element $h\in\sss n$ of cycle type $\al$, then the order of $g$ is either equal to the order of $h$ or twice as large, and so the order of \(g\) is divisible by $3$ \iff $\al$ has a part divisible by $3$.
\end{proof}

\begin{cor}\label{sumsprop}
Suppose $\al,\be\in\scrd(n)$. Then $\br{\spn\al}$ is proportional to $\br{\spn\be}$ \iff $\br{\apn\al}$ is proportional to~$\br{\apn\be}$, with the same constants of proportionality.
\end{cor}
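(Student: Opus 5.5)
We compare the values of $\spn\al$ and $\apn\al$ on $3$-regular elements of $\haaa n$, and use the vanishing of $\spn\al$ off $\haaa n$. By \Cref{lemma:char_values}\Cref{item:chars_on_odd_perms}, $\spn\al(g)=0$ for every $g\in\hsss n\setminus\haaa n$ and every $\al$. Hence $\br{\spn\al}\propto\br{\spn\be}$ (with constant $c$) \iff the restrictions $\br{\spn\al}\res{\haaa n}$ and $c\,\br{\spn\be}\res{\haaa n}$ agree. It therefore suffices to show that $\spn\al\res{\haaa n}$ and $\apn\al$ agree on $3$-regular elements of $\haaa n$, up to a constant independent of $\al$. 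We will in fact get the constant $\sqrt2$ on all elements of $\haaa n$.

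The key step is the restriction identity. Using \eqref{restrictionofspin}, if $\al$ is even then
\[
\spn\al\res{\haaa n}=\lan\al\ran\res{\haaa n}=[\al]_++[\al]_-=\sqrt2\,\apn\al .
\]
If $\al$ is odd then
\[
\spn\al\res{\haaa n}=\tfrac1{\sqrt2}\bigl(\lan\al\ran_+\res{\haaa n}+\lan\al\ran_-\res{\haaa n}\bigr)=\tfrac{2}{\sqrt2}[\al]=\sqrt2\,\apn\al .
\]
So in all cases $\spn\al\res{\haaa n}=\sqrt2\,\apn\al$ as class functions on $\haaa n$, and so $\br{\spn\al}\res{\haaa n}=\sqrt2\,\br{\apn\al}$.

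To conclude, suppose $\br{\spn\al}=c\,\br{\spn\be}$. Restricting to $\haaa n$ and dividing by $\sqrt2$ gives $\br{\apn\al}=c\,\br{\apn\be}$. Conversely, if $\br{\apn\al}=c\,\br{\apn\be}$, then $\br{\spn\al}$ and $c\,\br{\spn\be}$ agree on $3$-regular elements of $\haaa n$, and both vanish on $\hsss n\setminus\haaa n$, so they are equal.

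The only point needing care is the vanishing of $\spn\al$ on $\hsss n\setminus\haaa n$ when $\al$ is odd and $g$ projects to cycle type $\al$. Here $\lan\al\ran_\pm(g)$ are nonzero but opposite by \Cref{lemma:char_values}\Cref{item:chars_on_odd_perms}, so their average is zero. Everything else is bookkeeping.
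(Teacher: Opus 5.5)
Your proof is correct and follows the paper's argument: combine the restriction rule \eqref{restrictionofspin} with the vanishing of $\spn\al$ on $\hsss n\setminus\haaa n$ from \cref{lemma:char_values}\ref{item:chars_on_odd_perms}. Your factor is in fact the accurate one: with the stated normalisations $\spn\al\res{\haaa n}=\sqrt2\,\apn\al$, whereas the paper's display writes $\spn\al(g)=\apn\al(g)$ on $\haaa n$. Since this factor is the same for every $\al$, it does not affect the constants of proportionality.
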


\begin{proof}
Using \cref{restrictionofspin} and \cref{lemma:char_values}\ref{item:chars_on_odd_perms}, we can write
\[
\spn\al(g)=
\begin{cases}
\apn\al(g)&\text{if }g\in\haaa n,
\\
0&\text{if }g\in\hsss n\setminus\haaa n,
\end{cases}
\]
and similarly for $\be$. The result follows immediately.
\end{proof}

\begin{lemma}
\label{lemma:prop_to_associates_or_conjugates}
Let \(\al,\be\in\scrd(n)\).
\begin{enumerate}
\item
Suppose $\sspn\al$ and $\sspn\be$ are distinct irreducible spin characters of $\hsss n$ labelled by $\al,\be$. If \(\br{\sspn\al}\) is proportional to \(\br{\sspn\be}\), then \(\br{\sspn\al}=\br{\ass{\sspn\al}}\) and \(\br{\sspn\be}=\br{\ass{\sspn\be}}\).
\item
Suppose $\aspn\al$ and $\aspn\be$ are distinct irreducible spin characters of $\haaa n$ labelled by $\al,\be$. If \(\br{\aspn\al}\) is proportional to \(\br{\aspn\be}\), then \(\br{\aspn\al}=\br{\conj{\aspn\al}}\) and \(\br{\aspn\be}=\br{\conj{\aspn\be}}\).
\end{enumerate}
\end{lemma}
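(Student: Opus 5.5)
We prove (i); part (ii) is the same argument with \Cref{lemma:char_values}\Cref{item:values_of_conjugate_pairs} in place of \Cref{lemma:char_values}\Cref{item:values_of_associate_pairs}, and $\conj{}$ in place of $\ass{}$. The idea is to compare $\sspn\al$ and $\ass{\sspn\al}$ on $3$-regular classes. By \Cref{lemma:char_values}\Cref{item:values_of_associate_pairs}, they agree except on elements projecting to cycle type $\al$ with $\al$ odd. If $\al$ is even, or if $\al$ is odd with a part divisible by $3$, then such elements are $3$-singular (or do not exist), so $\br{\sspn\al}=\br{\ass{\sspn\al}}$ is automatic. This is exactly the content of \Cref{sameptn}(i). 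So the task is to rule out the case where $\al$ is odd and has no part divisible by $3$; and symmetrically for $\be$.

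Assume for contradiction that $\al$ is odd and every part of $\al$ is coprime to $3$. Let $g\in\hsss n$ project to cycle type $\al$. Then $g$ is $3$-regular, because its order is the lcm of the parts of $\al$, possibly doubled. Since $\al$ is odd, it has an even part, so $g$ projects to an odd permutation, that is, $g\in\hsss n\setminus\haaa n$.

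By \Cref{lemma:char_values}\Cref{item:chars_on_odd_perms} and the hypothesis $\br{\sspn\al}=c\,\br{\sspn\be}$ with $c\neq0$, we get
\[
0\neq\sspn\al(g)=c\,\sspn\be(g).
\]
Hence $\sspn\be(g)\neq0$. Applying \Cref{lemma:char_values}\Cref{item:chars_on_odd_perms} to $\be$, this forces $g$ to project to cycle type $\be$, so $\be=\al$. Thus $\sspn\al$ and $\sspn\be$ are the two distinct characters $\lan\al\ran_+$ and $\lan\al\ran_-$. But then \Cref{sameptn}(i) says they are proportional \iff $\al$ has a part divisible by $3$, contradicting our assumption. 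Hence $\br{\sspn\al}=\br{\ass{\sspn\al}}$, and by symmetry the same holds for $\be$.

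For (ii), the same template applies. We suppose $\al$ is even with no part divisible by $3$, take $g\in\haaa n$ projecting to cycle type $\al$, and note that $g$ is $3$-regular. The main obstacle is that $\haaa n$ has no analogue of the vanishing statement \Cref{lemma:char_values}\Cref{item:chars_on_odd_perms}, so $\sspn\be(g)\neq0$ no longer forces $\be=\al$. Instead we use irrationality. By \Cref{lemma:char_values}\Cref{item:values_of_conjugate_pairs}, $[\al]_\pm(g)$ are irrational. If $\be\neq\al$, then $\aspn\be(g)=\conj{\aspn\be}(g)$, and $\aspn\be$ and $\conj{\aspn\be}$ have the same degree.

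From $\aspn\al=c\,\aspn\be$ on $3$-regular classes we get $c=\aspn\al(1)/\aspn\be(1)\in\Q$. Then applying the outer automorphism gives $\conj{\aspn\al}=c\,\conj{\aspn\be}$ on $3$-regular classes. Since conjugation permutes $3$-regular classes and $\aspn\be(g)=\conj{\aspn\be}(g)$, this yields $\aspn\al(g)=\conj{\aspn\al}(g)$, contradicting $x\neq0$. If instead $\be=\al$, the claim follows from \Cref{sameptn}(ii) exactly as in part (i).
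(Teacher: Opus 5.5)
Your proof is correct and follows essentially the same route as the paper. In (i) you evaluate at a $3$-regular element of $\hsss n\setminus\haaa n$ projecting to the odd label, where \cref{lemma:char_values}\ref{item:chars_on_odd_perms} forces the other character to vanish. In (ii) you compare values at $g$ and $g^t$ for $g$ projecting to the even label, and the paper's argument is the same except that it disposes of the case $\al=\be$ first via \cref{sameptn}. Your remarks that $c\in\Q$ and that $[\al]_\pm(g)$ are irrational are not needed, since only $x\neq 0$ is used.
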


\begin{proofenum}
\item
First we consider the case $\al=\be$. In this case, in order for $\sspn\al$ and $\sspn\be$ to be distinct, $\al$ must be odd, and the two characters are $\lan\al\ran_+$ and $\lan\al\ran_-$ in some order. So the result follows from \cref{sameptn}(i).

Now suppose $\al\neq\be$, and suppose, towards a contradiction, that \(\br{\sspn\be}\neq\br{\ass{\sspn\be}}\). Then in particular \(\sspn\be\neq\ass{\sspn\be}\), so that $\be$ is odd, and by \cref{sameptn} $\be$ does not have a part divisible by $3$. Let \(g \in \hsss n\) be an element that projects to cycle type $\be$; then $g$ is a $3'$-element of $\hsss n$. Furthermore, $g\notin\haaa n$ because $\be$ is odd, so (by \Cref{lemma:char_values}\Cref{item:chars_on_odd_perms}) $\sspn\al(g)=0$, while $\sspn\be(g)\neq0$, contradicting the assumption that \(\br{\sspn\al}\) and \(\br{\sspn\be}\) are proportional.
\item
The case $\al=\be$ is treated in the same way as in (i).
Now suppose $\al\neq\be$, and suppose, towards a contradiction, that \(\br{\aspn\be}\neq\br{\ass{\aspn\be}}\). Then in particular \(\aspn\be\neq\ass{\aspn\be}\), so that $\be$ is even, and by \cref{sameptn} $\be$ does not have a part divisible by $3$. Let \(g \in \haaa n\) be an element that projects to cycle type $\be$, and let $t\in\hsss n\setminus\haaa n$, so that $g$ and $g^t$ are $3'$-elements lying in different conjugacy classes of $\haaa n$; then \cref{lemma:char_values}\ref{item:values_of_conjugate_pairs} gives $\aspn\al(g^t)=\conj{\aspn\al}(g)=\aspn\al(g)$ and $\aspn\be(g^t)=\conj{\aspn\be}(g)\neq\aspn\be(g)$, contradicting the assumption that \(\br{\aspn\al}\) and \(\br{\aspn\be}\) are proportional.
\end{proofenum}

Recall that \(\eps(\al) = 1\) if \(\al\) is odd and \(\eps(\al) = 0\) otherwise.

\begin{proposition}
\label{prop:reduction_to_averaged_chars}
Let \(\al,\be\in\scrd(n)\) and \(c \in \R\).
\begin{enumerate}
    \item
    \label{item:snreduction}
    Let $\sspn\al$ and $\sspn\be$ be distinct irreducible characters of $\hsss n$.
    Then \(\br{\sspn\al} = c \br{\sspn\be}\)
    if and only if
    \begin{itemize}
        \item
        \(\br{\spn{\al}}=2^{(\eps(\al)-\eps(\be))/2} c \br{\spn{\be}}\),
        and
        \item  whichever of \(\al\) and \(\be\) are odd have a part divisible by \(3\).
    \end{itemize}
    \item
    \label{item:anreduction}
    Let $\aspn\al$ and $\aspn\be$ be distinct irreducible characters of $\haaa n$.
    Then \(\br{\aspn\al} = c \br{\aspn\be}\)
    if and only if
    \begin{itemize}
        \item
        \(\br{\spn{\al}} = 2^{(\eps(\be) - \eps(\al))/2} c \br{\spn{\be}}\),
        and
        \item whichever of \(\al\) and \(\be\) are even have a part divisible by \(3\).
    \end{itemize}
\end{enumerate}
\end{proposition}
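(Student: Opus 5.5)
We prove (i); the proof of (ii) runs in parallel, using \cref{lemma:prop_to_associates_or_conjugates}(ii), \cref{sameptn}(ii) and \cref{restrictionofspin} for $\haaa n$ in place of the $\hsss n$ statements. The key step is to express $\br{\sspn\al}$ in terms of $\br{\spn\al}$ whenever $\br{\sspn\al}=\br{\ass{\sspn\al}}$. If $\al$ is even, then $\sspn\al=\lan\al\ran=\spn\al$, so $\br{\sspn\al}=2^{-\ep(\al)/2}\br{\spn\al}$ trivially. If $\al$ is odd and $\br{\lan\al\ran_+}=\br{\lan\al\ran_-}$, then $\br{\spn\al}=\frac1{\sqrt2}(\br{\lan\al\ran_+}+\br{\lan\al\ran_-})=\sqrt2\,\br{\sspn\al}$. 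So in both cases
\[
\br{\sspn\al}=2^{-\ep(\al)/2}\br{\spn\al}
\]
whenever $\br{\sspn\al}=\br{\ass{\sspn\al}}$. By \cref{sameptn}(i), this hypothesis holds if and only if $\al$ is even or $\al$ has a part divisible by $3$.

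For the forward direction, assume $\br{\sspn\al}=c\br{\sspn\be}$ with the two characters distinct. By \cref{lemma:prop_to_associates_or_conjugates}(i), $\br{\sspn\al}=\br{\ass{\sspn\al}}$ and $\br{\sspn\be}=\br{\ass{\sspn\be}}$. By \cref{sameptn}, whichever of $\al,\be$ are odd therefore have a part divisible by $3$. Applying the displayed identity to both $\al$ and $\be$ gives $2^{-\ep(\al)/2}\br{\spn\al}=c\,2^{-\ep(\be)/2}\br{\spn\be}$. Rearranging yields $\br{\spn\al}=2^{(\ep(\al)-\ep(\be))/2}c\br{\spn\be}$.

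For the converse, the second condition together with \cref{sameptn} gives $\br{\sspn\al}=\br{\ass{\sspn\al}}$ and the same for $\be$. The displayed identity then converts the hypothesis on $\br{\spn\al},\br{\spn\be}$ back into $\br{\sspn\al}=c\br{\sspn\be}$.

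For (ii), the analogous identity is $\br{\aspn\al}=2^{-(1-\ep(\al))/2}\br{\apn\al}$. Here the extra factor arises for even $\al$, which is why the exponent's sign flips. Combined with \cref{sumsprop}, which states that $\br{\apn\al}\propto\br{\apn\be}$ with the same constant as for $\br{\spn\al}\propto\br{\spn\be}$, this gives the stated constant $2^{(\ep(\be)-\ep(\al))/2}c$. The only place needing care is the case $\al=\be$ with the two characters being the two associates (or conjugates). There the second condition already handles proportionality via \cref{sameptn}, the exponent is $0$, and $c=1$ is forced, so the statement remains consistent. I expect no step to be a serious obstacle; it is bookkeeping of the powers of $\sqrt2$.
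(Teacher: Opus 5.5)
Your proposal is correct and follows essentially the same route as the paper. Both arguments use \cref{lemma:prop_to_associates_or_conjugates} and \cref{sameptn} to obtain $\br{\sspn\al}=2^{-\ep(\al)/2}\br{\spn\al}$ (respectively $\br{\aspn\al}=2^{-(1-\ep(\al))/2}\br{\apn\al}$), compare the constants, and in part (ii) pass from $\apn\al$ to $\spn\al$ via \cref{sumsprop}.
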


\begin{proofenum}
\item
Suppose \(\br{\sspn\al} = c \br{\sspn\be}\).
Then \cref{lemma:prop_to_associates_or_conjugates}(i) gives $\br{\sspn\al}=\br{\ass{\sspn\al}}$, and so $\br{\spn\al} = 2^{\eps(\al)/2} \br{\sspn\al}$;
similarly $\br{\spn\be} = 2^{\eps(\be)/2} \br{\sspn\be}$.
Thus $\br{\spn\al} = 2^{(\eps(\al) - \eps(\be))/2} c \br{\spn\be}$, and the second statement follows from \cref{sameptn}(i).

Conversely, suppose $\br{\spn\al} = 2^{(\eps(\al) - \eps(\be))/2} c \br{\spn\be}$, and that whichever of \(\al\) and \(\be\) are odd have a part divisible by \(3\).
Then (using \cref{sameptn}(i) in the case where $\al$ is odd) we have $\br{\spn\al} = 2^{\eps(\al)/2}\br{\sspn\al}$,
and similarly for $\be$,
so that $\br{\sspn\al} = c \br{\sspn\be}$.

\item
Suppose \(\br{\aspn\al} = c \br{\aspn\be}\).
Then \cref{lemma:prop_to_associates_or_conjugates}(ii) gives $\br{\aspn\al}=\br{\conj{\aspn\al}}$, and so $\br{\apn\al} = 2^{(1-\eps(\al))/2} \br{\aspn\al}$; 
similarly $\br{\apn\be} = 2^{(1-\eps(\be))/2} \br{\aspn\be}$.
Thus $\br{\apn\al} = 2^{(\eps(\be) - \eps(\al))/2} c \br{\apn\be}$, and therefore (by \cref{sumsprop}) $\br{\spn\al} = 2^{(\eps(\be) - \eps(\al))/2} c \br{\spn\be}$.
The second statement follows from \cref{sameptn}(ii).

Conversely, suppose $\br{\spn\al} = 2^{(\eps(\be) - \eps(\al))/2} c \br{\spn\be}$, and that whichever of \(\al\) and \(\be\) are even have a part divisible by \(3\).
Then $\br{\apn\al} = 2^{(\eps(\be) - \eps(\al))/2} c \br{\apn\be}$ by \cref{sumsprop}.
Furthermore (using \cref{sameptn}(ii) in the case where $\al$ is even) we have $\br{\aspn\al} = 2^{(1 - \eps(\al))/2} \br{\apn\al}$, and similarly for $\be$. So $\br{\aspn\al} = c\br{\aspn\be}$.
\end{proofenum}

Given \Cref{prop:reduction_to_averaged_chars}, it is clear that \Cref{thm:main_reduced} implies \Cref{thm:mainS,thm:mainA}. We therefore focus on the generalised characters $\spn\al$ for the remainder of the paper, with the goal of proving \Cref{thm:main_reduced}.

\begin{remark}
The constants of proportionality in \Cref{thm:main_reduced} can be deduced from the \emph{regularisation theorem} for spin characters. In characteristic \(3\) this was first proved by Bessenrodt, Morris and Olsson \cite[Theorem~4.1]{bessenrodt1994spinchar3}. A version for all odd primes (with a different labelling of irreducible characters) was proved by Brundan and Kleshchev \cite[Theorem~1.2(i)]{brundankleshchev06regularisation} (a more useful statement for our purposes is given in \cite[Theorem~3.1]{fayers-morotti}). The regularisation theorem describes the label and multiplicity of the most-dominant (or least-dominant, depending on which labelling is used) composition factor of a \(p\)-modular reduction, and if two Brauer characters are proportional, then these composition factors must be the same and the ratio of the multiplicities is the constant of proportionality.
The description of the label furthermore provides a restriction on which characters can be proportional, and this restriction was a key part of our proof in \cite{fayersmcd2025spintospecht} that certain characters are not proportional in characteristic \(2\).
However, in this paper, we do not need to use this restriction to rule out proportionality in characteristic \(3\) (see \Cref{onlyifsec}), and our proof of proportionality (see \Cref{sec:if}) yields the constants immediately, so we do not need to appeal to the regularisation theorem.
\end{remark}

\section{Proportionality is independent of core}
\label{sec:idpt-of-core}

A pair of proportional characters must lie in the same \(3\)-block, and hence the labelling partitions have the same \tbc.
The goal of this section is to show that, aside from this requirement, a pair of spin characters being proportional does not depend on the \tbc, and instead depends only on the \tbqs.
That is, we prove the following \lcnamecref{prop:independent_of_core}.

\begin{proposition}
\label{prop:independent_of_core}
Suppose \(\alp0,\bep0\in\scrd\) and \(\alp1,\bep1\in\scrp\). If there exists a \tbc \(\gamma\) such that \(\bspn{\corandquot{\gamma}{\alp0}{\alp1}}\) is proportional to \(\bspn{\corandquot{\gamma}{\bep0}{\bep1}}\), then \(\bspn{\corandquot{\gamma}{\alp0}{\alp1}}\) is proportional to \(\bspn{\corandquot{\gamma}{\bep0}{\bep1}}\) for every \tbc \(\gamma\), with the same constant of proportionality for each $\ga$.
\end{proposition}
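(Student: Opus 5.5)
We plan to connect any two \tbcs by a chain of functors of the form \(e_\ep^{(r)}\) or \(f_\ep^{(r)}\), each of which carries \(\spn{\corandquot{\ga}{\alp0}{\alp1}}\) to a scalar multiple of \(\spn{\corandquot{\ga'}{\alp0}{\alp1}}\), with the scalar depending only on \(\ga\) and \(\ga'\) and not on the quotient. Since these functors commute with reduction modulo \(3\) and are linear, proportionality \(\bspn{\corandquot{\ga}{\alp0}{\alp1}} = c\,\bspn{\corandquot{\ga}{\bep0}{\bep1}}\) then transfers to \(\ga'\) with the same constant \(c\). Indeed, applying the functor to both sides multiplies each side by the same nonzero scalar. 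This is the analogue of the Scopes-type equivalence argument used for \(p=2\) in \cite{fayersmcd2025spintospecht}.

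The natural moves are between adjacent \tbcs \(\thrc{r}{l}\) and \(\thrc{r}{l+1}\) (and \(\emptyset=\thrc20\leftrightarrow\thrc11\)), which differ by adding all \espams{(2-r)}. On the abacus, adding all addable \(\ep\)-nodes amounts to the reverse of \cref{lemma:node_removal_bead_move_correspondence}, together with its analogue for \(1\)-nodes. For \(0\)-nodes, beads move right from runner \(0\) to runner \(1\) and the mirror moves occur in negative positions. For \(1\)-nodes, beads move between runners \(-1\) and \(0\), including a move across the \(\times\) at positions \(\pm1\).

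The key step is the following claim. Fix \(\ga=\thrc rl\), the residue \(\ep\) with \(f_\ep\) increasing \(l\), and \(r_0\) equal to the number of \espams\ep of \(\ga\). Then for every quotient, the \stp \(\al=\corandquot{\ga}{\alp0}{\alp1}\) has exactly \(r_0\) \espams\ep and no \esprms\ep (or has some fixed count), and adding all of them gives \(\corandquot{\ga'}{\alp0}{\alp1}\). Consequently, by \cref{spinbranch}, we get
\[
f_\ep^{(r_0)}\spn\al=2^{b/2}\spn{\corandquot{\ga'}{\alp0}{\alp1}}.
\]
If instead the removable nodes are the issue, the same argument applies with \(e_\ep^{(r_0)}\) going the other way.

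This claim is not true for all weights directly. The fix is to realise \(\corandquot{\ga'}{\alp0}{\alp1}\) as the unique label obtained, which requires a block-level statement: the block of weight \(w\) with core \(\ga\) has, for every \(\al\) in it, the same number \(k\) of addable minus removable \(\ep\)-nodes. This number equals \(\netspinaddables{\ep}{\ga}\), since it depends only on content. We then use \(f_\ep^{(k)}\) restricted to that block. I would therefore restrict to the case where \(\al\) has no removable \(\ep\)-nodes, which holds once \(l\) is large relative to \(w\) (by \cref{lemma:weakly_3-separated-form}). So I first prove transfer among cores with \(l\ge w\) (a RoCK-type range), where everything is explicit via \((\thrc rl+3\alp1)\sqcup3\alp0\).

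To handle small cores, I go the other direction. Starting from a small core, \(f_\ep^{(k)}\spn\al\) is a combination of several \(\spn\la\), and I need it to be a single term. I expect this to be the main obstacle. The plan is to note that, by \cref{prop:restriction_gives_prop_pair}, if \(\bspn\al\propto\bspn\be\) then removing all removable \(\ep\)-nodes preserves proportionality. Conversely, I would show by an abacus count that the bead configuration map ``slide all runner-\(\ep\) beads left'' is a bijection on quotients between \(\thrc{r}{l+1}\) and \(\thrc rl\) that preserves \((\alp0,\alp1)\). Indeed, swapping runners \(0\) and \(1\) (or \(-1\) and \(0\)) fixes quotients up to the largest-runner convention, and \cref{lemma:pos_of_gap_and_no_of_beads} controls the gap positions. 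With that, \(\al^{-\ep}=\corandquot{\ga}{\alp0}{\alp1}\) whenever \(\al=\corandquot{\ga'}{\alp0}{\alp1}\), uniformly in the quotient.

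Proportionality then descends from large cores to all smaller cores by \cref{prop:restriction_gives_prop_pair}. The constant is tracked via the explicit scalars \(2^{b/2}\), which I would check depend only on the cores. Proportionality ascends by \(f_\ep^{(r)}\), which I would show sends \(\spn{\corandquot\ga{\alp0}{\alp1}}\) to a single multiple of \(\spn{\corandquot{\ga'}{\alp0}{\alp1}}\). This works because adding \(r\) \(\ep\)-nodes to \(\al^{-\ep}\) has a unique outcome, namely all addable nodes.

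Combining the two directions, all cores connect to one another through \(\emptyset\), and the constant is the same at every core.
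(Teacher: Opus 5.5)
\textbf{There is a genuine gap: the step from large cores down to small cores fails.}

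Your large-core step is fine. Take $\ga=\thrc{r}{l}$ with $l$ at least the weight. Then every $(\ga+3\alp1)\sqcup3\alp0$ in the block has exactly $\netspinaddables{2-r}{\ga}$ addable and no removable $(2-r)$-nodes. Adding them all gives $\corandquot{\ga'}{\alp0}{\alp1}$, and the branching coefficient ($2^{l/2}$ for $r=1$, $1$ for $r=2$) does not depend on the quotient. (The adjacent cores are $\thrc{1}{l}\leftrightarrow\thrc{2}{l}$ via $1$-nodes and $\thrc{2}{l}\leftrightarrow\thrc{1}{l+1}$ via $0$-nodes, not $\thrc{r}{l}\leftrightarrow\thrc{r}{l+1}$.)

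Your claim that $\al^{-\ep}=\corandquot{\ga}{\alp0}{\alp1}$ whenever $\al=\corandquot{\ga'}{\alp0}{\alp1}$ is false. Take $\ep=0$, $\ga'=\thrc{1}{1}=(1)$, $\ga=\emptyset$ and quotient $(\emptyset,(1))$. Then $\al=(4)$, and both nodes $(1,3),(1,4)$ are removable $0$-nodes, so $\al^{-0}=(2)$. But $\corandquot{\emptyset}{\emptyset}{(1)}=(2,1)$, which does not even have the same size. Only the difference $\netspinaddables{\ep}{\al}$ is constant on a block. Once the core is small, the separate numbers of addable and removable $\ep$-nodes vary, so $\al^{-\ep}$ lands in different blocks with altered quotients (compare \cref{lemma:restriction_calc}).

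The ascent fails in the same way. Between the weight-$1$ blocks $\{(3),(2,1)\}$ and $\{(4),(3,1)\}$:
\begin{itemize}
\item $f_0\spn{(3)}=\spn{(3,1)}+\sqrt{2}\,\spn{(4)}$ has two constituents;
\item $f_0\spn{(2,1)}=\sqrt{2}\,\spn{(3,1)}$, but $(2,1)=\corandquot{\emptyset}{\emptyset}{(1)}$ while $(3,1)=\corandquot{\thrc{1}{1}}{(1)}{\emptyset}$, so the quotient changes;
\item $e_0$ sends $\spn{(4)}$, with $(4)=\corandquot{\thrc{1}{1}}{\emptyset}{(1)}$, to a multiple of $\spn{(3)}$, with $(3)=\corandquot{\emptyset}{(1)}{\emptyset}$.
\end{itemize}
When images are sums, or single terms with the wrong quotient, proportionality of the images says nothing about the pair you need. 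Likewise \cref{prop:restriction_gives_prop_pair} only transfers proportionality to $(\al^{-\ep},\be^{-\ep})$, which is a different pair.

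\textbf{The missing idea is the paper's runner-swapping function.} Define $\runnerswap{\ep}{c}=\sum_a(-1)^{a+c}f_\ep^{(a+c)}e_\ep^{(a)}$ with $c=\netspinaddables{\ep}{\al}$, which is constant on the block. Take a removable $\ep$-node not adjacent to an addable one. The alternating signs cancel each term that removes and re-adds such a node against the term that leaves it alone. The result is $\runnerswap{\ep}{c}\spn{\al}=\pm\spn{\bswp{\al}{\ep}}$ for \emph{every} $\al$ in the block. Here $\bswp{\al}{\ep}$ swaps runners $\pm1$, so it has the same \tbq and the adjacent core (\cref{prop:runner-swap_swaps_runners,lemma:runner-swap_on_cores}).

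In your example this gives $\runnerswap{0}{1}\spn{(2,1)}=-f_0\spn{(2,1)}+f_0^{(2)}e_0\spn{(2,1)}=-\sqrt{2}\,\spn{(3,1)}+\bigl(\spn{(4)}+\sqrt{2}\,\spn{(3,1)}\bigr)=\spn{(4)}$, which is the correct target. Since $\runnerswap{\ep}{c}$ is a linear combination of functors commuting with reduction modulo $3$, it carries $\bspn{\al}=c'\bspn{\be}$ to $\pm\bspn{\bswp{\al}{\ep}}=\pm c'\bspn{\bswp{\be}{\ep}}$. Positivity of degrees fixes the sign, so the constant is preserved for all cores, with no RoCK-range restriction needed.
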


\noindent
This will be key to our proof of our (reduced) main theorem \Cref{thm:main_reduced}, in both directions: we will show proportionality holds in RoCK blocks, and deduce that it holds for all blocks;
meanwhile, considering examples of proportionality with various different \tbcs allows us to deduce different constraints on the \tbqs.

Recall that for $r\in\{1,2\}$ and $l\gs2-r$ we write
\begin{align*}
\thrc{r}{l} &= 
    (3(l-1)+r,\ 3(l-2) + r,\ \ldots,\ 3+r,\ r). 
\end{align*}
We prove \Cref{prop:independent_of_core} using the `runner-swapping function' introduced by the authors in \cite{fayersmcd2025spintospecht}, here with \sprs interpreted modulo \(3\). Recall the functors $e_\ep,f_\ep$ introduced in \cref{indressec}.

\begin{definition}[Runner-swapping function, {\cite[Definition~5.4]{fayersmcd2025spintospecht}}]
\label{def:runnerswapping}
For \(c\in\bbz\) and \(\eps \in \set{0,1}\), define the \emph{\rsf} to be 
\[
\runnerswap{\eps}{c} =
    \sum_{a\geq \max\{0,-c\}} (-1)^{a+c} f_\eps^{(a+c)} e_{\eps}^{(a)}.
\]
\end{definition}

\begin{remark}
After the publication of \cite{fayersmcd2025spintospecht}, Markus Linckelmann pointed out to us that the runner-swapping function has the appearance of a \emph{perfect isometry} (as introduced by Brou\'{e} \cite{broue1990perfectisometries}).
We explain here that this is indeed the case for (non-spin) blocks of the symmetric group.
Certainly our map induces a signed bijection on characters \cite[Theorem~6.1]{fayersmcd2025spintospecht}. 
We use \cite[Th\'{e}or\`{e}me~11]{enguehard1990perfectisometries} to deduce that this isometry is perfect: in the notation of \cite[p.~165]{enguehard1990perfectisometries}, our sign map is \(\al(\la) = (-1)^{|A|}\) where \(A\) is the set of \(\eps\)-addable nodes of \(\la\) (in the non-spin sense); condition (b) follows from the observation that swapping the \(\eps\)-runner with an adjacent runner changes the leg length of a hook if and only if removing that hook changes the number of \(\eps\)-addable nodes, as can be verified by considering the possible abacus configurations for the hook and using the fact that the leg length is the number of beads passed when the hook is removed.
These blocks were known to be perfectly isometric \cite{enguehard1990perfectisometries} and moreover derived equivalent \cite{chuangrouquier2008sl2}.
For more on perfect isometries, see for example \cite{brunatgramain2017perfectisometries} or \cite[Chapter~9]{linckelmann2018blocktheory}.
\end{remark}

We aim to show that the runner-swapping function has the following effect on spin characters. (Note that we only ever use the function $\runnerswap0c$ for odd values of $c$.)

\begin{proposition}
\label{prop:runner-swap_action_on_spin_chars}
Let \((\alp0,\alp1)\in\scrd\times\scrp\), and let \(r \in \set{1,2}\) and $l\gs1$. Then:
\begin{align*}
    \runnerswap{0}{2l-1} \spn{ \corandquot{\thrc{2}{l-1}}{\alp0}{\alp1} }
     &= \pm \spn{ \corandquot{\thrc1l}{\alp0}{\alp1} },\quad&    \runnerswap{0}{1-2l} \spn{ \corandquot{\thrc{1}{l}}{\alp0}{\alp1} }
     &= \pm \spn{ \corandquot{\thrc{2}{l-1}}{\alp0}{\alp1} }, \\
    \runnerswap{1}{l} \spn{ \corandquot{\thrc{1}{l}}{\alp0}{\alp1} }
     &= \pm \spn{ \corandquot{\thrc{2}{l}}{\alp0}{\alp1} },\quad&
    \runnerswap{1}{-l} \spn{ \corandquot{\thrc{2}{l}}{\alp0}{\alp1} }
     &= \pm \spn{ \corandquot{\thrc{1}{l}}{\alp0}{\alp1} }.
\end{align*}
\end{proposition}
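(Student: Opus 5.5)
Write $\al=\corandquot{\ga}{\alp0}{\alp1}$, where $\ga$ is the source core, and let $\al'$ denote the strict partition with the target core and the same \tbq. The plan is to compute $\runnerswap{\eps}{c}\spn\al$ directly from \cref{spinbranch}, in the way \cite[Theorem~6.1]{fayersmcd2025spintospecht} handles the characteristic-$2$ case.

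First we look at the abacus. By \cref{lemma:node_removal_bead_move_correspondence} and its analogue for $1$-nodes, removing $\eps$-nodes means moving beads leftward across a pair of adjacent runners (runners $0$ and $1$ when $\eps=0$, runners $-1$ and $0$ when $\eps=1$), together with the special move at positions $\pm1$. Because of the symmetry between positions $s$ and $-s$, each row contributes to $\al$ a local configuration for this runner pair, and that configuration falls into one of a small number of types:
\begin{itemize}
\item both positions empty or both occupied: inert;
\item exactly one $\eps$-removable bead;
\item exactly one $\eps$-addable position;
\item the special middle row at $\pm1$ (or $\pm2$ when $\eps=1$).
\end{itemize}
Passing from $\ga$ to the target core while keeping the quotient fixed swaps the contents of the two runners concerned, with the negative half reflected. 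So we expect $\al'$ to be obtained from $\al$ by removing all $\eps$-removable nodes and adding all $\eps$-addable ones. In particular, the difference ($\#\eps$-addable minus $\#\eps$-removable) should equal $c$. We check this from the core: the core $\thrc{2}{l-1}$ has $2l-1$ more $0$-addables than $0$-removables, and $\thrc1l$ has $l$ more $1$-addables than $1$-removables, with the sign in the reverse direction as stated. Since the quotient contributes equally to the addable and removable counts, the difference depends only on the core.

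Next we expand $\runnerswap{\eps}{c}\spn\al=\sum_a(-1)^{a+c}f_\eps^{(a+c)}e_\eps^{(a)}\spn\al$ using \cref{spinbranch}. Each term is a sum over pairs of a removal set $R$ (with $|R|=a$, consisting of $\eps$-nodes) and an addition set $A$ (with $|A|=a+c$), weighted by the powers of $2$ coming from the $b$-counts. The argument then has three steps.
\begin{enumerate}
\item Group the terms by the final partition $\mu$, and show that every $\mu\ne\al'$ receives total coefficient zero. For this we aim for a sign-reversing involution, or an inclusion–exclusion identity $\sum_j(-1)^j\binom{k}{j}=0$. It acts on "free" nodes: a node that is both removable from $\al$ and re-addable in $\mu$, or addable and then removed. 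Toggling whether such a node passes through the intermediate partition changes $a$ by one and leaves $\mu$ unchanged.
\item Show that the $2^{b/2}$ weights factor over the pieces of the abacus row by row. This should hold because nodes in columns differing by $1$ but having the same residue $\eps$ occur in only the few configurations listed above, so the cancellation can be carried out locally.
\item Check that the surviving term, in which every removable node is removed and every addable node is added, has coefficient $\pm1$. The two halves contribute $2^{b_1/2}$ and $2^{b_2/2}$. These must multiply to $\pm1$, that is $b_1+b_2=0$, or the two factors must cancel against $f^{(\cdot)}$ normalisations; this is what we must verify.
\end{enumerate}

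We expect the main obstacle to be the powers of $2$. In characteristic $2$, \cite{fayersmcd2025spintospecht} handled this with a careful case analysis of adjacent-column configurations. Here, for residue $0$ the $0$-nodes occur in adjacent column pairs $3k$ and $3k+1$, so the $b$-statistic genuinely interacts with which of the paired nodes are removed. Our first attempt will be to pair the columns $\{3k,3k+1\}$ into blocks and compute the local generating function for each block type. We then aim to show that each non-trivial block contributes a factor that vanishes unless the block is fully swapped, and that in that case it contributes exactly $\pm1$. This uses $\sum_j(-1)^j\binom kj 2^{(\cdot)}$-type identities tailored to whether a block holds one node or two.
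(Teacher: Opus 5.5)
Your strategy (expand $\runnerswap{\eps}{c}\spn{\al}$ using \cref{spinbranch}, cancel terms by toggling nodes that are removed and re-added, then use $c=\Delta_\eps\al$ to pin down the survivor) is the one the paper sketches. It breaks in two places.

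\textbf{The case $\eps=1$.} The two $\eps=1$ identities cannot be proved as written: the powers of $2$ you flagged are fatal there. Every $1$-node lies in a column $\equiv 2 \ppmod{3}$, so no two removed (or added) $1$-nodes are in adjacent columns. Hence \cref{spinbranch} gives each of them a factor $\sqrt{2}$. Toggling $k$ free removable nodes in your step~1 therefore gives $\sum_j(-1)^j\binom{k}{j}2^j=(-1)^k$ rather than $0$, so nothing cancels, and $b_1+b_2=0$ in step~3 fails too. Concretely, $\runnerswap{1}{1}\spn{(1)}=-f_1\spn{(1)}=-\sqrt{2}\,\spn{(2)}$. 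For $(7,5,1)=\corandquot{\thrc{1}{1}}{\emptyset}{(2,1,1)}$ one finds
\[
\runnerswap{1}{1}\spn{(7,5,1)}=\sqrt{2}\bigl(\spn{(8,5,1)}+\spn{(7,5,2)}+2\spn{(8,4,2)}\bigr),
\]
which is not a multiple of the expected $\spn{(8,4,2)}=\spn{\corandquot{\thrc{2}{1}}{\emptyset}{(2,1,1)}}$. The paper's sketch, which says the $\eps=1$ case follows the non-spin argument, overlooks the same factor.

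The fix is to use $\sum_a(-1)^{a+c}2^{-(2a+c)/2}f_1^{(a+c)}e_1^{(a)}$ instead. On the basis of the $\spn{\cdot}$ this is the non-spin runner-swapping operator with all branching coefficients equal to $1$. The non-spin cancellation then gives $\pm\spn{\bswp{\al}{1}}$, and the operator still commutes with $3$-modular reduction, which is all \cref{prop:independent_of_core} needs.

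\textbf{The case $\eps=0$.} Here the statement is true, but your survivor and your cancellation mechanism are wrong.
\begin{itemize}
\item The right local unit is a whole abacus row, i.e.\ which of $3k-1,3k,3k+1$ are parts. Runner $0$ is involved, since a part $3k+1$ can lose two $0$-nodes to become $3k-1$.
\item When these parts are $\{3k\}$ or $\{3k-1,3k+1\}$, the column pair $\{3k,3k+1\}$ has both a removable and an addable $0$-node. ``Remove every removable node and add every addable node'' does not even give a strict partition.
\item The actual survivor $\bswp{\al}{0}$ leaves such a pair unchanged. Its local coefficient is $1-2=-1$, a sum over two paths (do nothing; remove and re-add), not a single product $2^{b_1/2}2^{b_2/2}$.
\item Local factors do not vanish away from the full swap. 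A pair with only addable nodes has no cancellation at all, and a mixed pair has the nonzero factor $-\sqrt{2}$ for removing its node.
\end{itemize}
Your cancellation does work for pairs with only removable nodes (e.g.\ $1-2+1=0$), forcing them to be emptied. After that, each pair's net change is at most its number of addable minus removable nodes. Only the global constraint that these net changes sum to $c=\Delta_0\al$ then forces equality in every pair.

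\textbf{Identifying the survivor.} Showing that the survivor is $\corandquot{\bswp{\ga}{\eps}}{\alp0}{\alp1}$ needs an argument, not an expectation. The swap is of runners $1$ and $-1$; for $\eps=1$ it goes across adjacent rows, and swapping runners $-1$ and $0$ as you propose would change $\alp0$. You must also check, e.g.\ via $\delt{\bswp{\al}{\eps}}=1-\eps-\delt{\al}$, that the largest runner swaps too. Otherwise $\alp1$ is replaced by its conjugate, which genuinely happens when $\ga=\emptyset$ and $\eps=1$.
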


\noindent
This result tells us that we can use runner-swapping functions to grow or shrink the \tbc of a partition without changing the \tbq.

Given \Cref{prop:runner-swap_action_on_spin_chars}, it is straightforward to deduce \Cref{prop:independent_of_core}, since the \(\eps\)-induction and -restriction functors commute with \(3\)-modular reduction, and any \tbc can be reached from any other by the action described in \Cref{prop:runner-swap_action_on_spin_chars}.
Therefore the purpose of the remainder of this section is to prove \Cref{prop:runner-swap_action_on_spin_chars}.

To prove \Cref{prop:runner-swap_action_on_spin_chars}, we first construct a \stp $\bswp\al\ep$ as follows.
\begin{description}

\item[\hspace{-\leftmargin}If \(\eps=0\)] 
examine each pair of adjacent columns with \spr $0$, indexed by \(d\) and \(d+1\) for some non-negative \(d\equiv0\ppmod3\), in turn: if $\al$ has \spams but not \sprms in columns \(d\) and \(d+1\), 
then we add these \spams; if $\al$ has \sprms but not \spams in columns \(d\) and \(d+1\), 
then we remove the \sprms; otherwise we do nothing in columns \(d\) and \(d+1\).
(If \(d=0\), we ignore all references to column \(d\).)

\item[\hspace{-\leftmargin}If \(\eps=1\)]
examine each column with \spr~$1$, indexed by some non-negative \(d \equiv 2 \ppmod{3}\), in turn:
if \(\al\) has an \spam in column \(d\), then add this \spam;
if \(\al\) has a \sprm in column \(d\), remove this \sprm; 
otherwise we do nothing in column \(d\).
\end{description}

Now the proof of \Cref{prop:runner-swap_action_on_spin_chars} rests on the following \lcnamecref{eq:runner-swap_action_on_spin_char_inexplicit}, which is the analogue of \cite[Proposition 6.10]{fayersmcd2025spintospecht}.
(Note that in comparison with \cite[\S 6]{fayersmcd2025spintospecht}, here we have dropped the decorations on symbols that indicate the spin version of various operations, because here we use only the spin versions.)

\begin{proposition}
\label{eq:runner-swap_action_on_spin_char_inexplicit}
If \(\netspinaddables{\eps}{\al}\) denotes the number of \espams\eps minus the number of \esprms\eps in a \stp \(\al\), then
\[
\runnerswap{\eps}{\netspinaddables{\eps}{\al}} \spn{\al} = \pm \spn{\bswp\al\ep}.
\]
\end{proposition}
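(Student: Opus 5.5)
We expand $\runnerswap{\eps}{c}\spn\al$ with $c=\netspinaddables\eps\al$ using \cref{spinbranch}. The divided powers $e_\eps^{(a)}$ and $f_\eps^{(a+c)}$ act only on $\eps$-nodes, so everything reduces to a local combinatorial problem. For $\eps=1$, the $1$-nodes sit in the isolated columns $d\equiv2\ppmod3$. For $\eps=0$, they sit in the pairs of adjacent columns $d,d+1$ with $d\equiv0\ppmod3$, together with column $1$ alone. Removing or adding $\eps$-nodes in one such column group does not affect which $\eps$-nodes are removable or addable in another group. So the coefficient $\ip{f_\eps^{(a+c)}e_\eps^{(a)}\spn\al}{\spn\mu}$ is a sum over pairs (set $R$ of $\eps$-removable nodes removed, set $A$ of $\eps$-addable nodes then added). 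By \cref{spinbranch}, each pair contributes a product of powers of $\sqrt2$, where the number $b$ counts isolated columns. The first step is to record this product formula and check that the columns counted by $b$ are local to each group, so that the $\sqrt2$-weights factorise over groups.

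Next I would organise the alternating sum as a product over column groups, following the approach of \cite[Proposition~6.10]{fayersmcd2025spintospecht}. Write $r_j$ and $s_j$ for the numbers of removable and addable $\eps$-nodes in group $j$, so that $c=\sum_j(s_j-r_j)$. A final partition $\mu$ is determined by its configuration in each group. Summing over $a$ with sign $(-1)^{a+c}$ and fixing $\mu$, the total coefficient becomes a signed sum over how many nodes were removed and re-added in each group. The key combinatorial fact to establish is the one-group identity
\[
\sum_{k}(-1)^k\,(\text{weight of removing $k$ then re-adding}) = 0
\]
unless the group is in its swapped state. Here the swapped state means: add everything if the group has only addables, remove everything if it has only removables, and leave it unchanged otherwise. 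For $\eps=1$, each column has at most one removable or one addable $1$-node and the weights are trivial, so this is an inclusion–exclusion over subsets of columns. One checks that the only surviving term is $\mu=\bswp\al1$, with sign $\pm1$; this case should be routine.

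The main obstacle is the case $\eps=0$. Here a pair of columns $d,d+1$ can have several configurations (for instance one removable and one addable node, two removables, or two addables), and the factor $2^{b/2}$ depends on whether the nodes in columns $d$ and $d+1$ are both moved. I would list the possible local configurations of a $0$-column pair, using the abacus picture of \cref{lemma:node_removal_bead_move_correspondence} in which runner $0$ and runner $1$ beads move together. For each configuration I would compute the local generating sum of weighted signed terms directly and verify the following. When the pair has both a removable and an addable node (a ``mixed'' pair), the sum contributes only the unchanged configuration, with coefficient $\pm1$. In this case the $\sqrt2$ factors from removing and from re-adding a single node must cancel to $1$. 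When the pair has only removables or only addables, only the fully swapped configuration survives, with coefficient $\pm1$; the factors $2^{1/2}$ from a node in column $d$ with no neighbour moved must square to cancel. Column $1$ is handled separately, since $c\ge2$ excludes $b$-contributions there.

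Finally I would multiply the local results over all groups, after confirming that the global constraint on $a$ is automatically absorbed by $c=\sum_j(s_j-r_j)$. This gives $\runnerswap\eps c\spn\al=\pm\spn{\bswp\al\eps}$. Tracking the sign is unnecessary since only $\pm$ is claimed.
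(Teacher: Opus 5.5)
\textbf{The $\eps=1$ case.} This part rests on a false premise: the weights are not trivial. Every $1$-node lies in a column $d\equiv2\ppmod3$ with $d\geq2$, and both neighbouring columns have residue $0$. So every removed or added $1$-node is counted in $b$ in \cref{spinbranch}, and $e_1^{(a)}$, $f_1^{(a)}$ each carry a factor $2^{a/2}$. For a removable node that is kept in $\mu$, ``do nothing'' contributes $1$ and ``remove then re-add'' contributes $-2$. Your inclusion--exclusion therefore produces $(1-2)^{|K|}\neq0$ rather than $0^{|K|}$.

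Concretely, take $\al=(5,1)$. Then $\netspinaddables{1}{\al}=0$ and $\bswp{\al}{1}=(4,2)$. We have $e_1\spn{(5,1)}=\sqrt2\,\spn{(4,1)}$ and $f_1\spn{(4,1)}=\sqrt2\,(\spn{(5,1)}+\spn{(4,2)})$; this can be checked from the degrees $16$, $6\sqrt2$, $20$. Since $e_1^{(2)}\spn{(5,1)}=0$, this gives $\runnerswap{1}{0}\spn{(5,1)}=-\spn{(5,1)}-2\spn{(4,2)}$.

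So for $\eps=1$ the statement as literally written is false, and no argument can establish it. The paper's sketch makes the same slip: it says the $\eps=1$ case ``follows closely the non-spin case''. The repair is to replace $e_1,f_1$ by $2^{-1/2}e_1$ and $2^{-1/2}f_1$. These still commute with $3$-modular reduction, so \cref{prop:independent_of_core} is unaffected. For the rescaled operators your inclusion--exclusion argument is exactly right.

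\textbf{The $\eps=0$ case.} Here the factorisation over column groups is fine: the $\sqrt2$ weights make each row a standard $3$-dimensional $\mathfrak{sl}_2$-module, and column $1$ a standard $2$-dimensional one. But your one-group identity is false as a two-sided statement.

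In a group with only addable nodes nothing can be removed, so the configuration with nothing added has local coefficient $1$. In a mixed pair (a part $d\equiv0\ppmod3$ with $d\pm1\notin\al$), the configuration with the column-$d$ node removed and nothing added has local coefficient $-\sqrt2$. Neither is zero, and neither is the swapped state.

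What is true is one-sided. Let $n_j$ be the net number of nodes added in group $j$, and $\Delta_j$ the number of addable minus removable $0$-nodes there.
\begin{itemize}
\item The local signed sum vanishes whenever $n_j>\Delta_j$. This is the paper's first step: a removable node not adjacent to an addable one cannot survive, because ``do nothing'' cancels ``remove and re-add''.
\item When $n_j=\Delta_j$, the local configuration is forced to be the swapped one, with coefficient $\pm1$.
\end{itemize}
The conclusion then needs a pigeonhole argument. Since $\sum_j n_j=c=\sum_j\Delta_j$, any group with $n_j<\Delta_j$ forces another group with $n_{j'}>\Delta_{j'}$, and that kills the product. This is the paper's second step: the choice $c=\netspinaddables{\eps}{\al}$ forces every addable node not adjacent to a removable one to be added. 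It is precisely what your remark that the global constraint is ``automatically absorbed'' passes over.
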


\begin{proof}[Proof (sketch)]
The proof of \Cref{eq:runner-swap_action_on_spin_char_inexplicit} follows the same reasoning as in \cite[\S6]{fayersmcd2025spintospecht}; we sketch the proof here, but omit the details.
First, analogously to \cite[Lemma~6.2 and Lemma~6.9]{fayersmcd2025spintospecht}, we show that spin characters appearing with non-zero coefficient in  \(\runnerswap{\eps}{c} \spn{\al}\) must be labelled by partitions obtained from \(\al\) by removing all \esprms\eps which are not adjacent to an \espam\eps (and then possibly adding some \espams\eps): if such a node is not removed, then there are contributions to the alternating sum given by removing-then-adding-back and by doing nothing, and these contributions cancel out. 
Next, analogously to \cite[Proposition~6.3 and Proposition~6.10]{fayersmcd2025spintospecht}, we show that choosing \(c=\netspinaddables{\eps}{\al}\) forces adding all \espams\eps which are not adjacent to an \esprm\eps, which precisely yields \(\bswp{\al}{\eps}\), and we calculate that the coefficient of \(\spn{\bswp{\al}{\eps}}\) is \(\pm1\).
The arguments for \(\eps=0\) follow exactly as in \cite[Lemma~6.9 and Proposition~6.10]{fayersmcd2025spintospecht} by considering the pairs of adjacent columns of \spr \(0\); the arguments for \(\eps=1\) follow closely the non-spin case \cite[Lemma~6.2 and Proposition~6.3]{fayersmcd2025spintospecht} with `diagonals' replaced by `columns', since columns of \spr \(1\) come in singletons rather than pairs.
\end{proof}

Given \cref{eq:runner-swap_action_on_spin_char_inexplicit}, we need to describe \(\bswp{\al}{\eps}\) in terms of its \tbc and \tbq. We use an observation analogous to \cite[Lemma~6.11]{fayersmcd2025spintospecht}.

\begin{lemma}\label{describebsw}
Suppose $\al\in\scrd$.
\begin{enumerate}
\item
$\bswp\al0$ is obtained from $\al$ by simultaneously:
\begin{itemize}
\item
replacing each part $d\equiv1\ppmod3$ (\(d \gs4\)) with \(d-2\);
\item
replacing each part $d\equiv2\ppmod3$ with \(d+2\);
\item
inserting the part $1$, if $1\notin\al$;
\item
removing the part $1$, if $1\in\al$;
\end{itemize}
and then reordering parts into decreasing order.
\item
$\bswp\al1$ is obtained from $\al$ by simultaneously:
\begin{itemize}
\item
replacing each part $d\equiv1\ppmod3$ with \(d+1\);
\item
replacing each part $d\equiv2\ppmod3$ with \(d-1\);
\end{itemize}
and then reordering parts into decreasing order.
\end{enumerate}
\end{lemma}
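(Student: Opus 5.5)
The plan is to work part by part, translating the column-by-column definition of $\bswp\al\ep$ into a statement about individual parts of $\al$. Throughout, a row of $\al$ has length $\al_r$, and its last node lies in column $\al_r$. The approach has three steps: decide which columns each row can gain or lose nodes in, check that the global conditions on addable and removable nodes reduce to conditions on the single row, and then verify that the result is still strict.

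For (ii), with $\ep=1$, the residue-$1$ columns are those $d\equiv2\pmod3$. A row can lose a residue-$1$ node only if that node is its last node, so only if $\al_r\equiv2\pmod3$. Because columns $d-1\equiv1$ and $d+1\equiv0$ have residue $0$, deleting that single node leaves a row of length $\al_r-1$. Removing it keeps the partition strict because $\al_r-1\equiv1\pmod3$ is not a part: if it were, column $d$ would contain two row-ends in adjacent rows, and removing the lower one alone is still fine. The point to check is that the removable node in column $d$ is the bottom cell of column $d$.

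A row can gain a residue-$1$ node only if $\al_r\equiv1\pmod3$, in which case it gains one node to reach length $\al_r+1$. The key fact for this step is that in a given column $d$ there cannot be both an addable and a removable $1$-node. The reason is that an addable node in column $d$ comes from a part $d-1$ with $d\notin\al$, while a removable node in column $d$ needs $d\in\al$ with $d-1\notin\al$.

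I also need to check that every part $\equiv1\pmod3$ has its node addable, and every part $\equiv2\pmod3$ has its node removable. This is where strictness enters. If $d-1$ and $d$ are both parts, then column $d$ has neither an addable nor a removable node in those rows. But the simultaneous replacement $d-1\mapsto d$ and $d\mapsto d-1$ then gives the same multiset of parts, so the description still holds. This coincidence-of-pairs case is the main subtlety, and I would handle it explicitly. The net effect is exactly the stated swap.

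For (i), with $\ep=0$, the residue-$0$ columns come in pairs $(d,d+1)$ with $d\equiv0$, together with column $1$ when $d=0$. A row ending in column $d+1$, so that $\al_r\equiv1$ and $\al_r\geq4$, has removable nodes in columns $d$ and $d+1$, and removing both gives $\al_r-2$. A row ending in column $d-1\equiv2$ can add both nodes to reach $\al_r+2$. Rows ending in column $d$ are not considered, since those parts are divisible by $3$ and stay fixed.

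Column $1$ handles the part $1$: it is removed if present and inserted if absent. This insertion or removal happens regardless of the rest, because there is no column $0$ and a row of length $1$ or $0$ can always change there. The conflicts occur when both $d-1$ and $d+1$ are parts. In that case the definition says to do nothing in columns $d$ and $d+1$, and the swap $d-1\leftrightarrow d+1$ again gives the same set of parts. Partial cases also need checking: for example, $d$ and $d+1$ both parts, or a node addable in only one column. In those cases strictness shows there is exactly one addable or removable pair per relevant row, following \cref{lemma:node_removal_bead_move_correspondence} in abacus terms, where the swap corresponds to exchanging runners $1$ and $-1$, with the position-$\pm1$ bead accounting for part $1$.

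I expect the main obstacle to be this case analysis for $\ep=0$: showing that each of the ``otherwise do nothing'' configurations coincides with the simultaneous replacement after reordering. I would organise it via the abacus, reading each row $d-1,d,d+1$ as a triple of positions.
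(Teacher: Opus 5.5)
Your plan is the paper's proof. The residue-$1$ columns $d\equiv2\pmod3$ (for $\ep=1$) and the residue-$0$ column pairs $d,d+1$ with $d\equiv0\pmod3$ (for $\ep=0$, with column $1$ alone when $d=0$) are treated independently, and one checks the $4$ (resp.\ $8$) configurations according to which of $d-1,d$ (resp.\ $d-1,d,d+1$) are parts of $\al$, i.e.\ one abacus row at a time. When you write out that check, replace your per-row shortcuts with the actual configuration analysis, because some of them are false even though the resulting set of parts is always the claimed one: in (ii) the last node of a part $d$ is \emph{not} removable when $d-1\in\al$ (contrary to the parenthetical in your first paragraph, though consistent with your later treatment of that case), and in (i) rows of length $d$ are not inert, since when the parts of $\al$ among $d-1,d,d+1$ are exactly $d,d+1$ (resp.\ exactly $d-1,d$) the removable (resp.\ addable) $0$-nodes in columns $d,d+1$ are $(r,d+1)$ and $(r+1,d)$, with $r$ the row of the larger of the two parts, rather than two nodes at the end of a single row.
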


\begin{proofenum}
\item
We can consider each pair of columns \(d,d+1\) with \(d \equiv 0 \ppmod{3}\) of \spr \(0\) (ignoring references to column \(d\) when \(d=0\)) in isolation and verify the result. There are eight cases to check, depending on which of the integers $d-1,d,d+1$ are parts of $\al$.
\item
We can consider each column \(d\) with \(d \equiv 2\ppmod{3}\) of \spr \(1\) in isolation and verify the result. There are four cases to check, depending on which of the integers \(d-1,d\) are parts of \(\al\).
\end{proofenum}

We can then describe \(\bswp{\al}{\eps}\) on the \(3\)-bar-abacus, and hence in terms of \tbc and \tbq.

\begin{proposition}
\label{prop:runner-swap_swaps_runners}
Let \(\eps \in \set{0,1}\).
\begin{enumerate}
    \item
    \label{item:runner-swap_swaps_runners}
Let \(\al \in \scrd\). The \(3\)-bar-\abd for \(\bswp{\al}{\eps}\) is obtained by swapping runners \(-1\) and \(1\) in the \(3\)-bar-\abd for \(\al\), and then, if \(\eps=1\), shifting the new runner \(-1\) down (i.e.~numerically increasing the beads) one position and shifting runner \(1\) up (i.e.~numerically decreasing the beads) one position.
    \item 
    \label{item:3-bar-quotient-unchanged}
Let \(\ga\) be a \tbc and \((\alp0,\alp1)\in\scrd\times\scrp\). Then
\[
    \bswp{\corandquot{\ga}{\alp0}{\alp1}}{\eps} = \corandquot{\bswp{\ga}{\eps}}{\alp0}{\alp1}
\]
unless \(\ga=\emptyset\) and \(\eps=1\), in which case \(    \bswp{\corandquot{\emptyset}{\alp0}{\alp1}}{1} = \corandquot{\emptyset}{\alp0}{{\alp1}'}\).
    \item
    \label{item:no_of_net_spin_addables}
Let \(\al \in \scrd\) with \tbc \(\ga\).
Then \(\netspinaddables{\eps}{\al} = \netspinaddables{\eps}{\ga}\).
\end{enumerate}
\end{proposition}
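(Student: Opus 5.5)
The plan is to deduce all three parts from \cref{describebsw}, translated to the abacus position by position, and then to read off the \tbc, the \tbq and the addable/removable counts.

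\textbf{Part \cref{item:runner-swap_swaps_runners}.} Write a part $d\equiv1\ppmod3$ as $d=3r+1$, so that it sits on runner $1$ in row $r$. Write a part $d\equiv2\ppmod3$ as $d=3r-1$, so that it sits on runner $-1$ in row $r$.

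For $\ep=0$, \cref{describebsw}(i) sends $3r+1\mapsto3r-1$ (when $r\gs1$) and $3r-1\mapsto3r+1$. So positive beads swap runners within their row. The toggling of the part $1$ is exactly the swap of the row-$0$ positions $1$ and $-1$, since $1\in\al$ \iff position $-1$ is empty.

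For $\ep=1$, \cref{describebsw}(ii) sends $3r+1\mapsto3(r+1)-1$ and $3r-1\mapsto3(r-1)+1$. So runner $1$ moves to runner $-1$ one row lower, and runner $-1$ moves to runner $1$ one row higher.

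It then remains to check the negative positions. Position $-s$ is occupied \iff $s\notin\al$, and the maps above are compatible with $s\leftrightarrow-s$: for example, $-(3r+1)=3(-r)-1$ lies on runner $-1$, and its image lies in the matching shifted position. Hence the whole \abd, and not just its positive half, transforms as claimed. Runner $0$ is untouched in every case.

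\textbf{Part \cref{item:3-bar-quotient-unchanged}.} Since runner $0$ is untouched, $\alp0$ is unchanged. The operation in part \cref{item:runner-swap_swaps_runners} is a rigid move of runners, so it commutes with sliding beads up runners. Hence the \tbc of $\bswp\al\ep$ is $\bswp\ga\ep$.

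For $\alp1$, we read the pattern of beads and gaps on the largest runner. A swap or shift of runners preserves, for each bead, the number of gaps above it. So if the image of the old largest runner is the new largest runner, then $\alp1$ is unchanged. Using $\delt{\thrc1l}=l$ and $\delt{\thrc2l}=-l$, together with the explicit effect on cores, we check that this holds in every case except $\ga=\emptyset$, $\ep=1$. In that case $\delt\al=0$ both before and after, so the new largest runner (runner $-1$) is the image of the old \emph{smallest} runner. The smallest runner is the old largest runner read through $s\leftrightarrow-s$, that is, with beads and gaps interchanged and the order reversed. Reading an abacus runner in this way gives the conjugate partition, which yields ${\alp1}'$.

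\textbf{Part \cref{item:no_of_net_spin_addables}.} It suffices to show that $\netspinaddables\ep\al$ is unchanged by removing a single $3$-bar. I would do this on the abacus. By \cref{lemma:node_removal_bead_move_correspondence} (and its analogue for $\ep=1$, moving beads between runners $-1$ and $0$, or from position $2$ to $1$), the $\ep$-addable and $\ep$-removable nodes are read row by row from configurations of beads on adjacent runners, plus a boundary contribution from row $0$.

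Removing a $3$-bar slides a bead up one row on a runner, together with its mirror bead. So only the contributions of two adjacent rows, and of their mirror rows, change. A finite case check should then show that the net change is zero. Alternatively, one can observe that $\netspinaddables\ep\al$ is a function of the content alone, since it is the $\ep$-weight of the corresponding crystal vertex. Then the result follows from Morris--Yaseen, because $\al$ and $\ga$ have the same content.

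I expect this case analysis, in particular the row-$0$ and sign-boundary configurations where a bar may involve position $3$ and $-3$, to be the main obstacle. I would first try the content/weight argument to avoid it.
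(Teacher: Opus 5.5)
Your parts (i) and (ii) are fine and follow the paper closely. The paper identifies the new core and the new largest runner from the identity $\delt{\bswp\al\eps}=1-\eps-\delt\al$. You instead use that rigid runner moves commute with sliding beads up, together with the explicit values on cores, and this works equally well.

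Part (iii), however, is not actually proved, because neither of your two routes is carried out. The bar-by-bar abacus check is only asserted (``should show''). Your description of the $\eps=1$ moves is also off: removing a $1$-node moves a bead from runner $-1$ in row $k$ to runner $1$ in row $k-1$, and its mirror bead does the same, so runner $0$ plays no role. The crystal argument does not apply as stated. The Brundan--Kleshchev crystal has restricted $3$-strict partitions as vertices, so a general $\al$ is not a vertex of it. Moreover, the claim that $\netspinaddables{\eps}{\al}$ depends only on the content, with this paper's notion of addable and removable nodes, is essentially what part (iii) asserts, so it needs its own proof.

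The idea you are missing is that $\netspinaddables{\eps}{\al}=|\bswp\al\eps|-|\al|$, which comes straight from the construction of $\bswp\al\eps$. In each column of residue $1$, or pair of columns of residue $0$, the construction either adds all the \espams\eps there or removes all the \esprms\eps there. In the columns where it does nothing there are equally many $\eps$-addable and $\eps$-removable nodes (at most one of each), as the cases behind \cref{describebsw} show. Part (ii) shows that $\bswp\al\eps$ has \tbc $\bswp\ga\eps$ and the same \tbw $w$ as $\al$, since conjugating $\alp1$ does not change its size. Hence
\[
\netspinaddables{\eps}{\al}=|\bswp\al\eps|-|\al|=(|\bswp\ga\eps|+3w)-(|\ga|+3w)=\netspinaddables{\eps}{\ga}.
\]
Alternatively, the same column-by-column count gives directly $\netspinaddables{1}{\al}=\delt\al$ and $\netspinaddables{0}{\al}=1-2\delt\al$, and $\delt\al$ is invariant under removing $3$-bars. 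Either way, no case analysis of bar removals is needed.
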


\begin{proofenum}
\item
Recall that the positive half of runner \(-1\) encodes the parts congruent to \(2 \ppmod{3}\) and the positive half of runner \(1\) encodes the parts congruent to \(1 \ppmod{3}\).
Part \Cref{item:runner-swap_swaps_runners} then follows directly from \Cref{describebsw}.
\item
Let $\al=\corandquot{\ga}{\alp0}{\alp1}$. We claim that, unless \(\ga=\emptyset\) and \(\eps=1\), the largest runner of $\bswp\al\eps$ is the smallest runner of $\al$. To see this, recall that the largest runner is determined by the integer
\[
\delt\al=|\setbuild{d\in\al}{d\equiv1\ppmod3}|-|\setbuild{d\in\al}{d\equiv2\ppmod3}|;
\]
specifically, runner $1$ is largest \iff $\delt\al>0$. Given this, our claim follows from the equation
\begin{equation}\label{deltswap}
\delt{\bswp\al\eps}=1-\eps-\delt\al.
\end{equation}
This equation in turn follows from the observations that
\begin{itemize}
\item
if $d\in\N$ with $d\equiv1\ppmod3$, then $d\in\al$ \iff $d+1\in\bswp\al1$;
\item
if $d\in\N$ with $d\equiv2\ppmod3$, then $d\in\al$ \iff $d+2\in\bswp\al0$;
\item
$1\in\al$ \iff $1\notin\bswp\al0$.
\end{itemize}

Using our claim about the largest runner together with part (i), we see that the \tbq of $\bswp\al\eps$ is $(\alp0,\alp1)$ except in the case where $\ga=\emptyset$ and $\eps=1$, because the configuration of runner $0$ in the abacus is the same in $\al$ and $\bswp\al\eps$, and the configuration of the largest runner is the same up to a vertical shift.
For the exceptional case where $\ga=\emptyset$ and $\eps=1$, the configuration of the largest runner for $\bswp\al\eps$ is determined by the smallest runner for $\al$; then since the position $r$ in an abacus display is occupied \iff position $-r$ is empty, the partition determined by the smallest runner is the conjugate of the partition determined by the largest runner; thus the \tbq of $\bswp{\corandquot{\emptyset}{\alp0}{\alp1}}{1}$ is $(\alp0,{\alp1}')$.

It remains to consider \tbcs. Equation \cref{deltswap} (applied to both $\al$ and $\ga$) implies that $\delt{\bswp\al\eps}=\delt{\bswp\ga\eps}$, and hence that the \tbc of $\bswp\al\eps$ is $\bswp\ga\eps$. Now (ii) follows.
\item
Finally, since \(\netspinaddables{\eps}{\al} = |\bswp{\al}{\eps}| - |\al|\), part \Cref{item:no_of_net_spin_addables} follows from \Cref{item:3-bar-quotient-unchanged}.
\end{proofenum}

It remains only to examine the effect of \(\bswp{{-}}{\eps}\) on a \tbc, and compute \(\netspinaddables{\eps}{\thrc{r}{l}}\). The following \lcnamecref{lemma:runner-swap_on_cores} is an easy exercise.

\begin{lemma}
\label{lemma:runner-swap_on_cores}
Let \(r\in \set{1,2}\) and \(l \geq 1\). 
Let \(\eps \in \set{0,1}\).
Then:
\[
    \netspinaddables{\eps}{\thrc{r}{l}} =
    \begin{cases}
        -(2l-1) & \text{ if \(\eps = 0, r=1\);} \\
        \phantom{-|}2l+1 & \text{ if \(\eps = 0, r= 2\);} \\  
        \phantom{-|}l & \text{ if \(\eps = 1, r=1\);} \\
        -\phantom{|}l & \text{ if \(\eps = 1, r=2\);} 
    \end{cases}
\qquad\quad
\bswp{\thrc{r}{l}}{\eps} =
    \begin{cases}
        \thrc{2}{l-1} & \text{ if \(\eps = 0, r=1\);} \\
        \thrc{1}{l+1} & \text{ if \(\eps = 0, r=2\);} \\
        \thrc{2}{l} & \text{ if \(\eps = 1, r=1\);} \\
        \thrc{1}{l} & \text{ if \(\eps = 1, r=2\).} 
    \end{cases}
\]
Meanwhile, for \(l=0\), we have \(\netspinaddables{0}{\emptyset} = 1\), \(\bswp{\emptyset}{0} = \thrc{1}{1}\) and \(\netspinaddables{1}{\emptyset} = 0\), \(\bswp{\emptyset}{1} = \emptyset\).
\end{lemma}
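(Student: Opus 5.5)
The lemma is a direct computation on \(\thrc{r}{l}\), and I will compute each quantity from explicit descriptions rather than from abacus pictures. The cleanest route to \(\bswp{\thrc{r}{l}}{\eps}\) is to apply \cref{describebsw} directly, and then to read off \(\netspinaddables{\eps}{\thrc{r}{l}}\) from the identity \(\netspinaddables{\eps}{\al}=|\bswp\al\eps|-|\al|\) used in the proof of \cref{prop:runner-swap_swaps_runners}(iii). This avoids counting addable and removable nodes by hand.

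\emph{Case \(\eps=1\).} By \cref{describebsw}(ii), every part \(3i+1\) of \(\thrc{1}{l}\) becomes \(3i+2\), giving \(\thrc{2}{l}\). The size increases by \(l\), so \(\netspinaddables{1}{\thrc1l}=l\). Symmetrically, every part \(3i+2\) of \(\thrc2l\) becomes \(3i+1\), giving \(\thrc1l\) and a change of \(-l\). For \(l=0\) nothing changes, so \(\bswp\emptyset1=\emptyset\) and \(\netspinaddables1\emptyset=0\).

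\emph{Case \(\eps=0\), \(r=1\).} By \cref{describebsw}(i), the parts \(3i+1\) with \(i\ge1\) become \(3i-1=3(i-1)+2\), for \(i=1,\dots,l-1\), and the part \(1\) is removed. This gives \(\thrc{2}{l-1}\). The size change is \(-2(l-1)-1=-(2l-1)\).

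\emph{Case \(\eps=0\), \(r=2\).} The parts \(3i+2\) become \(3(i+1)+1\) for \(i=0,\dots,l-1\), and the part \(1\) is inserted since \(1\notin\thrc2l\). This gives \(\thrc{1}{l+1}\). The size change is \(2l+1\).

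For \(l=0\), only the part \(1\) is inserted, so \(\bswp\emptyset0=(1)=\thrc11\) and \(\netspinaddables0\emptyset=1\).

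The only step needing care is the identity \(\netspinaddables{\eps}{\al}=|\bswp\al\eps|-|\al|\). It holds by the construction of \(\bswp\al\eps\): all \espams\eps not adjacent to removable ones are added, and all \esprms\eps not adjacent to addable ones are removed. Adjacent addable/removable pairs within a column block contribute equally to both counts and are left unchanged.

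This last point is where I expect the main obstacle. If that cancellation is doubted, I would cross-check directly on \(\thrc{r}{l}\): for instance, \(\thrc1l\) has removable \(0\)-nodes at the ends of every row (two in the case \(\eps=0\) for rows of length at least \(4\)) and no addable \(0\)-nodes. This gives \(2(l-1)+1\) removable nodes, consistent with the formula above.
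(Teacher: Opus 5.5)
Your proof is correct, and it is exactly the direct computation the paper leaves as an `easy exercise': you read off \(\bswp{\thrc{r}{l}}{\eps}\) from \cref{describebsw} and obtain \(\netspinaddables{\eps}{\thrc{r}{l}}\) from the identity \(\netspinaddables{\eps}{\al}=|\bswp{\al}{\eps}|-|\al|\), which the paper uses in the proof of part (iii) of \cref{prop:runner-swap_swaps_runners}. The cancellation you flag as the main obstacle never arises for cores: all parts of \(\thrc{r}{l}\) are congruent to \(r\) modulo \(3\), so no column (or pair of columns) of residue \(\eps\) contains both addable and removable \(\eps\)-nodes of \(\thrc{r}{l}\), and the identity follows immediately from the definition of \(\bswp{\al}{\eps}\).
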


\Cref{prop:runner-swap_action_on_spin_chars} now follows from \Cref{eq:runner-swap_action_on_spin_char_inexplicit,prop:runner-swap_swaps_runners,lemma:runner-swap_on_cores}, and hence we have established \Cref{prop:independent_of_core}.

\section{Claimed proportionality of characters}
\label{sec:if}

In this section we show that the `if' direction of our (reduced) main theorem \Cref{thm:main_reduced} holds.

\subsection{A symmetric function identity}\label{symfnsec}

We adopt some standard notation on symmetric functions; the essential reference is Macdonald's book \cite{macdonald1998symmetric}. In particular, we let $s_\la$ denote the Schur function labelled by a partition $\la$, and $P_\al$ the Schur P-function labelled by a strict partition $\al$. Let $\lan\ ,\ \ran$ denote the inner product on the space of symmetric functions for which the Schur functions are orthonormal.

Our first objective is to give a proof of the following identity.
This was stated as a corollary of our main result in \cite{fayersmcd2025spintospecht}; here we give the details. As far as we can tell, this result is new, though the case where $\tau=\emptyset$ is well-known.

\begin{proposition}\label{schurpcores}
Suppose $\si$ and $\tau$ are \tcps. Then $s_\si s_\tau=P_{\si+\tau}$.
\end{proposition}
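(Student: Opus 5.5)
The plan is to prove the identity first in finitely many variables, where both sides become ratios of alternants, and then to deal with stability separately. Write $\si=\twoc a$ and $\tau=\twoc b$. Since the statement is symmetric in $\si$ and $\tau$, we may assume $a\gs b$. Then $\si+\tau=(2b+(a-b),\,2b-2+(a-b),\dots)$ is a \stp of length exactly $a$, or of length $0$ if $a=0$; the case $a=0$ is trivial. Two classical facts will be used:
\begin{itemize}
\item in $n$ variables, $s_\la=a_{\la+\de_n}/a_{\de_n}$, where $\de_n=(n-1,\dots,1,0)=\twoc{n-1}$;
\item Macdonald's symmetrisation formula: for a \stp $\mu$ with $\len\mu=\ell\ls n$,
\[
P_\mu(x_1,\dots,x_n)=\sum_{w\in\sss n/\sss{n-\ell}} w\Bigl(x^\mu\prod_{i\ls \ell,\ i<j}\frac{x_i+x_j}{x_i-x_j}\Bigr).
\]
\end{itemize}

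The key computation takes $n=a+1$ variables. Then $\ell=n-1$, so the product runs over all pairs $i<j$ and the quotient $\sss{n}/\sss1$ is all of $\sss n$. The numerator $\prod_{i<j}(x_i+x_j)$ is symmetric, so it factors out of the sum:
\[
P_{\si+\tau}=\prod_{i<j}(x_i+x_j)\cdot\frac{a_{\si+\tau}}{a_{\de_n}}.
\]
Now $\si+\tau=\twoc a+\twoc b=\de_n+\tau$ (padding with zeros), so $a_{\si+\tau}/a_{\de_n}=s_\tau$. Moreover $\prod_{i<j}(x_i+x_j)=a_{2\de_n}/a_{\de_n}=s_{\de_n}=s_{\twoc a}=s_\si$. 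Hence $P_{\si+\tau}(x_1,\dots,x_{a+1})=s_\si s_\tau(x_1,\dots,x_{a+1})$. The same argument with fewer variables works too, since when $n<a$ both sides vanish: $\len{\si}=a>n$.

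The main obstacle is passing to infinitely many variables. Both sides are homogeneous of degree $d=|\si|+|\tau|$, and restriction to $n$ variables is injective only for $n\gs d$, whereas the computation above used exactly $a+1$ variables. What I would try first is to carry out the same computation in $n>a+1$ variables. Group the sum over $\sss n/\sss{n-a}$ according to cosets of $\sss{a+1}\times\sss{n-a-1}$, so that
\[
P_\mu(x_1,\dots,x_n)=\sum_{w\in\sss n/(\sss{a+1}\times\sss{n-a-1})} w\Bigl(P_\mu(x_1,\dots,x_{a+1})\prod_{i\ls a+1<j}\frac{x_i+x_j}{x_i-x_j}\Bigr)
\]
(valid because the factors with $i\ls a<j$ and $i=a+1<j$ combine correctly only after checking that terms with $i=a+1$ cancel in the symmetrisation; verifying this is itself part of the work). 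Then substitute the $(a+1)$-variable identity and try to recognise the result as $s_\si s_\tau$ in $n$ variables via a Laplace-type expansion of the alternant $a_{\de_n+\ldots}$.

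If this bookkeeping becomes unmanageable, the fallback is the route the paper's introduction signals. Use the main result of \cite{fayersmcd2025spintospecht}, which computes characteristic-$2$ reductions of spin characters in RoCK blocks in terms of Littlewood--Richardson-type coefficients. Compare it with the known decomposition of $P_\mu$ into Schur functions, so that both sides have the same coefficient of every $s_\la$ in $\La$ directly, with no truncation to finitely many variables needed.
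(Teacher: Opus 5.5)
Your computation in $a+1$ variables is correct. With $n=a+1$ the sum runs over all of $\sss n$, the symmetric factor $\prod_{i<j}(x_i+x_j)=s_{\de_n}=s_\si$ comes out, and $a_{\de_n+\tau}/a_{\de_n}=s_\tau$.

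The step you defer, however, is not bookkeeping. It is the whole content of the statement, and your proposal has a genuine gap there.

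\textbf{The finite-variable identity ignores the hypothesis.} Your computation never uses that $\tau$ is a \tcp. Word for word it proves $P_{\de_n+\mu}(x_1,\dots,x_n)=s_{\de_n}s_\mu(x_1,\dots,x_n)$ for \emph{every} partition $\mu$ with $\len\mu\ls n$. This does not lift to $\Lambda$. For $n=2$ and $\mu=(2)$ you get $P_{(3)}(x_1,x_2)=(x_1+x_2)(x_1^2+x_1x_2+x_2^2)=s_{(1)}s_{(2)}(x_1,x_2)$, whereas $P_{(3)}=s_{(3)}+s_{(2,1)}+s_{(1,1,1)}\neq s_{(1)}s_{(2)}$.

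Nor is the truncation harmless for $2$-cores. For $a=b=2$, $s_{(2,1)}^2$ already contains $s_{(3,1,1,1)}$ and $s_{(2,2,1,1)}$, of length $4>a+1$. So the passage to $\Lambda$ is exactly where the hypothesis on $\tau$ must enter, and you give no mechanism for it.

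\textbf{The grouping formula is false.} The factors with $i=a+1$ do not cancel. Take $a=1$, $\mu=(1)$ (the instance $\si=(1)$, $\tau=\emptyset$) and $n=3$. The right-hand side is $\sum_{k=1}^{3}(x_1+x_2+x_3-x_k)\prod_{i\neq k}\frac{x_i+x_k}{x_i-x_k}$, which is identically $0$, not $x_1+x_2+x_3$.

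\textbf{The fallback lacks its ingredients.} It points toward the paper's proof, but it does not work by comparing with a known Schur expansion of $P_\mu$. The paper takes a characteristic-$2$ RoCK block of $\hsss n$ of weight $w=|\si|+|\tau|$ with $2$-core $\nu$, and uses three facts.
\begin{enumerate}
\item By \cite[Theorem~1.1]{fayersmcd2025spintospecht}, since $\hat\nu\sqcup2(\si+\tau)$ is $4$-stepped and semicongruent, the $2$-modular reduction of $\spn{\hat\nu\sqcup2(\si+\tau)}$ equals $2^{\len{\si+\tau}/2}$ times that of the non-spin character $\chi_{\nu+2\si\sqcup\tau\sqcup\tau}$.
\item By \cref{rockdecomp}, the decomposition numbers of these two characters are $2^{\len{\si+\tau}/2}\sum_\la A_{\la\mu}\lan P_{\si+\tau},s_\la\ran$ and $\sum_\la A_{\la\mu}\lan s_\si s_{\tau'},s_\la\ran$ respectively. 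Crucially, the same invertible adjustment matrix $A$ appears in both: Geck and James--Mathas give the non-spin side, and \cite[Theorem~5.3]{fayers20spin2alt} gives the spin side.
\item $\tau'=\tau$.
\end{enumerate}
Inverting $A$ gives $\lan P_{\si+\tau},s_\la\ran=\lan s_\si s_\tau,s_\la\ran$ for all $\la\in\scrp(w)$. This is the identity directly in $\Lambda$, with no truncation to finitely many variables. Until these three ingredients are spelled out, the fallback is not yet a proof.
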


The proof of \cref{schurpcores} relies on results concerning the decomposition numbers for $\hsss n$ in characteristic~$2$. We refer the reader to \cite[Section 5]{fayers20spin2alt} and the references therein for a full account, but here we summarise the essential points we need. For the current subsection only we use the notation $\bar{\ }$ for $2$-modular Brauer characters, rather than $3$-modular.

Take a non-negative integer $w$, and a \tcp $\nu=(c,c-1,\dots,1)$ with $c\gs w-1$. Let $\hat\nu$ be the partition $(2c-1,2c-5,2c-9,\dots)$, and let $n=|\nu|+2w$. Then there is a $2$-block $B$ of $\hsss n$ in which the ordinary irreducible characters are:
\begin{itemize}
\item
the inflations of the $\sss n$-characters $\chi_{\nu+2\ga\sqcup\de\sqcup\de}$, for partitions $\ga,\de$ with $|\ga|+|\de|=w$;
\item
the spin characters labelled by \stps $(\hat\nu+4\alp1)\sqcup2\alp0$, where $\alp0,\alp1$ are partitions with $\alp0$ strict and with $|\alp0|+2|\alp1|=w$.
\end{itemize}
The block $B$ is called a \emph{RoCK block} (or sometimes a \emph{Rouquier block}) of $\hsss n$ of weight $w$. An advantage of working with RoCK blocks is that we have formulae for their decomposition numbers. The irreducible Brauer characters in $B$ can be written in the form $\phi(\nu+2\mu)$, where $\mu$ ranges over partitions of $w$. (In fact $\phi(\nu+2\mu)$ is the Brauer character of the inflation of the \emph{James module} $D^{\nu+2\mu}$ for $\sss n$.) Now we have the following results.  (As usual, $\de'$ denotes the \emph{conjugate} or \emph{transpose} of a partition $\de$, and $\cm\chi\phi$ denotes the composition multiplicity of an irreducible Brauer character $\phi$ in a Brauer character $\chi$.)

\needspace{3em}
\begin{proposition}\label{rockdecomp}
Take $\nu,w$ as above. Then there is an invertible square matrix $A$ with rows and columns labelled by partitions of $w$, such that:
\begin{enumerate}
\item
if $\mu,\ga,\de$ are partitions with $|\mu|=|\ga|+|\de|=w$, then
\[
\cm{\widebar{\chi_{\nu+2\ga\sqcup\de\sqcup\de}}}{\phi(\nu+2\mu)}=\sum_{\la}A_{\la\mu}\lan s_\ga s_{\de'},s_\la\ran;
\]
\item
if $\mu,\alp0$ are partitions of $w$ with $\alp0$ strict, then
\[
\cm{\widebar{\spn{\hat\nu\sqcup2\alp0}}}{\phi(\nu+2\mu)}=2^{\len{\alp0}/2}\sum_\la A_{\la\mu}\lan P_{\alp0},s_\la\ran.
\]
\end{enumerate}
\end{proposition}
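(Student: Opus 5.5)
The plan is to assemble both parts from known results on $2$-modular RoCK blocks rather than compute anything new, following \cite[Section~5]{fayers20spin2alt}. Take $A$ to be the (suitably indexed) $2$-modular decomposition matrix of the Schur algebra $S(w,w)$, with rows and columns labelled by partitions of $w$. This matrix is unitriangular with respect to the dominance order, so it is invertible. Part~(i) is then the Chuang--Tan formula (in the form due to Turner and Fayers) for decomposition numbers of RoCK blocks of $\sss n$ of weight $w$, specialised to $p=2$. Here the abacus for $\nu$ has two runners, and the ordinary characters are labelled by pairs of quotient partitions, which in the notation above are $\nu+2\ga\sqcup\de\sqcup\de$. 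That formula reads
\[
\cm{\widebar{\chi_{\nu+2\ga\sqcup\de\sqcup\de}}}{\phi(\nu+2\mu)}=\sum_\la c^{\la}_{\ga\de'}\,A_{\la\mu},
\qquad c^\la_{\ga\de'}=\lan s_\ga s_{\de'},s_\la\ran .
\]
The only work is to check that the labelling of James modules $D^{\nu+2\mu}$ and the conjugation $\de\mapsto\de'$ match the conventions used here. The hypothesis $c\gs w-1$ ensures that the block is genuinely RoCK.

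For part~(ii), I will first express the $2$-modular reduction of each spin character in $B$ as an integer combination of reductions of non-spin characters in $B$. The key input is the result, proved for RoCK blocks in \cite[Section~5]{fayers20spin2alt}, that
\[
\widebar{\spn{\hat\nu\sqcup2\alp0}}=2^{\len{\alp0}/2}\sum_{\ga,\de}\lan P_{\alp0},\,s_\ga s_{\de'}\ran'\,\widebar{\chi_{\nu+2\ga\sqcup\de\sqcup\de}},
\]
where the coefficients $\lan P_{\alp0},s_\ga s_{\de'}\ran'$ come from expanding $P_{\alp0}$ against products of Schur functions. The intended mechanism is the following. The Brauer characters of the $\chi_{\nu+2\ga\sqcup\de\sqcup\de}$, as elements of the span of the $\phi(\nu+2\mu)$, realise via part~(i) the map $s_\ga s_{\de'}\mapsto\sum_\mu\bigl(\sum_\la A_{\la\mu}\lan s_\ga s_{\de'},s_\la\ran\bigr)\phi(\nu+2\mu)$. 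By linearity this map factors through $\Lambda^w\to\Lambda^w$, $f\mapsto\sum_{\la,\mu}A_{\la\mu}\lan f,s_\la\ran\,\phi(\nu+2\mu)$. It therefore suffices to show that $\widebar{\spn{\hat\nu\sqcup2\alp0}}$ is the image of $2^{\len{\alp0}/2}P_{\alp0}$ under this same map.

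To prove that, I would compare values on $2$-regular classes, that is, on elements projecting to cycle types with all parts odd. There Morris's rule (\cref{spinmurnak}) for the spin side and the Murnaghan--Nakayama rule for the non-spin side reduce the comparison, after stripping the core through the RoCK condition, to an identity between power-sum expansions. On the spin side the relevant expansion is that of $P_{\alp0}$, or equivalently of $Q_{\alp0}=2^{\len{\alp0}}P_{\alp0}$, in odd power sums. On the non-spin side it is the image of $s_\ga s_{\de'}$ under the $2$-quotient map, which turns $p_{2k+1}$-evaluations into plethystic evaluations. The factor $2^{\len{\alp0}/2}$ comes from the normalisation of $\spn{}$ relative to $Q$-functions.

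I expect this last step, identifying spin reductions in the RoCK block with the image of $P_{\alp0}$, to be the main obstacle: the bookkeeping of signs and powers of $2$ in Morris's rule must match the plethystic substitution exactly. I would first try simply to cite it from \cite[Section~5]{fayers20spin2alt} and the references there (Bessenrodt--Olsson and Fayers--Kleshchev--Morotti on $2$-modular spin reductions). If the conventions differ, I would fall back on the value comparison just described.
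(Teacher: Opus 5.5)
Your plan is correct in outline, and for (ii) it is exactly what the paper does: (ii) is quoted directly as a special case of \cite[Theorem~5.3]{fayers20spin2alt}. The two proofs differ only in how (i) is sourced.

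\textbf{Part (i).} The paper uses Geck's adjustment matrices. The $2$-modular decomposition matrix of $\sss n$ factors as (decomposition matrix of the Hecke algebra at $q=-1$)$\,\times A$. The Hecke-algebra RoCK numbers $\lan s_\ga s_{\de'},s_\la\ran$ come from James--Mathas, and invertibility of $A$ comes from the general unitriangularity of adjustment matrices. You instead quote a combined RoCK formula in which $A$ is identified with a re-indexed Schur-algebra decomposition matrix. That identification is true, but it is a much deeper input, which the paper mentions only parenthetically and never uses.

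The identification is also not needed to make (i) and (ii) refer to the same matrix. Putting $\de=\emptyset$ in (i) gives $A_{\la\mu}=\cm{\widebar{\chi_{\nu+2\la}}}{\phi(\nu+2\mu)}$, so $A$ is uniquely determined by (i). This also tells you what your ``suitably indexed'' must absorb: $A_{\la\mu}\neq0$ forces $\mu\trianglerighteq\la$, which is the opposite orientation from $[\Delta(\la):L(\mu)]$.

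\textbf{Part (ii).} By the same remark, given (i), statement (ii) is equivalent to
$\widebar{\spn{\hat\nu\sqcup2\alp0}}=2^{\len{\alp0}/2}\sum_\la\lan P_{\alp0},s_\la\ran\,\widebar{\chi_{\nu+2\la}}$.
This is the clean form of the input you want from the citation. Your coefficients $\lan P_{\alp0},s_\ga s_{\de'}\ran'$ are not canonical, because the products $s_\ga s_{\de'}$ are linearly dependent. By linearity of (i) any choice of expansion would work, but the $\de=\emptyset$ version above is the natural one.

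\textbf{The fallback.} Your value-comparison fallback is not a proof as sketched, so the citation is essential. Odd bars and odd hooks leave the RoCK block, so the core cannot simply be ``stripped''. Moreover, on a class of odd cycle type $\pi$, the ratio of $\spn\al$ to the $\sss n$-character with Frobenius characteristic $2^{\len{\al}}P_\al$ is $\pm2^{-(\len{\al}+\len{\pi})/2}$. This depends on $\pi$, so the problem is more than a global normalisation.
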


\begin{proof}
The theory of \emph{adjustment matrices} developed by Geck \cite{geckbrauertrees} shows that the decomposition matrix for $\sss n$ can be obtained as the product of the decomposition matrix of the corresponding Hecke algebra of type A at $q=-1$ and a square matrix $A$ called the \emph{adjustment matrix}. The adjustment matrix is known to be unitriangular (in fact, for a RoCK block it coincides with the decomposition matrix of an appropriate Schur algebra) and hence invertible, and the decomposition numbers for RoCK blocks of Hecke algebras were found by James and Mathas \cite[Corollary 2.6]{jamesmathasq-1}; these results combine to give (i).
Meanwhile part (ii) is a special case of  \cite[Theorem 5.3]{fayers20spin2alt}.
\end{proof}

\begin{proof}[Proof of \cref{schurpcores}]
Take $\si$ and $\tau$ to be \tcps as in \cref{schurpcores}. Let $w=|\si|+|\tau|$, and let $\alp0=\si+\tau$. Then the \stp $\hat\nu\sqcup2\alp0$ is a \emph{$4$-stepped and semicongruent} partition, as defined in \cite[\S 1]{fayersmcd2025spintospecht}. From the main theorem in \cite{fayersmcd2025spintospecht}, this means that $\widebar{\spn{\hat\nu\sqcup2\alp0}}$ is proportional to the $2$-modular reduction of another character in $B$, namely the character $\chi_{\nu+2\si\sqcup\tau\sqcup\tau}$; in fact, \cite[Theorem 1.1]{fayersmcd2025spintospecht} gives
\[
\widebar{\spn{\hat\nu\sqcup2\alp0}}=2^{\len{\alp0}/2}\widebar{\chi_{\nu+2\si\sqcup\tau\sqcup\tau}}.
\]
This means in particular that
\[
\cm{\widebar{\spn{\hat\nu\sqcup2\alp0}}}{\phi(\nu+2\mu)}=2^{\len{\alp0}/2}\cm{\widebar{\chi_{\nu+2\si\sqcup\tau\sqcup\tau}}}{\phi(\nu+2\mu)}
\]
for every $\mu\in\scrp(w)$, so that (by \cref{rockdecomp})
\[
\sum_\la A_{\la\mu}\lan P_{\alp0},s_\la\ran=\sum_{\la}A_{\la\mu}\lan s_\si s_{\tau'},s_\la\ran
\]
for every $\mu$. Because $A$ is invertible and \tcps are self-conjugate, this gives
\[
\lan P_{\alp0},s_\la\ran=\lan s_\si s_{\tau},s_\la\ran
\]
for every $\la\in\scrp(w)$. Because the Schur functions span the space of symmetric functions, we deduce that $P_{\alp0}=s_\si s_{\tau}$. 
\end{proof}

\subsection{Spin RoCK blocks in characteristic \texorpdfstring{\(3\)}{3}}

Now we go back to studying $3$-modular representation theory of $\hsss n$, and describe (spin) RoCK blocks in this context. Spin RoCK blocks in odd characteristic have been studied in detail by Kleshchev--Livesey \cite{Kleshchevlivesey} and by Fayers--Kleshchev--Morotti \cite{fkm}.

Fix $w\gs0$, and let $\ga$ be a \tbc with $\len\ga\gs w$. Then the block $B$ of $\hsss{|\ga|+3w}$ with \tbc $\ga$ and \tbw $w$ is called a spin RoCK block. We shall need the results from \cite{fkm} on decomposition numbers for $B$. Recall that the strict partitions labelling characters in $B$ are the partitions $\corandquot\ga{\alp0}{\alp1}$, where $\alp0,\alp1$ are partitions with $\alp0$ strict and with $|\alp0|+|\alp1|=w$.
In fact, because $B$ is a RoCK block, it is easy to describe these partitions explicitly: the conditions of \Cref{lemma:weakly_3-separated-form} are met and so
\[
\corandquot\ga{\alp0}{\alp1}=(\ga+3\alp1)\sqcup3\alp0.
\]
The irreducible Brauer characters are labelled by a different set of partitions. To make things simpler, we assume that $w$ plus the number of $1$-nodes in $\ga$ is an even number. (This ensures that every irreducible Brauer character is self-associate. If the condition is not met, then we replace $\ga$ with a suitable larger \tbc.) Then the irreducible Brauer characters can be naturally labelled $\phi(\ga\sqcup3\mu)$, where $\mu$ ranges over all partitions (not just strict partitions) of $w$. (So -- unlike in the case of $\sss n$ -- the set of labels for the irreducible Brauer characters is not a subset of the set of labels for the ordinary irreducible characters.)

Now we can describe the decomposition numbers for $B$. As with the results for characteristic $2$ in \cref{symfnsec}, these are known up to an adjustment matrix. The next \lcnamecref{rockdecomp3} follows from \cite[Corollary 6.2 and Theorem~6.17]{fkm}.

\begin{proposition}\label{rockdecomp3}
Take $\ga,w$ as above. Then there is an invertible square matrix $A$ with rows and columns labelled by partitions of $w$, such that if $\mu,\alp0,\alp1$ are partitions with $\alp0$ strict and with $|\mu|=|\alp0|+|\alp1|=w$, then
\[
\cm{\widebar{\spn{(\ga+3\alp1)\sqcup3\alp0}}}{\phi(\ga\sqcup3\mu)}=2^{\len{\alp0}/2}\sum_{\la,\ze}A_{\la\mu}\lan P_{\alp0},s_\ze\ran\lan s_\la,s_\ze s_{{\alp1}'}\ran,
\]
\end{proposition}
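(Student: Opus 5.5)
The plan is to derive \cref{rockdecomp3} by translating the main decomposition-number results of \cite{fkm} into the present notation; the ingredients are already there. First I will recall the relevant result of \cite[Theorem~6.17]{fkm}. In a spin RoCK block of weight $w$ in odd characteristic $p$, the ordinary spin characters $\spn{\al}$ are labelled by multipartitions (here $(\alp0,\alp1)$ with $\alp0$ strict). Their decomposition numbers are expressed, up to an invertible matrix coming from a Schur-algebra-type factor, by Littlewood--Richardson-type coefficients. For $p=3$, the ``runner $0$'' component contributes via Schur $P$-functions (with the usual $2^{\len{\alp0}/2}$ normalisation). The ``largest runner'' component contributes via $s_{{\alp1}'}$, the conjugate arising from the bead convention for reading runner~$1$.

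Concretely, I will write the formula of \cite{fkm} in the form
\[
\cm{\widebar{\spn{(\ga+3\alp1)\sqcup3\alp0}}}{D(\mu)} = 2^{\len{\alp0}/2}\sum_{\ze} \lan P_{\alp0},s_\ze\ran\, d_{\ze\sqcup {\alp1}'\to\mu},
\]
where $d$ is a decomposition number of a (zigzag/Schur) algebra. I will then use \cite[Corollary~6.2]{fkm} to rewrite $d_{\cdot\to\mu}$ as $\sum_{\la}A_{\la\mu}\lan s_\la, s_\ze s_{{\alp1}'}\ran$, with $A$ the (unitriangular, hence invertible) adjustment or Schur algebra decomposition matrix indexed by $\scrp(w)$. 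Substituting gives exactly the stated double sum over $\la$ and $\ze$.

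The second step is bookkeeping of labels. I need to check three things. First, the characters labelled in \cite{fkm} by their multipartition correspond to our $\spn{\corandquot\ga{\alp0}{\alp1}}=\spn{(\ga+3\alp1)\sqcup3\alp0}$; this uses \Cref{lemma:weakly_3-separated-form}, valid since $\len\ga\geq w\geq\len{\alp1}$. Second, the irreducible Brauer characters correspond to $\phi(\ga\sqcup3\mu)$. Third, the parity assumption ($w$ plus the number of $1$-nodes of $\ga$ even) makes every Brauer character self-associate. This third point means the averaging in $\spn{\,\cdot\,}$ introduces no extra factor of $\sqrt2$ beyond $2^{\len{\alp0}/2}$. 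The factor itself is accounted for by comparing $\lan\al\ran_\pm$ with $\spn\al$ when $\al$ is odd, where the parity of $\al$ is governed by $\len{\alp0}$ together with the core and weight parity.

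I expect the main obstacle to be this normalisation and conjugation check rather than any new mathematics. It requires confirming that the power of $2$ is exactly $2^{\len{\alp0}/2}$ for our averaged characters, and that the runner-$1$ quotient enters as ${\alp1}'$ rather than $\alp1$ under Yates' abacus convention. I would verify both on small cases, for example $w=1$ and $w=2$ with a sufficiently large core, by computing via the branching rule \cref{spinbranch}.
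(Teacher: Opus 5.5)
Your proposal takes the same route as the paper, which gives no argument beyond saying that the result follows from \cite[Corollary~6.2 and Theorem~6.17]{fkm} once the labels are translated. Your bookkeeping is exactly the translation needed: you identify the ordinary characters via \cref{lemma:weakly_3-separated-form}, and you use the assumption that $w$ plus the number of $1$-nodes of $\ga$ is even, which makes the Brauer characters self-associate and forces $\ep(\al)\equiv\len{\alp0}\ppmod{2}$, to pin down the power of $2$.
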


Now we can prove the main result of this section, which is the `if' part of \cref{thm:main_reduced}.

\begin{proposition}
\label{prop:if}
Suppose \(\rho\), \(\sigma\) and \(\tau\) are \tcps, and let
\begin{align*}
\alp0 &= \rho + \sigma, & \bep0 &= \rho + \tau, \\
\alp1 &= \tau,          &     \bep1 &= \sigma.
\end{align*}
If $\ga$ is any \tbc, then
\[
\br{\spn{\corandquot\ga{\alp0}{\alp1}}}=2^{(\len{\rho+\si}-\len{\rho+\tau})/2}\br{\spn{\corandquot\ga{\bep0}{\bep1}}}.
\]
\end{proposition}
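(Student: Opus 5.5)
By \cref{prop:independent_of_core}, it suffices to prove the identity for a single \tbc $\ga$, provided the constant does not depend on $\ga$. We therefore choose $\ga$ large enough that the block of \tbw $w=|\rho|+|\si|+|\tau|$ is a spin RoCK block, so that $\len\ga\gs w$. We also choose $\ga$ so that $w$ plus the number of $1$-nodes of $\ga$ is even. Both conditions can be met by taking $\ga=\thrc{r}{l}$ with $l$ large and of suitable parity. Both quotients $(\rho+\si,\tau)$ and $(\rho+\tau,\si)$ have total size $w$, since $|\rho+\si|=|\rho|+|\si|$ and the \tcps are added termwise. Hence both characters lie in this block, and \cref{rockdecomp3} applies to each.

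Since the $\phi(\ga\sqcup3\mu)$ form a basis of the Brauer characters of the block, it is enough to compare composition multiplicities. By \cref{rockdecomp3}, for every $\mu\in\scrp(w)$,
\[
\cm{\br{\spn{\corandquot\ga{\alp0}{\alp1}}}}{\phi(\ga\sqcup3\mu)}=2^{\len{\rho+\si}/2}\sum_{\la}A_{\la\mu}\lan P_{\rho+\si}s_{\tau'},s_\la\ran,
\]
where we have collapsed $\sum_\ze\lan P_{\alp0},s_\ze\ran\lan s_\la,s_\ze s_{{\alp1}'}\ran$ to $\lan s_\la,P_{\alp0}s_{{\alp1}'}\ran$ by expanding $P_{\alp0}$ in the Schur basis. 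Note that $\tau'=\tau$, because \tcps are self-conjugate. The analogous formula for $\be$ has $2^{\len{\rho+\tau}/2}$ and $\lan P_{\rho+\tau}s_{\si},s_\la\ran$.

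The key step is the symmetric function identity
\[
P_{\rho+\si}s_\tau=s_\rho s_\si s_\tau=P_{\rho+\tau}s_\si,
\]
which follows from two applications of \cref{schurpcores}: $P_{\rho+\si}=s_\rho s_\si$ and $P_{\rho+\tau}=s_\rho s_\tau$. The inner products with each $s_\la$ therefore agree, so for every $\mu$ the two multiplicities differ by the factor $2^{(\len{\rho+\si}-\len{\rho+\tau})/2}$. This gives the claimed equality in the RoCK block, and \cref{prop:independent_of_core} transfers it, with the same constant, to every \tbc $\ga$.

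The substantive input, namely the identity $s_\si s_\tau=P_{\si+\tau}$, is already available from \cref{schurpcores}. The main points needing care are therefore bookkeeping. First, the parity condition on $\ga$ must be achievable while keeping $\len\ga\gs w$; this holds because lengthening a core $\thrc{r}{l}$ by one changes its number of $1$-nodes, so some large $l$ of the right parity works, and if not, switching between $r=1$ and $r=2$ does. Second, the conjugate ${\alp1}'$ in \cref{rockdecomp3} must not spoil the argument, which is handled by the self-conjugacy of \tcps. Third, the constant of proportionality must be exactly as stated, which relies on \cref{prop:independent_of_core} preserving constants.
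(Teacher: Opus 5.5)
Your proposal is correct and is essentially the paper's proof. You pass to a RoCK block satisfying the parity condition via \cref{prop:independent_of_core}, apply \cref{rockdecomp3}, collapse the sum over $\ze$ to $\lan s_\la,P_{\alp0}s_{{\alp1}'}\ran$, and conclude from $P_{\rho+\si}s_\tau=s_\rho s_\si s_\tau=P_{\rho+\tau}s_\si$ via \cref{schurpcores}; since you only need this forward direction, you correctly avoid invoking invertibility of $A$.

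One small fix to your bookkeeping: $\thrc{1}{l}$ has $l(l-1)/2$ nodes of residue $1$, whose parity has period $4$ in $l$, so adding a row does not always flip the parity. Both parities nevertheless occur for arbitrarily large $l$, which is all you need.
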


\begin{proof}
Let $w=|\rho|+|\si|+|\tau|$. \cref{prop:independent_of_core} shows that if the result is true for some $\ga$ then it is true for every $\ga$, so we choose $\ga$ such that $\len\ga\gs w$ and such that $w$ plus the number of $1$-nodes of $\ga$ is even. We let $B$ be the RoCK block with \tbc $\ga$ and \tbw $w$.
Then the desired result is
\[
\cm{\widebar{\spn{(\ga+3\alp1)\sqcup3\alp0}}}{\phi(\ga\sqcup3\mu)}=2^{(\len{\alp0}-\len{\bep0})/2}\cm{\widebar{\spn{(\ga+3\bep1)\sqcup3\bep0}}}{\phi(\ga\sqcup3\mu)}
\]
for all $\mu\in\scrp(w)$.
Applying \cref{rockdecomp3} and using that the matrix $A$ is invertible, this is equivalent to
\begin{align}
\sum_\ze\lan P_{\alp0},s_\ze\ran\lan s_\la,s_\ze s_{{\alp1}'}\ran&=\sum_\ze\lan P_{\bep0},s_\ze\ran\lan s_\la,s_\ze s_{{\bep1}'}\ran
\nonumber
\\
\intertext{for all $\la\in\scrp(w)$.
Since the Schur functions are linearly independent, this is equivalent to}
 \sum_{\ze,\la}\lan P_{\alp0},s_\ze\ran \lan s_\la,s_\ze s_{{\alp1}'}\ran s_\la
&=
\sum_{\ze,\la}\lan P_{\bep0},s_\ze\ran \lan s_\la,s_\ze s_{{\bep1}'}\ran s_\la.
\label{eq:symfunctionsequality}
\end{align}
Using the orthonormality of the Schur functions with respect to \(\langle\ ,\ \rangle\),  the left-hand side of \Cref{eq:symfunctionsequality} is
\[
\sum_{\ze,\la}\lan P_{\alp0},s_\ze\ran \lan s_\la,s_\ze s_{{\alp1}'}\ran s_\la
    = \sum_\ze\lan P_{\alp0},s_\ze\ran s_\ze s_{{\alp1}'} \\
    = P_{\alp0}s_{{\alp1}'}
\]
and likewise the right-hand side is \(P_{\bep0}s_{{\bep1}'}\).
Thus both sides are equal to \(s_\rho s_\si s_\tau\) by \cref{schurpcores}, and the \lcnamecref{prop:if} follows.
\end{proof}

\section{Classification of proportional characters}\label{onlyifsec}

It remains to show the `only if' direction of our (reduced) main theorem \Cref{thm:main_reduced}, stated as \Cref{prop:only_if} below.

\begin{proposition}
\label{prop:only_if}
Suppose \(\al\) and \(\be\) are distinct \stps, and that the characters \(\spn{\al}\) and \(\spn{\be}\) of \(\hsss n\) are proportional in characteristic \(3\). Then there exist \tcps \(\rho\), \(\sigma\) and \(\tau\) such that
\begin{align*}
\alp0 &= \rho + \sigma, & \bep0 &= \rho + \tau, \\
\alp1 &= \tau,          &     \bep1 &= \sigma.
\end{align*}
\end{proposition}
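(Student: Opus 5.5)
We argue by induction on $n$ (equivalently on the \tbw $w$), using \cref{prop:independent_of_core} to move freely between \tbcs. Since $\bspn\al\propto\bspn\be$ forces equal contents, $\al$ and $\be$ share a \tbc $\ga$ and a weight $w$. By \cref{prop:independent_of_core} we may take $\ga=\thrc{r}{l}$ with $l$ as large as we like. Then \cref{lemma:weakly_3-separated-form} gives the explicit shapes $\al=(\ga+3\alp1)\sqcup3\alp0$ and $\be=(\ga+3\bep1)\sqcup3\bep0$. We may also pass to small cores (including $\emptyset$) whenever that is convenient.

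The inductive step uses \cref{prop:restriction_gives_prop_pair} and \cref{prop:weight_and_remove_bars_from_prop_pair} to produce smaller proportional pairs whose quotients are controlled.
\begin{enumerate}
\item In a RoCK configuration, removing all \esprms\ep with a suitable $\ep$ acts on the abacus by moving beads between adjacent runners. Combined with the runner-swapping description, this should remove one column of removable nodes from the Young diagram of $\alp1$, or remove the removable nodes of $\alp0$ (parts divisible by $3$ shrink to be $\equiv2$ modulo 3).
\item Because the numbers of removable and addable nodes must agree, we obtain equalities of removable and addable counts for the quotients. For example, we expect $\#\text{removable}(\alp1)+\#\text{removable nodes of }\alp0$, suitably interpreted, to equal the corresponding count for $\be$.
\item Taking $\ga=\emptyset$ is also useful. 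There \cref{prop:runner-swap_swaps_runners}(ii) shows that the $\ep=1$ swap conjugates the quotient, so $(\alp0,\alp1)\mapsto(\alp0,{\alp1}')$. This gives the symmetry needed to force the $\alp1$ parts to be self-conjugate, i.e.\ \tcps.
\end{enumerate}
In addition, bar removal for $k\in\{1,5,7,\dots\}$ coprime to 3 gives equal $k$-bar-weights. In large-core form, the $k$-bar-weights can be computed from $\alp0,\alp1$ and should pin down hook data of $\alp1$ and $\alp0$.

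By induction the smaller pair is either equal or of the stated $(\rho,\si,\tau)$ form. We then lift this back: we show that a quotient pair whose reduction has the triple form, together with the equal addable/removable counts, must itself have the triple form (or $\al=\be$). The base case $w\le1$ is immediate, since contents determine everything there.

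The main obstacle is this lifting step. One must show that the only quotient pairs compatible with all the numerical constraints are $\alp0=\rho+\si$, $\alp1=\tau$ and $\bep0=\rho+\tau$, $\bep1=\si$. I would first try to prove that $\alp1$ must be a \tcp, using the conjugation symmetry at core $\emptyset$ together with the equality of removable-node counts under repeated restriction. Next I would show $\alp0=\rho+\si$, with $\si=\bep1$, by comparing the largest bars of residue $r$ modulo 3, using \cref{lemma:weakly_3-separated-form}(ii) via Morris's rule with $k$ equal to that bar length. The $3$-singular bar lengths are unusable here, and Morris's rule with them is not available on $3$-regular classes. This makes it the delicate point, and it may require choosing $l$ so that the largest bar has length coprime to 3.
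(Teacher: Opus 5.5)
There is a genuine gap. The framework is fine: induction on $w$, free choice of core via \cref{prop:independent_of_core}, and smaller proportional pairs from \cref{prop:weight_and_remove_bars_from_prop_pair,prop:restriction_gives_prop_pair}. But the step you call the lifting step is the whole content of the proof, and the two devices you propose for it do not work.

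\emph{Item (1).} Take a core $\thrc1l$ with $l>\alp0_1+\len{\alp1}$, which includes any sufficiently large core. Then every positive bead on runner $0$ lies in a row that also has a bead on runner $1$. So pushing all beads left just copies runner $1$ onto runner $-1$, giving $\al^{-0}=\corandquot{\thrc2{l-1}}{\alp0}{\alp1}$. The quotient is unchanged; on such $\al$ the map $\runnerswap0{1-2l}$ is just $e_0^{(2l-1)}$, so you learn nothing.

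The missing idea is to take the core exactly at the threshold $l=\max\{\alp0_1+\len{\alp1},\bep0_1+\len{\bep1}\}$.
\begin{enumerate}
\item At this $l$, the partition attaining the maximum has exactly one $0$-addable node. The other would have none unless the two quantities agree, so \cref{prop:restriction_gives_prop_pair} forces equality.
\item The last bead on runner $0$ then sits opposite the first gap on runner $1$, and $\al^{-0}=\corandquot{\thrc2l}{\rowrem{\alp0}}{\colrem{\alp1}}$.
\end{enumerate}
Combine this with removing the unique largest bar of length $\equiv1\pmod6$ at a large core. That bar removal only deletes first rows, giving $(\rowrem{\alp0},\rowrem{\alp1})$; it does not identify $\alp0$ with $\rho+\si$. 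Such bars are automatically prime to $3$; the real issue is oddness, arranged by the parity of $l$. Induction then controls both $\rowrem{}$ and $\colrem{}$ of the quotients.

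\emph{Item (3).} The $\ep=1$ swap at $\ga=\emptyset$ only shows that the set of proportional pairs is invariant under $(\alp1,\bep1)\mapsto({\alp1}',{\bep1}')$. That is a symmetry, not a constraint on a single pair. Self-conjugacy would in any case be far from being a $2$-core, e.g.\ $(2,2)$. In the paper, $\alp1$ is a $2$-core because $\rowrem{\alp1}$ and $\colrem{\alp1}$ are the same $2$-core $\twoc c$, which forces $\alp1=\twoc{c+1}$ or $\alp1\in\{\emptyset,(1)\}$.

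Even with both reductions, the lift is not automatic, and most of the work is here. Combining them yields only two possibilities, both compatible with everything the reductions say:
\begin{enumerate}
\item $\alp0=\rho+\si+(d)$ and $\bep0=\rho+\tau+(d)$, with an undetermined first-row discrepancy $d\ge-1$;
\item the degenerate case $\{\alp1,\bep1\}=\{\emptyset,(1)\}$, $\alp0=\bep0+(1)$, with $\bep0$ arbitrary.
\end{enumerate}
The paper excludes the bad cases by explicit abacus computations.
\begin{enumerate}
\item At core $\emptyset$ it computes $\be^{-0}$ and shows $\be^{-0}\neq\al^{-0}$, while $(\be^{-0})^{(1)}$ is not a $2$-core: its first two parts differ by at least $2$, or it has a repeated part. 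This contradicts induction.
\item The leftover case $d=-1$, $\rho=\twoc2$, $\tau_1=\si_1+1$ is killed by showing $\al$ and $\be$ have different numbers of $0$-removable nodes.
\item The degenerate case is killed the same way, at core $\thrc1l$ with $l$ the least non-part of $\bep0$.
\end{enumerate}
Your proposal has no substitute for these arguments. The $k$-bar-weight equalities you invoke are never shown to constrain the quotients. So the proof is incomplete precisely at the step you flagged.
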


Throughout \cref{onlyifsec}, we assume that \((\alp0,\alp1), (\bep0,\bep1) \in \scrd \times \scrp\) are distinct \tbqs, and that \(\spn{\corandquot{\ga}{\alp0}{\alp1}}\) and \(\spn{\corandquot{\ga}{\bep0}{\bep1}}\) are proportional in characteristic $3$ for some (equivalently, any) \tbc \(\ga\). This implies in particular that (for any $\ga$) the \stps $\corandquot{\ga}{\alp0}{\alp1}$ and $\corandquot{\ga}{\bep0}{\bep1}$ have the same size. Since they have the same \tbc, they must also have the same \tbw; that is \(|\alp0|+|\alp1| = |\bep0|+|\bep1|\). Write \(w\) for this common \tbw.

Our proof of \cref{prop:only_if} is by induction on the \tbw \(w\). The claim is trivially true if \(w \in \set{0,1}\) (since all pairs of \tbqs of these weights are of the required form).
So suppose \(w \geq 2\).

\subsection{Bar removal}

We first deduce a constraint on the \tbqs by placing the quotients on a sufficiently large \tbc and considering the removal of a large odd bar.
Given a \stp $\al$, we write $\rowrem\al = (\al_2,\al_3,\dots)$ for the partition  obtained by removing the first row.

\begin{lemma}
\label{lemma:bar_removal_calulation}
Let \(l \in \N\) satisfy \(l \gs\max\{\len{\alp1},\len{\bep1}\}\) and \(l \equiv \max\{\alp0_1+\alp1_1, \bep0_1+\bep1_1\}+1 \ppmod{2}\).
Let \(\ga = \thrc{1}{l}\) and let \(\al = \corandquot{\ga}{\alp0}{\alp1}\) and \(\be = \corandquot{\ga}{\bep0}{\bep1}\). Then \(\al\) and \(\be\) each have a unique largest bar of length congruent to \(1\) modulo \(6\), and these bars are of equal length. Furthermore, if \(B_\al\) and \(B_\be\) denote these bars, then
\[
\al \setminus B_\al
    = \corandquot{\rowrem{\ga}}{\rowrem{\alp0}}{\rowrem{\alp1}} \qquad\text{and}\qquad
\be \setminus B_\be
    = \corandquot{\rowrem{\ga}}{\rowrem{\bep0}}{\rowrem{\bep1}}.
\]
\end{lemma}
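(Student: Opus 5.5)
Since \(l\geq\len{\alp1}\), \Cref{lemma:weakly_3-separated-form}\ref{weaksep} gives
\[
\al = (\thrc{1}{l}+3\alp1)\sqcup 3\alp0 ,
\]
and the analogous formula holds for \(\be\). I will work directly with these explicit forms. By \Cref{lemma:weakly_3-separated-form}\ref{largebar}, the unique largest bar of length \(\equiv1\pmod3\) in \(\al\) is the pair formed by the largest part divisible by \(3\) and the largest part not divisible by \(3\). These parts are \(3\alp0_1\) and \(3(l-1)+1+3\alp1_1\), so the bar has length \(3(l-1+\alp0_1+\alp1_1)+1\). (If \(\alp0=\emptyset\), the bar is the single part \(3(l-1+\alp1_1)+1\), which is the same formula with \(\alp0_1=0\).)

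First I would check that this bar is odd, and indeed \(\equiv1\pmod6\). Its length is \(\equiv l-1+\alp0_1+\alp1_1\pmod 2\) times \(3\), plus \(1\). Every bar of length \(\equiv1\pmod6\) has length \(\equiv1\pmod3\), so it suffices that the largest bar of length \(\equiv1\pmod3\) has length \(\equiv1\pmod6\), i.e.\ that \(l-1+\alp0_1+\alp1_1\) is even. The parity hypothesis on \(l\) only controls the maximum \(M=\max\{\alp0_1+\alp1_1,\bep0_1+\bep1_1\}\). So I need \(\alp0_1+\alp1_1=\bep0_1+\bep1_1\), and this is where proportionality enters.

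The key step is to apply \Cref{prop:weight_and_remove_bars_from_prop_pair} with \(k=3(l-1+M)+1\). This \(k\) is odd by the choice of \(l\), and is not divisible by \(3\). Proportionality of \(\bspn\al\) and \(\bspn\be\) then says that \(\al\) and \(\be\) have the same \(k\)-bar-weight.

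Say \(\al\) attains the maximum \(M\). Then \(\al\) has a \(k\)-bar, namely the one described above, so its \(k\)-bar-weight is positive, and hence so is that of \(\be\). Now \(k\) is larger than any bar of \(\be\) of length \(\equiv1\pmod3\) if \(\bep0_1+\bep1_1<M\). Indeed, all bars of \(\be\) of length \(\equiv1\pmod3\) are bounded by \(\be\)'s largest one, which has length \(3(l-1+\bep0_1+\bep1_1)+1<k\). So \(\be\) has no \(k\)-bar, a contradiction. Hence both sums equal \(M\), both largest bars have the common length \(k\), and this length is \(\equiv1\pmod6\). Uniqueness among bars of length \(\equiv1\pmod 6\) then follows from uniqueness among bars of length \(\equiv1\pmod3\).

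Finally I compute \(\al\setminus B_\al\). Removing the parts \(3\alp0_1\) and \(\ga_1+3\alp1_1\) from \((\ga+3\alp1)\sqcup3\alp0\) leaves
\[
(\rowrem\ga+3\rowrem{\alp1})\sqcup3\rowrem{\alp0}.
\]
Here \(\rowrem\ga=\thrc1{l-1}\) is a \tbc with \(l-1\geq\len{\rowrem{\alp1}}\). So by \Cref{lemma:weakly_3-separated-form}\ref{weaksep} again, this partition is \(\corandquot{\rowrem\ga}{\rowrem{\alp0}}{\rowrem{\alp1}}\), and the same computation applies to \(\be\).

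The edge cases need checking. If \(\alp0=\emptyset\), the bar is a single part, and removing it still deletes the first row of \(\alp1\) as required. If \(\alp1_1=0\), the first part is \(\ga_1\) itself. Neither case causes a problem.

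I expect the main obstacle to be the equal-length step, which combines the parity trick with the \(k\)-bar-weight equality. Everything else is bookkeeping with \Cref{lemma:weakly_3-separated-form}.
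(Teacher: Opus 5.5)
Your proof is correct and follows essentially the same route as the paper. Both arguments use \Cref{lemma:weakly_3-separated-form}\ref{weaksep} for the explicit form and part \ref{largebar} for the largest bar. Both then combine the parity choice of \(l\) with \Cref{prop:weight_and_remove_bars_from_prop_pair} to force both largest bars to have the common odd length \(3(l-1+M)+1\), and read off the remainders via \ref{weaksep} again.

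The only slip is cosmetic. You write \(\rowrem\ga=\thrc1{l-1}\), but when \(l=1\) this is \(\emptyset=\thrc{2}{0}\). The lemma still applies in that case, since then \(\rowrem{\alp1}=\emptyset\).
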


\begin{proof}
The choice of $l$ together with \cref{lemma:weakly_3-separated-form}\ref{weaksep} implies that
\(\al = (\ga + 3\alp1) \sqcup 3\alp0\)
and \(\be = (\ga + 3\bep1) \sqcup 3\bep0\).
By \cref{lemma:weakly_3-separated-form}\ref{largebar}, \(\al\) and \(\be\) have unique largest bars of length congruent to \(1 \ppmod{3}\), and each of these bars is given by the sum of the largest part divisible by $3$ and the largest part not divisible by $3$:
\begin{align*}
    |B_\al| &= (\ga + 3\alp1)_1 + 3\alp0_1 = 3(l-1 + \alp0_1 + \alp1_1) + 1,
\end{align*}
and likewise for \(\beta\).

Without loss of generality, suppose
\(
    \alp0_1 + \alp1_1 \geq \bep0_1+\bep1_1
\).
Then our choice of \(l\) specifies \(l-1 \equiv \alp0_1+\alp1_1 \ppmod{2}\), so that \(|B_\al| \equiv 1 \ppmod{6}\), and hence \(|B_\al|\) is odd. Then \Cref{prop:weight_and_remove_bars_from_prop_pair} shows that \(\be\) must also have a bar of this length, and by maximality and uniqueness of \(B_\be\), it must be \(B_\be\).

The description of these bars as the sums of the first part and the first part divisible by $3$ also tells us that removing them yields the following partitions:
\begin{align*}
    \al \setminus B_\al &= (\rowrem{\ga} + 3\rowrem{\alp1}) \sqcup 3\rowrem{\alp0}, \\
    \be \setminus B_\be &= (\rowrem{\ga} + 3\rowrem{\bep1}) \sqcup 3\rowrem{\bep0}.
\end{align*}
Our lower bound on \(l\) easily gives \(\len{\rowrem{\gamma}} = l-1 \geq \max\set{\len{\rowrem{\alp1}},\len{\rowrem{\bep1}}}\). Thus \(\corandquot{\rowrem{\ga}}{\rowrem{\alp0}}{\rowrem{\alp1}}\) and \(\corandquot{\rowrem{\ga}}{\rowrem{\bep0}}{\rowrem{\bep1}}\) are precisely the \stps written above, by \cref{lemma:weakly_3-separated-form}\ref{weaksep}.
\end{proof}

\begin{samepage}
\begin{proposition}
\label{prop:bar_removal}
Either:
\begin{enumerate}
    \item
    \label{item:bar_removal_equal}
    \(\rowrem{\alp0} = \rowrem{\bep0}\) and \(\rowrem{\alp1}=\rowrem{\bep1}\); or
    \item 
    \label{item:bar_removal_distinct}
    there exist \tcps \(\rho\), \(\sigma\) and \(\tau\) such that
\begin{align*}
\rowrem{\alp0} &= \rho + \sigma, & \rowrem{\bep0} &= \rho + \tau, \\
\rowrem{\alp1} &= \tau,          &     \rowrem{\bep1} &= \sigma.
\end{align*}
\end{enumerate}
\end{proposition}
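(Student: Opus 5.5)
Choose $l$ as in \cref{lemma:bar_removal_calulation}, put $\ga=\thrc1l$, and let $\al=\corandquot{\ga}{\alp0}{\alp1}$ and $\be=\corandquot{\ga}{\bep0}{\bep1}$. By \cref{prop:independent_of_core}, $\bspn\al\propto\bspn\be$.

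By \cref{lemma:bar_removal_calulation}, $\al$ and $\be$ have unique largest bars $B_\al$, $B_\be$ of length $\equiv1\ppmod6$, and these have a common length $k$. This $k$ is odd and not divisible by $3$. We now apply the argument of \cref{prop:weight_and_remove_bars_from_prop_pair}, removing only this one bar rather than all $k$-bars. Since $k$ exceeds half the size of $\al$ and of $\be$, each has $k$-bar-weight at most $1$. The uniqueness in \cref{lemma:bar_removal_calulation} shows that each has exactly one $k$-bar, so removing all $k$-bars is the same as removing $B_\al$ (respectively $B_\be$).

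Hence \cref{prop:weight_and_remove_bars_from_prop_pair} gives
\[
\bspn{\corandquot{\rowrem\ga}{\rowrem{\alp0}}{\rowrem{\alp1}}}\propto\bspn{\corandquot{\rowrem\ga}{\rowrem{\bep0}}{\rowrem{\bep1}}}.
\]
These two strict partitions share the $3$-bar-core $\rowrem\ga=\thrc1{l-1}$, and their $3$-bar-weights are equal because they are partitions of the same size with the same core. The common weight is
\[
w-\alp0_1-\alp1_1=w-\bep0_1-\bep1_1<w.
\]
The inequality is strict because at least one of $\alp0,\alp1$ is nonempty when $w\ge2$; the equality of the two expressions follows from $|B_\al|=|B_\be|$.

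If the two reduced $3$-bar-quotients coincide, we are in case \ref{item:bar_removal_equal}. Otherwise the quotients are distinct, the two characters are proportional, and the weight is smaller than $w$. The inductive hypothesis (\cref{prop:only_if} at smaller weight, via \cref{prop:independent_of_core} so the core does not matter) then supplies \tcps $\rho,\si,\tau$ as in \ref{item:bar_removal_distinct}.

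The main point needing care is the bar-removal step: we must confirm that $B_\al$ is the only $k$-bar, so that the cancellation argument of \cref{prop:weight_and_remove_bars_from_prop_pair} applies directly. This follows from maximality together with the size bound $2k>|\al|$, which holds because $l$ is large. Apart from this, the argument is bookkeeping of the induction.
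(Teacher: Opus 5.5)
Your argument has the same structure as the paper's proof: choose $l$ as in \cref{lemma:bar_removal_calulation}, transfer proportionality to $\al\setminus B_\al$ and $\be\setminus B_\be$, and apply the inductive hypothesis. Your check that the weight drops to $w-\alp0_1-\alp1_1<w$ is correct, and the paper leaves it implicit.

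However, the step you single out as needing care rests on a false claim. The bound $2k>|\al|$ fails in general, and taking $l$ larger makes it worse. Indeed $|\al|=|\thrc1l|+3w=l(3l-1)/2+3w$ is quadratic in $l$, while $k=3(l-1+\alp0_1+\alp1_1)+1$ is linear in $l$. A concrete case is the genuinely proportional pair $\alp0=\bep1=(3,2,1)$, $\alp1=\bep0=(4,3,2,1)$ with the minimal admissible $l=4$. Then $\al=(22,16,10,9,6,4,3)$ and $k=31$, so $|\al|=70>62=2k$. Separately, `exactly one $k$-bar' does not by itself give `$k$-bar-weight $1$', because removing $B_\al$ could in principle create a new $k$-bar.

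The gap is easy to close, in either of two ways.

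The first is to apply Morris's rule once instead of citing \cref{prop:weight_and_remove_bars_from_prop_pair} verbatim. Uniqueness of the $k$-bar gives $S(\al,k)=\{\al\setminus B_\al\}$ and $S(\be,k)=\{\be\setminus B_\be\}$. So \cref{spinmurnak} yields $\spn\al(\nu\sqcup(k))=d\,\spn{\al\setminus B_\al}(\nu)$ with $d\neq0$, and similarly for $\be$. Letting $\nu$ run over partitions of $n-k$ into odd parts not divisible by $3$ then transfers proportionality, with no statement about weights needed.

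The second is to verify directly that $\al\setminus B_\al=(\rowrem\ga+3\rowrem{\alp1})\sqcup3\rowrem{\alp0}$ is a $k$-bar-core.
\begin{itemize}
\item All its parts are less than $k$ and congruent to $0$ or $1$ modulo $3$.
\item So a $k$-bar could only be a part divisible by $3$ plus a part not divisible by $3$.
\item Any such sum is at most $3\alp0_2+3(l-2+\alp1_2)+1<k$.
\end{itemize}
Hence the $k$-bar-weight is exactly $1$, and \cref{prop:weight_and_remove_bars_from_prop_pair} applies as in the paper.
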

\end{samepage}

\begin{proof}
Choose $l$ as in \cref{lemma:bar_removal_calulation}, and let $\al,\be,B_\al,B_\be$ be as in the proof of that \lcnamecref{lemma:bar_removal_calulation}. By \Cref{prop:weight_and_remove_bars_from_prop_pair} the Brauer characters \(\br{\spn{\al \setminus B_\al}}\) and \(\br{\spn{\be \setminus B_\be}}\) are proportional. So by the inductive hypothesis, the \tbqs $(\rowrem{\alp0},\rowrem{\alp1})$ and $(\rowrem{\bep0},\rowrem{\bep1})$ are either equal or of the form specified in \Cref{prop:only_if}, yielding \Cref{item:bar_removal_equal} or \Cref{item:bar_removal_distinct} respectively.
\end{proof}

\subsection{\texorpdfstring{\(0\)}{0}-restriction}

We next deduce a constraint on the \tbqs by considering the removal of all \esprms0, after placing the quotients on a \tbc of precise size.
Given a \stp $\al$, we write $\colrem\al = (\al_1-1,\al_2-1,\dots,\al_{\len{\al}}-1,0,\ldots)$ for the partition  obtained by removing the first column.

\begin{lemma}
\label{lemma:restriction_calc}
Let \(l = \max\set{\alp0_1 + \len{\alp1}, \bep0_1 + \len{\bep1}}\).
Let \(\ga = \thrc{1}{l}\), and let \(\al = \corandquot{\ga}{\alp0}{\alp1}\) and \(\be = \corandquot{\ga}{\bep0}{\bep1}\).
Then
\begin{align*}
 \al^{-0} &= \corandquot{\thrc2l}{\rowrem{\alp0}}{\colrem{\alp1}}, \\
  \be^{-0} &= \corandquot{\thrc2l}{\rowrem{\bep0}}{\colrem{\bep1}}.
\end{align*}
\end{lemma}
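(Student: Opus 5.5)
The plan is to compute $\al^{-0}$ directly on the abacus, combining \cref{lemma:weakly_3-separated-form} with \cref{lemma:node_removal_bead_move_correspondence}. The statement is symmetric in $\al$ and $\be$, so I treat $\al$ only. I expect the choice of $l$ to be exact for $\al$ only after a preliminary step that uses proportionality.

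\textbf{Step 1: reduce to $l=\alp0_1+\len{\alp1}$.} Since $l\gs\len{\alp1}$, \cref{lemma:weakly_3-separated-form}\ref{weaksep} gives $\al=(\ga+3\alp1)\sqcup3\alp0$. So on the \abd the positive half of runner $-1$ is empty. Row $j\gs1$ has a bead on runner $0$ \iff $j\in\alp0$. The positive beads on runner $1$ lie in the rows $l-i+\alp1_i$ for $1\ls i\ls l$. For $i>\len{\alp1}$ these are exactly the rows $0,1,\dots,l-\len{\alp1}-1$. Row $l-\len{\alp1}$ has no bead on runner $1$.

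By \cref{prop:restriction_gives_prop_pair}, $\al$ and $\be$ have the same number of \esprms0. I will count these nodes on the abacus for a weakly separated partition, and I expect the count to be a function of $l$ and of $\alp0_1+\len{\alp1}$. Equating the counts for $\al$ and $\be$ should then force $\alp0_1+\len{\alp1}=\be^{(0)}_1+\len{\bep1}=l$. I regard this as the main obstacle: without it the formula fails, because if $\alp0_1<l-\len{\alp1}$ the largest part of $\alp0$ would survive on runner $0$. If the count turns out not to separate the two cases, I would instead compare numbers of \espams0, again using \cref{prop:restriction_gives_prop_pair}.

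\textbf{Step 2: slide beads left in each row.} By \cref{lemma:node_removal_bead_move_correspondence}, the \abd of $\al^{-0}$ is obtained by sliding beads as far left as possible. It suffices to track the positive positions, since the negative half is then determined by the rule that position $r$ is occupied \iff position $-r$ is empty. In row $j\gs1$, let $c_j$ be the number of beads on runners $0$ and $1$. After sliding, row $j$ has a bead on runner $-1$ \iff $c_j\gs1$, and a bead on runner $0$ \iff $c_j=2$. In row $0$, the bead at position $1$ (if present) moves to $-1$.

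Since $l=\alp0_1+\len{\alp1}$, every part of $\alp0$ other than $\alp0_1$ lies in a row that also has a runner-$1$ bead, while row $\alp0_1$ does not. Hence runner $0$ of the new display encodes $\rowrem{\alp0}$, and runner $1$ is empty on its positive half.

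\textbf{Step 3: identify the core and the quotient.} The positive beads of runner $-1$ occupy rows $1,\dots,l-\len{\alp1}$ together with the rows $l-i+\alp1_i$ for $i\ls\len{\alp1}$. That is $l$ beads in total, so $\delt{\al^{-0}}=-l$ and the \tbc of $\al^{-0}$ is $\thrc2l$, with runner $-1$ largest. (If $\alp0=\emptyset$ the same count works, because then row $0$ had no bead.) To read $\alp1$ of $\al^{-0}$, I also count the bead now at position $-1$ in row $0$, and I expect no other gaps in the negative half. Then the $i$th lowest bead, in row $l-i+\alp1_i$, has $l+1-i$ beads above it, hence $\alp1_i-1$ gaps above it. This gives $\colrem{\alp1}$, so $\al^{-0}=\corandquot{\thrc2l}{\rowrem{\alp0}}{\colrem{\alp1}}$. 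The same argument applies to $\be$.
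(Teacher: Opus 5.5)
Your proposal is correct and takes essentially the paper's route: use \cref{prop:restriction_gives_prop_pair} to force $\alp0_1+\len{\alp1}=\bep0_1+\len{\bep1}=l$, then slide all beads left and read off the \tbc and \tbq. The count you left unverified in Step~1 does work. There are $2l$ \esprms0 when $\alp0_1+\len{\alp1}=l$ and $2l-1$ when the inequality is strict, so the counts separate the two cases; the paper instead uses your fallback, counting \espams0 (one versus none), which is quicker.
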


\begin{proof}
Without loss of generality, suppose
\begin{equation}
\label{eq:inequality_for_0_restriction}
    \alp0_1 + \len{\alp1} \geq \bep0 + \len{\bep1}
\end{equation}
so that \(l = \alp0_1 + \len{\alp1}\). Observe that in the \abd for \(\al\), the last bead on runner \(0\) appears in the same row as the first gap on runner \(1\), while there are no beads in the positive half of runner \(-1\). Thus \(\al\) has exactly one \espam0 (addable to the largest part congruent to \(0 \ppmod{3}\)).

Now suppose that the inequality \Cref{eq:inequality_for_0_restriction} is strict. Then in the \abd for \(\be\), the last bead on runner \(0\) appears in an earlier row than the first gap on runner \(1\), while there are no beads in the positive half of runner \(-1\). This means that $\be$ has no \espams0. This contradicts \Cref{prop:restriction_gives_prop_pair}. 
So in fact equality holds in \Cref{eq:inequality_for_0_restriction}, and for both \(\al\) and \(\be\) the last bead on runner \(0\) is in the same row as the first gap on runner \(1\).

We now compute the \stp $\al^{-0}$ obtained by removing all \esprms0 from \(\al\). This is achieved by shifting all beads in the \abd as far left as possible (\Cref{lemma:node_removal_bead_move_correspondence}). See \Cref{fig:0restriction_calculation} for an example of this computation.

Observe that in the \abd for \(\al\), every bead on runner \(0\) has a bead directly to its right, except the last bead on runner \(0\) which has the first gap on runner \(1\) directly to its right. Therefore, shifting all beads left has the effect of swapping runners \(1\) and \(-1\), and then moving the last bead on runner $0$ to runner $-1$. This means in particular that in $\al^{-0}$ there are no beads in the positive half of runner $1$, and $l$ beads in the positive half of runner $-1$. This implies that the \tbc of $\al^{-0}$ is $\ga_{2,l}$, and that the \(1\)st component of the \tbq of $\al^{-0}$ is determined by runner $-1$. Moreover, moving the last bead from runner \(0\) deletes the largest part of the \(0\)th component of the quotient, while filling in the first gap on runner \(-1\) decreases by \(1\) the size of every part of the \(1\)st component of the quotient. So the \tbq of  \(\al^{-0}\) is as claimed. Exactly the same argument applies for $\be$.
\end{proof}

\begin{figure}[ht]
\[
\sabacus(1,%
vvv,%
nxA,
nnA,
nbA,
nbA,
nnA,
nbA,
nnA,
nbn,
nnn,
nnA,
nnn,
nnA,
nnn,
nnn,
nnA,
nnA,
nnn,
vvv)
\quad \leadsto \quad
\sabacus(1,%
vvv,%
Axn,
Ann,
Abn,
Abn,
Ann,
Abn,
Ann,
bnn,
nnn,
Ann,
nnn,
Ann,
nnn,
nnn,
Ann,
Ann,
nnn,
vvv)
\]
%
\caption{An example to illustrate the computation of \(\al^{-0}\) as described in \Cref{lemma:restriction_calc} (with only nonnegative rows depicted in the abacus displays; the other rows can be deduced from these).
In this example \(\alp0 = (7,5,3,2)\) and \(\alp1=(5,5,3,2)\), 
and hence \(l =11\). The \(3\)-bar-\abd for \(\al = \corandquot{\thrc{1}{l}}{\alp0}{\alp1}\) is drawn on the left (note the key property that the first row containing a gap on runner \(1\) is the unique row containing a bead on runner \(0\) but not on runner \(1\)). The \(3\)-bar-\abd for \(\al^{-0}\), obtained by shifting all beads as far left as possible, is drawn on the right.
The beads originally on runner \(1\) in the \abd for \(\al^{-0}\) have been coloured blue.
We see that \((\al^{-0})^{(0)} = (5,3,2) = \rowrem{(7,5,3,2)}\) and \((\al^{-0})^{(1)} = (4,4,2,1) = \colrem{(5,5,3,2)}\).
}
\label{fig:0restriction_calculation}
\end{figure}

\begin{samepage}
\begin{proposition}
\label{prop:0-restriction}
Either:
\begin{enumerate}
    \item
    \label{item:0-restriction_equal}
    \(\rowrem{\alp0} = \rowrem{\bep0}\) and \(\colrem{\alp1}=\colrem{\bep1}\); or
    \item 
    \label{item:0-restriction_distinct}
    there exist \tcps \(\rho\), \(\sigma\) and \(\tau\) such that
\begin{align*}
\rowrem{\alp0} &= \rho + \sigma, & \rowrem{\bep0} &= \rho + \tau, \\
\colrem{\alp1} &= \tau,          &     \colrem{\bep1} &= \sigma.
\end{align*}
\end{enumerate}
\end{proposition}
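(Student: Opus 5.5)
The plan mirrors the proof of \Cref{prop:bar_removal}, with $0$-restriction in place of bar removal. Take $l=\max\set{\alp0_1+\len{\alp1},\bep0_1+\len{\bep1}}$ and $\ga=\thrc1l$, as in \Cref{lemma:restriction_calc}. Set $\al=\corandquot{\ga}{\alp0}{\alp1}$ and $\be=\corandquot{\ga}{\bep0}{\bep1}$. By the standing assumption of this section together with \Cref{prop:independent_of_core}, the characters $\bspn\al$ and $\bspn\be$ are proportional. Then \Cref{prop:restriction_gives_prop_pair}, applied with $\ep=0$, gives $\bspn{\al^{-0}}\propto\bspn{\be^{-0}}$.

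Next, \Cref{lemma:restriction_calc} identifies $\al^{-0}$ and $\be^{-0}$ as the partitions with common \tbc $\thrc2l$ and \tbqs $(\rowrem{\alp0},\colrem{\alp1})$ and $(\rowrem{\bep0},\colrem{\bep1})$ respectively. Their common \tbw is $w-\alp0_1-\len{\alp1}$. This is strictly less than $w$ whenever $l\geq1$, which holds because $w\geq 2$ forces at least one of the four quotient components to be nonempty, and equality in the proof of \Cref{lemma:restriction_calc} makes the two values of $\alp0_1+\len{\alp1}$ and $\bep0_1+\len{\bep1}$ coincide.

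If the two reduced quotients are equal, we are in case \Cref{item:0-restriction_equal}. Otherwise, the inductive hypothesis for \Cref{prop:only_if} at this smaller weight applies, and it yields \tcps $\rho,\si,\tau$ of the form required in \Cref{item:0-restriction_distinct}. Here we use that proportionality depends only on the quotients and not on the core, by \Cref{prop:independent_of_core}, so the hypothesis may be applied with the core $\thrc2l$.

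The only delicate point, and the one I expect to be the main obstacle, is making sure the induction is applied legitimately. First, the weight must genuinely drop. If $l=0$, both quotients are empty, which is trivial, so I will note that this cannot occur when $w\geq2$. Second, the ``distinct'' hypothesis of \Cref{prop:only_if} must be handled by the case split above rather than assumed. Everything else is a direct combination of \Cref{prop:restriction_gives_prop_pair} with \Cref{lemma:restriction_calc}.
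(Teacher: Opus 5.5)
Your proposal is correct and follows the paper's proof. You combine \Cref{prop:restriction_gives_prop_pair} with \Cref{lemma:restriction_calc}, then apply the inductive hypothesis to the reduced quotients, using core-independence from \Cref{prop:independent_of_core} and splitting on whether the reduced quotients are equal. Your explicit check that the \tbw drops by $l\geq1$ is a detail the paper leaves implicit, and it is right.
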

\end{samepage}

\begin{proof}
Let $\al$ and $\be$ be as in \Cref{prop:0-restriction}. By \Cref{prop:restriction_gives_prop_pair} the Brauer characters \(\br{\spn{\al^{-0}}}\) and \(\br{\spn{\be^{-0}}}\) are also proportional. So by the inductive hypothesis, the \tbqs $(\rowrem{\alp0},\colrem{\alp1})$ and $(\rowrem{\bep0},\colrem{\bep1})$ are either equal or of the form specified in \cref{prop:only_if}, yielding \Cref{item:0-restriction_equal} or \Cref{item:0-restriction_distinct} respectively.
\end{proof}

\subsection{Inductive step}

To complete the proof of \cref{prop:only_if} by induction, we consider each of the four combinations of possibilities arising from \Cref{prop:bar_removal,prop:0-restriction}, which correspond to whether or not the pairs of partitions found to label proportional characters using the inductive hypothesis are in fact equal partitions.

Recall that we are assuming that \((\alp0, \alp1), (\bep0,\bep1)\in\scrd\times\scrp\) are distinct, with $|\alp0|+|\alp1|=|\bep0|+|\bep1|$. Our aim is to show that
\[
(\alp0,\alp1,\bep0,\bep1)=(\rho+\si,\tau,\rho+\tau,\si)
\]
for some \tcps $\rho,\si,\tau$.

\subsubsection{The case that \texorpdfstring{\labelcref{prop:bar_removal}\Cref{item:bar_removal_equal}}{6.3(i)} and \texorpdfstring{\labelcref{prop:0-restriction}\Cref{item:0-restriction_equal}}{6.5(i)} hold}
\label{case:two-equal}

Suppose \(\rowrem{\alp0} = \rowrem{\bep0}\), \(\rowrem{\alp1} = \rowrem{\bep1}\) and \(\colrem{\alp1} = \colrem{\bep1}\).

Having \(\rowrem{\alp1} = \rowrem{\bep1}\) and \(\colrem{\alp1} = \colrem{\bep1}\) implies that either \(\alp1 = \bep1\) or \(\set{\alp1,\bep1} = \set{\emptyset, (1)}\). If \(\alp1 = \bep1\), then we deduce \(|\alp0| = |\bep0|\), which together with \(\rowrem{\alp0} = \rowrem{\bep0}\) implies \(\alp0 = \bep0\), contradicting the assumption that \((\alp0, \alp1), (\bep0,\bep1)\) are distinct.

So we suppose (without loss of generality) that \(\alp1 = \emptyset\) and \(\bep1 = (1)\). Then, using \(\rowrem{\alp0} = \rowrem{\bep0}\), we obtain \(\alp0 = \bep0 + (1)\).

If \(\bep0\) is a \tcp, then \((\alp0,\alp1)\) and \((\bep0,\bep1)\) are of the required form (with $(\rho,\si,\tau)=(\bep0,(1),\emptyset)$). So suppose \(\bep0\) is not a \tcp; let \(l \geq 1 = \max\{\len{\alp1},\len{\bep1}\}\) be minimal such that \(l \not\in \bep0\). Then $l<\bep0_1$ because $\bep0$ is not a \tcp, and therefore $l\not\in\alp0$ also.

Let \(\ga=\thrc{1}{l}\) and let \(\al = \corandquot{\ga}{\alp0}{\alp1}\) and \(\be = \corandquot{\ga}{\bep0}{\bep1}\). Then \Cref{lemma:weakly_3-separated-form}\ref{weaksep} gives
\begin{align*}
    \al &= (\ga + 3\alp1) \sqcup 3\alp0 = \ga \sqcup 3(\bep0 + (1)), \\
    \be &= (\ga + 3\bep1) \sqcup 3\bep0 = (\ga+(3)) \sqcup3\bep0.
\end{align*}
We will derive a contradiction by using the \abds of $\al$ and $\be$ to compare the number of \esprms0 of $\al$ and $\be$. The choice of $l$ means that, in both \abds, the positive half of runner \(-1\) is empty, and runner \(0\) has a bead in every row from $1$ to \(l-1\) and a gap in row \(l\).
Meanwhile \Cref{lemma:pos_of_gap_and_no_of_beads} 
shows that the first gap on runner \(1\) is in row \(l\) in \(\al\), and in row \(l-1\) in \(\be\).
Furthermore, since \(\bep1 = (1)\), there is a bead on runner \(1\) in row \(l\), in the \abd for \(\be\).
Rows \(0\) to \(l\) are illustrated in \Cref{fig:lrows_casetwoequal}.

\begin{figure}
\makeatletter
\centering
\begin{tikzpicture}[bend angle=0, scale=\abas*1.4,baseline={([yshift=-.8ex]current bounding box.center)}]
\@bacus
vvv,%
nxb,
nbb,
nbb,
vvv,%
nbb,
nbb,
nnn,
vvv.%
\coordinate[at={(-\abah*1.2,-\abav*0.5)}] (lhsz);
\node[abbelow, at={(\abah,-\abav*9)}] {\(\al\)};
\useasboundingbox (current bounding box);
\node[ablhs, at={($(lhsz)+(0,-\abav*5)$)}] {row \(l{-}1\)};
\node[ablhs, at={($(lhsz)+(0,-\abav*6)$)}] {row \(l\phantom{-1}\)};
\end{tikzpicture}
\quad\;
\begin{tikzpicture}[bend angle=0, scale=\abas*1.4,baseline={([yshift=-.8ex]current bounding box.center)}
]
\@bacus
vvv,%
nxb,
nbb,
nbb,
vvv,%
nbb,
nbn,
nnb,
vvv.%
\node[abbelow, at={(\abah,-\abav*9)}] {\(\be\)};
\end{tikzpicture}
\caption{Rows \(0\) to \(l\) of the \abds for \(\al\) and \(\be\) considered in \Cref{case:two-equal}.}
\label{fig:lrows_casetwoequal}
\makeatother
\end{figure}

We now compare numbers of \esprms0. In rows \(0\) to \(l-2\), the \abds are identical, so there is no difference in number of \esprms0. In rows \(l+1\) and below, there are beads only on runner \(0\), and an equal number of such beads on the two displays (namely, the number of parts of $\bep0$ greater than~$l$), so there is no difference in number of \esprms0.
In rows \(l-1\) and \(l\), \(\al\) has two \esprms0 (both on row \(l-1\)) while \(\be\) has three \esprms0 (one on row \(l-1\), two on row \(l\)).
Overall, then, \(\be\) has one more \esprm0 than \(\al\).
This contradicts \Cref{prop:restriction_gives_prop_pair}.

\subsubsection{The case that
\texorpdfstring{\labelcref{prop:bar_removal}\Cref{item:bar_removal_equal}}{6.3(i)}
and
\texorpdfstring{\labelcref{prop:0-restriction}\Cref{item:0-restriction_distinct}}{6.5(ii)}
hold}

Suppose \(\rowrem{\alp0} = \rowrem{\bep0}\), \(\rowrem{\alp1} = \rowrem{\bep1}\), and that     there exist \tcps \(\rho\), \(\sigma\) and \(\tau\) such that
\begin{align*}
\rowrem{\alp0} &= \rho + \sigma, & \rowrem{\bep0} &= \rho + \tau, \\
\colrem{\alp1} &= \tau,          &     \colrem{\bep1} &= \sigma.
\end{align*}
We deduce \(\rho+\sigma = \rho+\tau\) and hence \(\sigma = \tau\).
Then \(\colrem{\alp1} = \colrem{\bep1}\), so we are in case \Cref{case:two-equal}.

\subsubsection{The case that
\texorpdfstring{\labelcref{prop:bar_removal}\Cref{item:bar_removal_distinct}}{6.3(ii)}
and
\texorpdfstring{\labelcref{prop:0-restriction}\Cref{item:0-restriction_equal}}{6.5(i)}
hold
}

Suppose \(\rowrem{\alp0} = \rowrem{\bep0}\), \(\colrem{\alp1} = \colrem{\bep1}\), and that     there exist \tcps \(\rho\), \(\sigma\) and \(\tau\) such that
\begin{align*}
\rowrem{\alp0} &= \rho + \sigma, & \rowrem{\bep0} &= \rho + \tau, \\
\rowrem{\alp1} &= \tau,          &     \rowrem{\bep1} &= \sigma.
\end{align*}
We deduce \(\rho+\sigma = \rho+\tau\) and hence \(\sigma=\tau\).
Then \(\rowrem{\alp1} = \rowrem{\bep1}\), so we are in case \Cref{case:two-equal}.

\subsubsection{The case that
\texorpdfstring{\labelcref{prop:bar_removal}\Cref{item:bar_removal_distinct}}{6.3(ii)}
and
\texorpdfstring{\labelcref{prop:0-restriction}\Cref{item:0-restriction_distinct}}{6.5(ii)}
hold}
\label{subsec:neither}

This final case requires considerably more work, and will occupy the remainder of this section. Suppose there exist integers \(a,b,c,a',b',c' \geq 0\) such that
\begin{align*}
\rowrem{\alp0} &= \twoc{a} + \twoc{b} = \twoc{a'}+\twoc{b'},
    & \rowrem{\bep0} &= \twoc{a} + \twoc{c} = \twoc{a'}+\twoc{c'}, \\
\rowrem{\alp1} &= \twoc{c},          &     \rowrem{\bep1} &= \twoc{b}, \\
\colrem{\alp1} &= \twoc{c'},          &     \colrem{\bep1} &= \twoc{b'}.
\end{align*}
The equality \(\twoc{a} + \twoc{b} = \twoc{a'}+\twoc{b'}\) implies \(\set{a,b} = \set{a',b'}\), and likewise \(\set{a,c} = \set{a',c'}\). Observe that if \(a \neq a'\), then \(a=b'=c'\) and \(a'=b=c\), and hence \(\rowrem{\alp1}=\rowrem{\bep1}\) and \(\colrem{\alp1}=\colrem{\bep1}\), and we are in case \Cref{case:two-equal}.
So suppose \(a=a'\), and hence \(b=b'\) and \(c=c'\).

When \(d=-1\), we abuse notation by writing $\la+(-1)$ to mean $(\la_1-1,\la_2,\la_3,\dots)$.

\begin{lemma}
\label{lemma:reparametrise_cores}
There exist \tcps \(\rho, \sigma, \tau\) and an integer \(d \geq -1\) such that
\begin{align*}
\alp0 &= \rho + \sigma + (d), & \bep0 &= \rho + \tau + (d), \\
\alp1 &= \tau,          &     \bep1 &= \sigma.
\end{align*}
\end{lemma}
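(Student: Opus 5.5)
The plan is to reconstruct $\alp1$, $\bep1$ from the row- and column-removals, and then to read off $\alp0$, $\bep0$ from their first parts together with the size constraint $|\alp0|+|\alp1|=|\bep0|+|\bep1|$.

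\emph{Step 1: the $1$-components.} Here $\rowrem{\alp1}=\twoc c$ and $\colrem{\alp1}=\twoc c$. If $c\geq1$, then $\rowrem{\alp1}=\twoc c$ forces $\alp1_2=c$, $\alp1_3=c-1,\dots,\alp1_{c+1}=1$ and $\len{\alp1}=c+1$. Comparing the first entry of $\colrem{\alp1}$ with that of $\twoc c$ gives $\alp1_1-1=c$, so $\alp1=\twoc{c+1}$. If $c=0$, then $\alp1$ has at most one part and $\colrem{\alp1}=\emptyset$, so $\alp1\in\set{\emptyset,(1)}=\set{\twoc0,\twoc1}$. Thus in all cases $\tau:=\alp1$ is a \tcp with $\rowrem\tau=\colrem\tau=\twoc c$, and $\tau=\twoc{c+1}$ unless $c=0$ and $\tau=\emptyset$. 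The same argument applied to $\bep1$ gives a \tcp $\si:=\bep1$ with $\rowrem\si=\twoc b$.

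\emph{Step 2: choose $\rho$ and $d$.} Take $\rho=\twoc{a+1}$. Then $\rowrem{(\rho+\si)}=\twoc a+\rowrem\si=\twoc a+\twoc b=\rowrem{\alp0}$, and similarly $\rowrem{(\rho+\tau)}=\rowrem{\bep0}$. Hence $\alp0$ and $\rho+\si$ differ only in the first part, and likewise $\bep0$ and $\rho+\tau$. Define
\[
d=\alp0_1-(\rho+\si)_1, \qquad d'=\bep0_1-(\rho+\tau)_1 .
\]
Then $d=d'$ is forced by the size equality: the condition $|\alp0|+|\alp1|=|\bep0|+|\bep1|$ reads
\[
|\rho+\si|+d+|\tau| \;=\; |\rho+\tau|+d'+|\si|,
\]
and $|\rho+\si|=|\rho|+|\si|$, $|\rho+\tau|=|\rho|+|\tau|$, so the two sides agree exactly when $d=d'$.

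\emph{Step 3: the bound $d\geq-1$.} Since $\alp0$ is strict, whenever $\alp0_2=a+b'>0$ (where $b'=\si_2$, equal to $b$ in the generic case) we get $\alp0_1\geq\alp0_2+1$. This yields $d\geq -1$ when $\si=\twoc{b+1}$ and $a+b\geq1$. The remaining configurations need separate handling, and I expect this to be the main (fiddly rather than deep) obstacle. These are the configurations where $a=0$, or where $\si=\emptyset$ or $\tau=\emptyset$ with $b=0$ or $c=0$, or where $\alp0$ or $\bep0$ is very short. In them the naive choice may produce $d\leq-2$, so I would exploit the freedom in choosing $\rho$ and swapping roles.

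For the degenerate cases I would try the following adjustments first.
\begin{itemize}
\item When $a=0$, the alternative $\rho=\emptyset$ also satisfies $\rowrem{(\rho+\si)}=\rowrem{\alp0}$. Switching from $\rho=\twoc1$ to $\rho=\emptyset$ increases $d$ by $1$, and $d=d'$ is preserved by the same size computation.
\item When, say, $a+b=0$, we have $\alp0\in\set{\emptyset,(m)}$, and the check is a short direct computation using $\len{\alp0}\leq 1$.
\end{itemize}
In every case the verification that $d=d'$ persists is the identical size computation from Step 2. Using the stated abuse of notation for $d=-1$, this yields the required $\rho,\si,\tau,d$.
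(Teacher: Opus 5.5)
Your Steps~1 and~2 are exactly the paper's argument. You make the same choices $\tau=\alpha^{(1)}$, $\sigma=\beta^{(1)}$, $\rho=\kappa_{a+1}$, and use the same size computation to force $d=d'$.

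The gap is Step~3, which you leave as a list of cases you ``would try''. It closes quickly, and your list of remaining configurations is much larger than it needs to be. First, $\sigma_2=b$ always, since $\mathrm{R}\sigma=\kappa_b$, so there is no separate $b'$. Hence $\alpha^{(0)}_2=a+b$. Whenever $a+b\geq1$, strictness gives $(a+1)+\sigma_1+d=\alpha^{(0)}_1\geq a+b+1$, i.e.\ $d\geq b-\sigma_1$. This bound is $-1$ if $\sigma=\kappa_{b+1}$, and $0$ if $\sigma=\varnothing$ (which forces $b=0$). Nothing about $\tau$ or $\beta^{(0)}$ is needed, because $d=d'$ is already established.

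The only case left is $a=b=0$. Then $\alpha^{(0)}$ has at most one part and $\sigma\in\{\varnothing,(1)\}$. So $d=\alpha^{(0)}_1-1-\sigma_1\geq-2$, with equality only if $\alpha^{(0)}=\varnothing$ and $\sigma=(1)$. Your proposed switch to $\rho=\varnothing$ is valid here since $a=0$, and it preserves $d=d'$. It raises $d$ to $-1$ and finishes the proof.

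Alternatively, this configuration cannot occur under the standing hypotheses. With $\rho=(1)$ and $d'=-2$, we get $\beta^{(0)}=(\tau_1-1,\tau_2,\tau_3,\dots)$. This is non-strict if $\tau=\kappa_{c+1}$ with $c\geq1$. It is not a partition if $\tau=\varnothing$. It equals $\varnothing$ if $\tau=(1)$, and then the two quotients coincide, contradicting distinctness.

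Incidentally, the paper disposes of $d\leq-2$ in one sentence: ``$\rho_1+\sigma_1+d\leq\rho_2+\sigma_2$, so $\alpha^{(0)}$ is not strict''. That sentence tacitly skips exactly this edge case, since $\alpha^{(0)}=\varnothing$ is strict. So your caution about degenerate cases was warranted, but it concerns only this single configuration.
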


\begin{proof}
We let $\tau=\alp1$ and $\si=\bep1$, and we must show that these partitions are \tcps. By assumption \(\rowrem\tau = \colrem\tau= \twoc{c}\).
If \(c > 0\), then \(\rowrem\tau= \twoc{c}\) implies \(\tau = \twoc{c} \sqcup (x)\) for some \(x\gs c\), and then \(\colrem{\tau} = \twoc{c}\) implies
\[
\twoc c=\colrem{(\twoc{c} \sqcup (x))} = \twoc{c-1} \sqcup (x-1);
\]
thus \(x=c+1\) and hence \(\tau= \twoc{c+1}\). If \(c=0\), then \(\rowrem\tau=\colrem\tau=\emptyset\) and so \(\tau\in\set{ \emptyset, (1)}\). In any case \(\tau\) is a \tcp. Likewise \(\si\) is a \tcp.

Let \(\rho = \twoc{a+1}\).
Observe that \(\rowrem{(\rho + \sigma)} = \twoc{a}+\twoc{b} = \rowrem{\alp0}\), so that \(\alp0\) differs from \(\rho+\sigma\) only possibly in the first row, i.e.~there exists \(d \in \Z\) such that \(\alp0 = \rho + \sigma + (d)\).
Likewise there exists \(d'\in \Z\) such that \(\bep0= \rho+\tau+(d')\).
But since \(|\alp0|+|\alp1| = |\bep0|+|\bep1|\), we see that \(d=d'\).
Finally, if \(d \leq -2\) then \(\rho_1+\sigma_1 +d \leq \rho_2+\sigma_2\), and so \(\alp0\) is not a strict partition; thus we have \(d \geq -1\).
\end{proof}

Let \(\rho,\sigma,\tau,d\) be as in \Cref{lemma:reparametrise_cores},
and set \(r = \rho_1\), \(s = \sigma_1\), 
\(t = \tau_1\).
If \(s=t\) (i.e.~if \(\sigma=\tau\)) then \((\alp0,\alp1) = (\bep0,\bep1)\), a contradiction.
So, without loss of generality, suppose \(s < t\).

Observe that \((\alp0,\alp1)\), \((\bep0,\bep1)\) are of the required form if \(d=0\), if \(d=-1\) and \(r=1\), or if \(d=1\) and \(r=0\).
Meanwhile if \(d=-1\) and \(r=0\) then \(\alp0\) and \(\bep0\) are not \stps, a contradiction.
It remains to suppose \(d\geq 2\), or \(d=1\) and \(r \geq 1\), or \(d=-1\) and \(r \geq 2\), and reach a contradiction.
Note that in any of these cases \(r+d \geq 1\).

Let \(\al = \corandquot{\emptyset}{\alp0}{\alp1}\) and \(\be = \corandquot{\emptyset}{\bep0}{\bep1}\).
We aim to show that (in most cases) \(\al^{-0}\) and \(\be^{-0}\) are distinct, strictly smaller than \(\al\) and \(\be\), and not of the form described in \Cref{prop:only_if}, contradicting the inductive hypothesis.
We first describe \(\al\) and \(\be\) more explicitly.

\begin{lemma}
\label{lemma:describe_al_and_be}
In the abacus display for \(\al\):
\begin{enumerate}[(i)]
    \item the largest runner is runner \(-1\);
    \item in rows \(-t\) and \(t\), there is a bead on runner \(-1\);
    \item between rows \(-t\) and \(t\) (inclusive), there is alternately a bead on runner \(-1\) and runner \(1\);
    \item in rows \(i > t\), there are no beads on runners \(-1\) and \(1\).
\end{enumerate}
The same statements with \(s\) in place of \(t\) hold for \(\be\).
The parts of \(\al\) and \(\be\) are given by
\begin{align*}
    \al &= 3(\rho+\sigma+(d)) \sqcup
    \begin{cases}
        (3t-1, 3t-2, 3t-7, 3t-8, \ldots, 5,4) & \text{ if \(t \equiv 0 \ppmod{2}\),} \\
        (3t-1, 3t-2, 3t-7, 3t-8, \ldots, 2,1) & \text{ if \(t \equiv 1 \ppmod{2}\),}
    \end{cases} \\
    \be &= 3(\rho+\tau+(d)) \sqcup
    \begin{cases}
        (3s-1, 3s-2, 3s-7, 3s-8, \ldots, 5,4) & \text{ if \(s \equiv 0 \ppmod{2}\),} \\
        (3s-1, 3s-2, 3s-7, 3s-8, \ldots, 2,1) & \text{ if \(s \equiv 1 \ppmod{2}\).}
    \end{cases}
\end{align*}
In particular,
\begin{align*}
    \al_1 &= \max\set{3t-1, 3r+3s+3d}, & \be_1 &= 3r+3t+3d.
\end{align*}
\end{lemma}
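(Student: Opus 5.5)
The plan is to read everything off the construction of $\corandquot{\emptyset}{\alp0}{\alp1}$ in \cref{subsec:abacus}. For the empty core we have $\delt\al=0$, so by definition the largest runner is runner $-1$; this is (i). The \abd of the \tbc $\emptyset$ has every negative position occupied and every positive position empty. To place $\alp1=\tau=\twoc{t}$, we slide beads down runner $-1$ so that the $i$th lowest bead has exactly $\tau_i$ gaps above it. We make the matching moves on runner $1$, keeping the rule that position $m$ is occupied \iff $-m$ is empty. Runner $0$ simply carries $3\alp0=3(\rho+\si+(d))$ in its positive half, which accounts for the parts divisible by $3$.

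The core step is to determine runner $-1$ explicitly. Its positions are $3j-1$ in row $j$ for all $j\in\Z$. Before sliding, the beads occupy exactly the rows $j\leq 0$. Sliding the $i$th lowest bead down by $\tau_i=t+1-i$ places, for $i=1,\dots,t$, gives the following.
\begin{itemize}
\item Beads lie in rows $t,t-2,\dots,-t+2$, together with all rows $\leq -t$.
\item Gaps lie in rows $t-1,t-3,\dots,-t+1$, and in all rows $>t$.
\end{itemize}
I will verify this by checking that the $i$th lowest bead sits in row $t+2-2i$ and has exactly $t+1-i$ gaps above it.

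The involution $m\leftrightarrow -m$ sends position $3j-1$ on runner $-1$ to position $-3j+1=3(-j)+1$ on runner $1$, i.e.\ row $-j$ of runner $1$. So runner $1$ has a bead in row $k$ exactly when runner $-1$ has a gap in row $-k$. Within $-t\leq k\leq t$, this means runner $1$ is occupied precisely in the rows of parity opposite to $t$. That gives the alternation in (iii) and the beads on runner $-1$ in rows $\pm t$ claimed in (ii). For $k>t$, runner $-1$ has a gap by the description above. Runner $1$ is also empty there, since runner $-1$ is occupied in every row $\leq -t$; this gives (iv).

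The remaining task is to read off the parts. Positive beads on runner $-1$ in rows $t,t-2,\dots$ give the parts $3t-1,3t-7,\dots$. Positive beads on runner $1$ in rows $t-1,t-3,\dots$ give $3t-2,3t-8,\dots$. Both sequences stop at row $1$, which is where the parity split arises:
\begin{itemize}
\item for $t$ odd the sequence ends with $2,1$;
\item for $t$ even it ends with $5,4$ (rows $2$ and $1$).
\end{itemize}
Adding $3(\rho+\si+(d))$ gives the displayed formula for $\al$. The same argument with $\si$, $s$ and $\rho+\tau+(d)$ gives $\be$.

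Finally we find the largest parts. We have $\al_1=\max\{3t-1,\,3(r+s+d)\}$, using $(\rho+\si+(d))_1=r+s+d$, together with the empty-case convention when $t=0$. For $\be$ we need $\be_1=3(r+t+d)$, i.e.\ that this beats $3s-1$. This holds because $s<t$ and $r+d\geq 1$, so $3(r+t+d)>3s-1$. I expect the only genuine obstacle to be bookkeeping: getting the row-parity conventions right, and handling the degenerate cases $t=0$ or $s=0$, where the non-multiple-of-$3$ part is empty.
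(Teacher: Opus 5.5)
Your proposal is correct and is essentially the paper's argument: the paper writes down the candidate display and checks that it is the display for $\al$. That candidate has runner $-1$ occupied in rows $-t,-t+2,\dots,t$ and in all rows above $-t$, with runner $1$ forced by symmetry; sliding beads up gives the empty core, and runner $-1$ then encodes $\twoc{t}$. You instead generate the same display forwards from the definition of $\corandquot{\emptyset}{\alp0}{\alp1}$, and you get (i) directly from $\delt\al=0$.

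One small bookkeeping slip: the positive half of runner $1$ begins at row $0$ (position $1$), so for odd $t$ the runner-$1$ beads stop at row $0$ rather than row $1$. The endings $2,1$ and $5,4$ that you state are nonetheless correct.
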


\newcommand{\absize}{1.2}
\makeatletter
\begin{figure}
\begin{subfigure}{0.32\linewidth}
\centering
\begin{tikzpicture}[bend angle=0, scale=\abas*\absize,baseline={([yshift=-.8ex]current bounding box.center)}]
\@bacus
vvv,%
bbb,
bbb,
bbb,
bnb,
bbb,
bbn,
nbb,
bbn,
nnb,
bxn,
nbb,
bnn,
nnb,
bnn,
nnn,
nbn,
nnn,
nnn,
nnn,
vvv.%
\coordinate[at={(-\abah*0.8,-\abav*9.5)}] (lhsz);
\node[ablhs, at={($(lhsz)+(0,-\abav*4)$)}] {\(t\)};
\node[ablhs, at={($(lhsz)+(0,\abav*4)$)}] {\(-t\)};
\node[abbelow, at={(\abah,-\abav*21)}] {\(\al\)};
\end{tikzpicture}
\quad\;
\begin{tikzpicture}[bend angle=0, scale=\abas*\absize,baseline={([yshift=-.8ex]current bounding box.center)}]
\@bacus
vvv,%
bbb,
bnb,
bbb,
bbb,
bbb,
bbb,
bnb,
bnn,
nnb,
bxn,
nbb,
bbn,
nbn,
nnn,
nnn,
nnn,
nnn,
nbn,
nnn,
vvv.%
\coordinate[at={(-\abah*0.8,-\abav*9.5)}] (lhsz);
\node[ablhs, at={($(lhsz)+(0,-\abav*2)$)}] {\(s\)};
\node[ablhs, at={($(lhsz)+(0,\abav*2)$)}] {\(-s\)};
\node[abbelow, at={(\abah,-\abav*21)}] {\(\be\)};
\end{tikzpicture}
\caption{\(r=1\), \(s=2\), \(t=4\), \(d=3\)}
\label{subfig:long_part_in_be-0}
\end{subfigure}
\begin{subfigure}{0.32\linewidth}
\centering
\begin{tikzpicture}[bend angle=0, scale=\abas*\absize,baseline={([yshift=-.8ex]current bounding box.center)}]
\@bacus
vvv,%
bbb,
bbb,
bbb,
bnb,
bnb,
bbn,
nbb,
bbn,
nnb,
bxn,
nbb,
bnn,
nbb,
bnn,
nbn,
nbn,
nnn,
nnn,
nnn,
vvv.%
\coordinate[at={(-\abah*0.8,-\abav*9.5)}] (lhsz);
\node[ablhs, at={($(lhsz)+(0,-\abav*4)$)}] {\(t\)};
\node[ablhs, at={($(lhsz)+(0,\abav*4)$)}] {\(-t\)};
\node[abbelow, at={(\abah,-\abav*21)}] {\(\al\)};
\end{tikzpicture}
\quad\;
\begin{tikzpicture}[bend angle=0, scale=\abas*\absize,baseline={([yshift=-.8ex]current bounding box.center)}]
\@bacus
vvv,%
bbb,
bbb,
bnb,
bnb,
bbb,
bbb,
bnn,
nbb,
bnn,
nxb,
bbn,
nnb,
bbn,
nnn,
nnn,
nbn,
nbn,
nnn,
nnn,
vvv.%
\coordinate[at={(-\abah*0.8,-\abav*9.5)}] (lhsz);
\node[ablhs, at={($(lhsz)+(0,-\abav*3)$)}] {\(s\)};
\node[ablhs, at={($(lhsz)+(0,\abav*3)$)}] {\(-s\)};
\node[abbelow, at={(\abah,-\abav*21)}] {\(\be\)};
\end{tikzpicture}
\caption{\(r=4\), \(s=3\), \(t=4\), \(d=-1\)}
\label{subfig:repeated_part_in_be-0}
\end{subfigure}
\begin{subfigure}{0.32\linewidth}
\centering
\begin{tikzpicture}[bend angle=0, scale=\abas*\absize,baseline={([yshift=-.8ex]current bounding box.center)}]
\@bacus
vvv,%
bbb,
bbb,
bbb,
bbb,
bbb,
bbb,
bbb,
bnn,
nnb,
bxn,
nbb,
bbn,
nnn,
nnn,
nnn,
nnn,
nnn,
nnn,
nnn,
vvv.%
\coordinate[at={(-\abah*0.8,-\abav*9.5)}] (lhsz);
\node[ablhs, at={($(lhsz)+(0,-\abav*2)$)}] {\(t\)};
\node[ablhs, at={($(lhsz)+(0,\abav*2)$)}] {\(-t\)};
\node[abbelow, at={(\abah,-\abav*21)}] {\(\al\)};
\end{tikzpicture}
\quad\;
\begin{tikzpicture}[bend angle=0, scale=\abas*\absize,baseline={([yshift=-.8ex]current bounding box.center)}]
\@bacus
vvv,%
bbb,
bbb,
bbb,
bbb,
bbb,
bbb,
bnb,
bnb,
bbn,
nxb,
bnn,
nbn,
nbn,
nnn,
nnn,
nnn,
nnn,
nnn,
nnn,
vvv.%
\coordinate[at={(-\abah*0.8,-\abav*9.5)}] (lhsz);
\node[ablhs, at={($(lhsz)+(0,-\abav*1)$)}] {\(s\)};
\node[ablhs, at={($(lhsz)+(0,\abav*1)$)}] {\(-s\)};
\node[abbelow, at={(\abah,-\abav*21)}] {\(\be\)};
\end{tikzpicture}
\caption{\(r=2\), \(s=1\), \(t=2\), \(d=-1\)}
\label{subfig:count0nodes}
\end{subfigure}

    \caption{Examples of \abds for \(\al = \corandquot{\emptyset}{\alp0}{\alp1}\) and \(\be = \corandquot{\emptyset}{\bep0}{\bep1}\) considered in \Cref{subsec:neither}.
    The indices of some rows are recorded on the left-hand side of each abacus.}
    
    \label{fig:al_and_be_in_6.3.4}

\makeatother
\end{figure}

\begin{proof}
The parts of \(\al\) and \(\be\) divisible by \(3\) are given by \(3\alp0\) and \(3\bep0\), as is the case for any strict partition. It remains to consider the parts not divisible by \(3\) and the positions of the beads on runners~\(-1\) and~\(1\).

Construct an \abd as follows (which we will deduce is the \abd for \(\al\); several examples of this construction are depicted in \Cref{fig:al_and_be_in_6.3.4}).
Place beads on runner \(-1\) in rows \(-t, -t+2, \ldots, t-2, t\) (and all rows before row \(-t\)).
The symmetry of runners~\(-1\) and~\(1\) then determines that runner~\(1\) has beads in rows \(-t+1, -t+3, \ldots, t-3, t-1\) (and all rows strictly before row \(-t\)), which, between rows~\(-t\) and~\(t\), are precisely the rows that runner~\(-1\) has a gap.
On these runners, sliding all the beads up as far as possible yields no beads in positive positions (as can be seen on runner~\(-1\) by matching the bead in row \(i > 0 \) with the gap in row \(-i+1\), or on runner~\(1\) by matching the bead in row \(i\geq0\) with the gap in row \(-i-1\)).
Thus the \abd has empty \tbc, and so the largest runner is runner~\(-1\).
Then the first component of the \tbq is \(\twoc{t}\) (since runner \(-1\) alternates between gaps and beads, with exactly \(t\) beads contributing nonzero parts).
Thus (after adding \(3\alp0\) appropriately) this is the \abd for \(\al\).
This \abd fulfils the description in the statement of the lemma, and the parts of \(\al\) not divisible by \(3\) can then be read off the abacus.

The same reasoning holds for \(\be\) with \(s\) in place of \(t\).
The value of \(\be_1\) follows using \(s < t\) and \(r+d \geq 1\) to deduce \(3s-1 < 3r+3t+3d\).
\end{proof}

\begin{lemma}
\label{lemma:al0_and_be0}
The largest part of \(\be^{-0}\) has size \(3r+3t+3d-1\), and satisfies \((\al^{-0})_1 < (\be^{-0})_1 < \be_1\).
In particular, \(\be^{-0}\) is distinct from \(\be\) and from \(\al^{-0}\).
\end{lemma}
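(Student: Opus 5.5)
The approach is to compute everything on the \abds described in \Cref{lemma:describe_al_and_be}, using \Cref{lemma:node_removal_bead_move_correspondence}: the \abd for \(\al^{-0}\) (respectively \(\be^{-0}\)) is obtained by sliding every bead as far to the left as possible. Concretely, a bead at a positive position \(3k\) on runner \(0\) moves to position \(3k-1\) on runner \(-1\) exactly when that position is empty. A bead at a positive position \(3k+1\) on runner \(1\) moves to \(3k\) on runner \(0\) when that position is empty, and a bead at position \(1\) moves to \(-1\). The negative positions move correspondingly. In every case each positive bead moves to a position at most its own, so no part of the resulting \stp exceeds the corresponding original part.

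\textbf{The largest part of \(\be^{-0}\).} By \Cref{lemma:describe_al_and_be}, the largest part of \(\be\) is \(3(r+t+d)\), sitting on runner \(0\) in row \(r+t+d\). This is because \(3s-1<3r+3t+3d\), using \(s<t\) and \(r+d\ge1\).

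Since \(r+t+d>s\), part (iv) of that lemma says that row \(r+t+d\) has no bead on runner \(-1\). Hence this bead slides left to position \(3(r+t+d)-1\). Every other positive bead of \(\be\) lies at a position at most \(3(r+t+d)-1\), and beads only move leftwards, so nothing larger appears.

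Therefore \((\be^{-0})_1=3r+3t+3d-1<3r+3t+3d=\be_1\). In particular \(\be^{-0}\ne\be\).

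\textbf{Bounding \(\al^{-0}\).} Since removing nodes cannot increase parts, \((\al^{-0})_1\le\al_1=\max\set{3t-1,\,3r+3s+3d}\), again by \Cref{lemma:describe_al_and_be}. Both terms are smaller than \(3r+3t+3d-1\):
\begin{itemize}
\item \(3t-1<3r+3t+3d-1\), because \(r+d\ge1\);
\item \(3r+3s+3d\le 3r+3t+3d-3<3r+3t+3d-1\), because \(s<t\).
\end{itemize}
Hence \((\al^{-0})_1<(\be^{-0})_1\), and so \(\al^{-0}\ne\be^{-0}\).

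\textbf{Where care is needed.} The only step requiring attention is confirming that the top bead of \(\be\) genuinely moves, rather than being blocked by a bead on runner \(-1\) in the same row. This is exactly where \(r+d\ge1\), which gives row \(r+t+d>t>s\), and part (iv) of \Cref{lemma:describe_al_and_be} are used. The case \(d=-1\), \(r\ge2\) is included, since there too \(r+d\ge1\).
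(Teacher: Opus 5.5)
Your proof is correct and is essentially the paper's argument. The top part \(3(r+t+d)\) of \(\be\) loses its single \(0\)-node because \(3(r+t+d)-1\) is not a part: you read this off the empty runner-\(-1\) position in row \(r+t+d>s\), while the paper notes that the largest part \(\equiv -1 \ppmod 3\) is \(3s-1\). Then \((\al^{-0})_1\le\al_1=\max\{3t-1,\,3r+3s+3d\}\) is compared using \(r+d\ge1\) and \(s<t\), and your remark that beads only move leftwards makes explicit why no other part of \(\be^{-0}\) can exceed \(3r+3t+3d-1\), which the paper leaves implicit.
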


\begin{proof}
The largest part of \(\be\), of size \(\be_1 = 3r+3t+3d\), is a part with a single \(0\)-node at its end.
This node is removable since the largest part of \(\be\) congruent to \(-1 \ppmod{3}\) is \(3s-1\) which is strictly less than \(3r+3t+3d-1\) (using \(s < t\) and \(r+d \geq 1\)).
This gives \((\be^{-0})_1 = 3r+3t+3d-1 < \be_1\).
This value satisfies \((\be^{-0})_1 > 3t-1\) (using \(r+d \geq 1\)) and \((\be^{-0})_1 > 3r+3s+3d\) (using \(t>s\)), and so \((\be^{-0})_1 > \al_1 \geq (\al^{-0})_1\).
\end{proof}

\needspace{3em}
\begin{lemma}\label{d2d1r1}
Suppose either:
\begin{enumerate}[(a)]
    \item \(d \geq 2\); or
    \item \(d=1\) and \(r \geq 1\); or
    \item \(d=-1\) and \(r \geq \max\{2, 4-(t-s)\}\).
\end{enumerate}
Then \((\be^{-0})^{(1)}\) is not a \tcp.
\end{lemma}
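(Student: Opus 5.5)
The plan is to compute the abacus display of \(\be^{-0}\) explicitly, starting from the description of \(\be\) in \Cref{lemma:describe_al_and_be} and applying \Cref{lemma:node_removal_bead_move_correspondence}, which says that every bead is pushed as far left as possible. Then read off its \tbc and the component \((\be^{-0})^{(1)}\), and exhibit a feature incompatible with a \tcp.

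First, recall what a \tcp looks like on a single runner read as a partition abacus. A partition is a \tcp \iff that runner has no two consecutive gaps lying above its lowest bead. Equivalently, reading upward from the lowest bead, beads and gaps alternate until the runner becomes solid. So it suffices to find two adjacent gaps above the lowest bead on the largest runner of \(\be^{-0}\).

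Second, I do the left-shift row by row. In \(\be\), rows \(i>s\) contain beads only on runner \(0\), namely in rows given by the parts of \(\rho+\tau+(d)\). Each such bead moves to runner \(-1\) in the same row. In rows \(-s\) to \(s\), the pattern on runners \(-1\) and \(1\) alternates as in \Cref{lemma:describe_al_and_be}(iii), and each bead on runner \(1\) moves into runner \(0\) whenever the position to its left is free. Rows above \(-s\) are full and do not change. Doing this gives the positions of all beads of \(\be^{-0}\). I then count the beads in the positive halves to find \(\delt{\be^{-0}}\), and hence the largest runner. I expect this to be runner \(-1\): in the high rows it gains one bead for each part of \(\rho+\tau+(d)\) exceeding \(s\), and \(\delt{}\) only decreases. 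Taking runner \(-1\) as the largest runner is consistent with \Cref{lemma:al0_and_be0}, since the largest part \(3(r+t+d)-1\) lies on runner \(-1\) in row \(r+t+d\).

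Third, I locate the double gap. The beads of runner \(-1\) in rows \(>s\) are exactly at the rows \((\rho+\tau+(d))_k\), apart from possible edge effects near row \(s\). These rows are \(r+t+d,\ r+t-2,\ r+t-4,\dots\).
\begin{itemize}
\item In case (a), \(d\ge2\): rows \(r+t+d-1\) and \(r+t+d-2\ge r+t\) are both empty on runner \(-1\). Both lie above the lowest bead and below the top bead, and they exceed \(s\).
\item In case (b), \(d=1\) and \(r\ge1\): the rows are \(r+t+1\) and then \(r+t-2\), so rows \(r+t\) and \(r+t-1\) are both gaps. Here \(r\ge1\) is what guarantees that \(r+t-2\) is a genuine further bead, so that these two gaps sit above the lowest bead.
\item In case (c), \(d=-1\): the top parts are \(r+t-1\) and \(r+t-2\). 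This makes two adjacent beads, and the gaps must then be found lower down, where the alternating pattern from \(\rho+\tau\) (step \(2\)) meets the alternating region of rows \(\le s\) coming from \(\si\). The condition \(r\ge\max\{2,4-(t-s)\}\) should be exactly what ensures that the junction near row \(s\) produces two consecutive gaps, or else a bead followed by a double gap further down. I would check this against \Cref{fig:al_and_be_in_6.3.4}(b),(c).
\end{itemize}

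The main obstacle is case (c), together with the boundary rows near \(\pm s\) in all cases. There the bead moved from runner \(1\) to runner \(0\) interacts with runner-\(0\) gaps at negative positions, and parity matters (whether \(r+t\) and \(s\) have the same parity). I would first try splitting by the parity of \(r+t-s\), tracking rows \(s-1\) through \(s+2\) explicitly in each subcase.
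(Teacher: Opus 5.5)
Your cases (a) and (b) follow the paper's argument: the lowest bead on runner $-1$ of $\be^{-0}$ is in row $r+t+d$, the next is in row $r+t-2$, and so at least two empty rows separate them. (When $r=0$ the next bead is in row $t-1$ instead; you overlooked this, but it does no harm when $d\ge2$.)

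The genuine gap is case (c), and it comes from your opening characterisation of \tcps. Having no two consecutive gaps above the lowest bead is necessary for a \tcp but not sufficient. For example, the runner for $(1,1)$ reads solid, gap, bead, bead. So that condition is not equivalent to the alternation criterion you state next. A \tcp also fails when two adjacent beads have a gap somewhere above them, i.e.\ when there is a repeated non-zero part.

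In case (c) only this second failure occurs, and a double gap never exists:
\begin{enumerate}
\item After the left shift, every row from $0$ to $s$ of runner $-1$ is occupied. In the alternating region each row supplies a bead to runner $-1$, either directly, from runner $0$, or by a two-step move from runner $1$.
\item No part congruent to $1$ modulo $3$ survives, so the negative half of runner $-1$ is solid.
\item In rows $i>s$ the beads sit at the parts of $\rho+\tau+(-1)=(r+t-1,r+t-2,r+t-4,\dots)$, whose gaps are isolated.
\end{enumerate}
For instance, take the configuration of \Cref{fig:al_and_be_in_6.3.4}(b), with $r=4$, $s=3$, $t=4$, $d=-1$. Then $\be=(21,18,12,8,7,6,2,1)$ and $\be^{-0}=(20,17,11,8,6,5,2)$. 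Runner $-1$ has its only gap in row $5$, and $(\be^{-0})^{(1)}=(1,1)$. So no parity split near row $s$ will produce the double gap you are looking for.

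The missing idea is that the two adjacent beads in rows $r+t-1$ and $r+t-2$, which you already found, are themselves the obstruction. All you need is that row $r+t-3$ of runner $-1$ is empty.
\begin{enumerate}
\item Since $\alp{0}=\rho+\si+(-1)$ is strict and $r\ge2$, we get $s\ge1$, hence $t\ge2$.
\item So after $r+t-1$ and $r+t-2$, every part of $\bep{0}$ is at most $r+t-4$.
\item Every bead of $\be$ on runners $\pm1$ in a positive row lies in a row at most $s$, and $s<r+t-3$ is exactly the hypothesis $r\ge4-(t-s)$.
\end{enumerate}
Hence the two lowest beads on runner $-1$ have the same positive number of gaps above them. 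So $(\be^{-0})^{(1)}$ has a repeated non-zero part and is not a \tcp.

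Separately, your remark that the rows above row $-s$ are full and unchanged is wrong. Runner $0$ has gaps in rows $-\bep{0}_k$, and runner-$1$ beads move into them. This is immaterial once you argue only with positive rows.
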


\begin{proof}
Consider the \abd for \(\be^{-0}\).
The largest runner 
is runner \(-1\) (because this is the case for \(\be\) by \Cref{lemma:describe_al_and_be}, and moving beads left can only decrease the quantity \(\de\) defined in \Cref{subsec:abacus}).

[(a) and (b)]
From the value of \((\be^{-0})_1\) computed in \Cref{lemma:al0_and_be0}, the lowest bead on runner~\(-1\) is in row \(r+t+d\).
The second-lowest bead on runner \(-1\) corresponds to the second-largest part of \(\be^{-0}\) congruent to \(-1 \ppmod{3}\).
This is either the largest part of \(\be\) that is already congruent to \(-1 \ppmod{3}\), namely $3s-1$, or is obtained by subtracting $1$ from the second-largest part of \(\be\) congruent to \(0 \ppmod{3}\), which is $3\max\set{r-1,0} + 3(t-1)$.
The assumption that $s<t$ implies that the second of these integers is the larger, so
the second-lowest bead on runner $-1$ of the abacus is in row \(\max\set{r-1,0}+t-1\).

The number of empty rows between these beads, then, is
\[
    (r+t+d) - (\max\set{r-1,0}+t-1) - 1
    = \begin{cases}
        d+1 & \text{ if \(r\geq 1\),} \\
        d & \text{ if \(r=0\).}
    \end{cases}
\]
Thus the largest part of \((\be^{-0})^{(1)}\) is at least \(2\) greater than the second-largest part provided \(d \geq 2\), or \(d=1\) and \(r \geq 1\).
An example of \abds for \(\al\) and \(\be\) satisfying these conditions is depicted in \Cref{subfig:long_part_in_be-0}.

[(c)]
Suppose \(d=-1\) and \(r \geq \max\{2, 4-(t-s)\}\)
(an example of \abds for \(\al\) and \(\be\) satisfying these conditions is depicted in \Cref{subfig:repeated_part_in_be-0}).
The fact that \(\alp0 = \rho+\sigma+(-1)\) is a \stp gives \(s \geq 1\), and therefore \(t \geq 2\).
The lowest bead on runner \(-1\) in an \abd for \(\be^{-0}\) is in row \(r+t+d = r+t-1\) as before.
We claim that there is also a bead on runner \(-1\) in row \(r+t-2\), but not in row \(r+t-3\).
Indeed, the bounds \(r,t \geq 2\) guarantee that the next lowest beads on runner \(0\) in an \abd for \(\be\) are in rows \(r+t-2\) and \(r+t-4\), while the lowest bead on runners \(-1\) or \(1\) occurs in row \(s\) (using \Cref{lemma:describe_al_and_be}), which is strictly above row \(r+t-3\) by the assumption \(r \geq 4-(t-s)\).
Two adjacent beads with a gap before them implies \(\be^{-0}\) has a repeated part.
\end{proof}

If one of the hypotheses of \cref{d2d1r1} is met, then \(\al^{-0}\) and \(\be^{-0}\) are not of the form described in \Cref{prop:only_if}, and we obtain our contradiction with the inductive hypothesis. The only case remaining is \(d=-1\) and \(4-(t-s) > r \geq 2\). Since we assume $t>s$, this can happen only if \(d=-1\), \(t=s+1\) and \(r=2\) (an example of \abds for \(\al\) and \(\be\) satisfying these conditions is depicted in \Cref{subfig:count0nodes}).
We find a contradiction in this last case by counting the \esprms0 of $\al$ and $\be$.

\begin{lemma}
\label{d-1_t=s+1_r=2}
Suppose \(d=-1\), \(t=s+1\) and \(r=2\).
Then the number of \esprms0 in \(\al\) is \(t\) and in \(\be\) is \(t+1\).
\end{lemma}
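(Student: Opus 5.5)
The plan is to count the \esprms0 directly on the abacus. The explicit description of $\al$ and $\be$ in \cref{lemma:describe_al_and_be} makes this a finite, row-by-row bookkeeping exercise.

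First I will make the data explicit. With $d=-1$, $r=2$ and $t=s+1$ we have $\rho=\twoc2=(2,1)$, $\si=\twoc s$ and $\tau=\twoc{s+1}$. Hence
\[
\alp0=(s+1,s,s-2,s-3,\dots,1),\qquad \bep0=(s+2,s+1,s-1,s-2,\dots,1).
\]
So on runner $0$, $\al$ has beads in all rows $1,\dots,s+1$ except row $s-1$, and $\be$ has beads in all rows $1,\dots,s+2$ except row $s$. For small $s$ some of these rows may be absent, and $\alp0$ being strict forces $s\gs1$.

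By \cref{lemma:describe_al_and_be}, in positive rows runners $-1$ and $1$ of $\al$ alternate as follows. Runner $-1$ has beads in rows $t,t-2,\dots$ and runner $1$ has beads in rows $t-1,t-3,\dots$, down to rows $1$ or $2$, with nothing below row $t$. The same holds for $\be$ with $s$ in place of $t$.

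Next I will translate removability of a $0$-node into a local abacus condition, using \cref{lemma:node_removal_bead_move_correspondence}. Removing a $0$-node moves a bead at a positive position $a$ with $a\equiv0$ or $1\pmod3$ one place left into an empty position; the mirrored bead in the negative half moves simultaneously, so it need not be counted separately. The special case is a bead at position $1$, which moves to $-1$ and is removable precisely when $1\in\al$.

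This gives the following rule. In each positive row, a bead on runner $0$ whose left neighbour on runner $-1$ is empty contributes one \esprm0, and a bead on runner $1$ whose left neighbour on runner $0$ is empty contributes one. In row $0$, a bead at position $1$ contributes one.

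I will check that this rule matches the residue definition. A part $a\equiv0$ has its last node in a column of residue $0$, removable unless $a-1$ is a part. A part $a\equiv1$ with $a\gs4$ ends in a residue-$0$ column, removable unless $a-1$ is a part. The part $1$ is always removable.

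Then I will tally row by row for $\al$ and for $\be$, splitting rows into three ranges. The first range is the rows where runners $-1$ and $1$ alternate, that is rows $1$ to $t$ for $\al$ and rows $1$ to $s$ for $\be$. The second is the one or two rows where runner $0$ has its gap, namely row $s-1$ or row $s$. The third is the rows above the last bead on runners $\pm1$, where only runner-$0$ beads sit and each one is removable. For $\be$ these are rows $s+1$ and $s+2$.

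In the alternating range, a row with a bead on runner $-1$ and a bead on runner $0$ contributes nothing from runner $0$. A row with a bead on runner $1$ contributes one (from runner $0$, whose left neighbour on runner $-1$ is empty), unless runner $0$ is empty there. In that case the bead on runner $1$ itself becomes removable, so the contribution is still one, except at the parity-sensitive gap row. Summing should give exactly $t$ for $\al$ and $t+1$ for $\be$.

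The main obstacle is getting the parity and boundary rows right. The gap row on runner $0$ (row $s-1$ for $\al$, row $s$ for $\be$) falls either in a row where runner $-1$ carries a bead or in one where runner $1$ does, depending on the parity of $s$ relative to $t$ for $\al$ and to $s$ for $\be$. Row $0$ and the part $1$ also behave differently according to whether $t$ (or $s$) is odd, since the alternating pattern then ends with $1$ or with $4$.

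I will therefore do the count separately for $s$ even and $s$ odd, and check the very small cases $s=1,2$ against the example in \Cref{subfig:count0nodes}, where $s=1$ and $t=2$. Once the answer is $t$ versus $t+1$ in every case, the lemma follows, and \cref{prop:restriction_gives_prop_pair} then gives the desired contradiction.
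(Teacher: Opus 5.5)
Your strategy, a row-by-row count on the abacus using \cref{lemma:describe_al_and_be}, is the same as the paper's. However, your local rule for counting $0$-removable nodes is wrong, and with it the totals are not $t$ and $t+1$. A node is $0$-removable if it can be removed \emph{possibly together with other $0$-nodes}. So the number of $0$-removable nodes is $|\al|-|\al^{-0}|$, which by \cref{lemma:node_removal_bead_move_correspondence} is the total leftward displacement of the beads when each positive row is packed to the left (plus one if $1\in\al$). It is not the number of beads that can take a single step.

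Your check against the residue definition only asks whether the \emph{last} node of a part can be removed \emph{on its own}. This misses two kinds of node:
\begin{itemize}
\item the penultimate node of a part $a\equiv1\pmod 3$, since column $a-1\equiv0$ also has residue $0$;
\item the last node of such a part when $a-1\in\al$ but $a-2\notin\al$, which is removable together with the last node of $a-1$.
\end{itemize}
On the abacus, a row whose runners $-1,0,1$ read (gap, gap, bead) contributes $2$, not $1$, and so does (gap, bead, bead). The paper's own example $(7,5,4,1)$, where both $(3,4)$ and $(3,3)$ are $0$-removable, shows this.

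The correct rule gives the lemma directly. In the alternating range, every row with a bead on runner $1$ has a gap on runner $-1$, so it contributes exactly $2$ whether or not runner $0$ is occupied. Every row with a bead on runner $-1$ contributes $0$, again regardless of runner $0$. The runner-$0$ gap rows ($t-2$ for $\al$, $s$ for $\be$) always fall in runner-$(-1)$ rows, so no parity case split is needed. Summing gives $t$ for $\al$ and $2+s=t+1$ for $\be$, exactly as in the paper.

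Your rule instead gives $\lceil t/2\rceil$ and $\lceil s/2\rceil+2$. For example, with $s=1$, $t=2$ one has $\al=(6,5,4,3)$, whose $0$-removable nodes are $(3,4)$ and $(4,3)$, but your rule finds only $(4,3)$. So your claim that the sum ``should give exactly $t$ and $t+1$'' does not follow. The undercount also cannot be rescued for the final contradiction: \cref{prop:restriction_gives_prop_pair} controls only the true count, namely the largest $r$ with $e_0^{(r)}\bspn{\al}\neq0$.
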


\begin{proof}
The largest part of \(\al\) congruent to \(0 \ppmod{3}\) is \(3s+3\), corresponding to a bead on runner \(0\) in row \(t\).
Given the description of beads in runners \(-1\) and \(1\) from \Cref{lemma:describe_al_and_be}, we then see that every bead in runner \(0\) is in a row which also contains a bead in runner \(-1\) or \(1\).
Thus the \esprms0 can be counted just by considering moves from positive positions in runner \(1\) to runner \(-1\).
If \(t\) is even, there are \(t/2\) parts congruent to \(1 \ppmod{3}\) each with two \esprms0;
if \(t\) is odd, there are \((t-1)/2\) parts congruent to \(1 \ppmod{3}\) each with two \esprms0, plus the part equal to \(1\) with one \esprm0.
In either case, the total number of \esprms0 is \(t\).

The largest two parts of \(\be\) congruent to \(0 \ppmod{3}\) are \(3t+3\) and \(3t\), corresponding to beads in rows \(s+2\) and \(s+1\).
Given the description of beads in runners \(-1\) and \(1\) from \Cref{lemma:describe_al_and_be}, these two beads can each be moved left, contributing two \esprms0, but the remaining rows containing beads in runner \(0\) also contain a bead in runner \(-1\) or \(1\).
Counting the parts congruent to \(1 \ppmod{3}\) as in the previous paragraph gives \(s\) \esprms0 from the remaining rows, for a total of \(s+2 = t+1\).
\end{proof}

This completes the proof of \Cref{prop:only_if} and hence of our main theorem.

\bibliographystyle{alpha-noxc}
\bibliography{references}

\end{document}